\newtheorem{thm}{Theorem}[section]
\newtheorem{lemma}[thm]{Lemma}
\newtheorem{prop}[thm]{Proposition}
\theoremstyle{definition}
\newcommand{\pa}{\partial}
\newcommand{\N}{\mathbb{N}}
\newcommand{\R}{\mathbb{R}}
\renewcommand{\S}{\mathbb{S}}
\newcommand{\dx}{\textup{d}x}
\newcommand{\dy}{\textup{d}y}
\newcommand{\dz}{\textup{d}z}
\numberwithin{equation}{section}
\begin{document}
\title[Construction of solutions for a critical elliptic system]
{Construction of solutions for a critical elliptic system of Hamiltonian type}

\author{Yuxia Guo}
\address[Yuxia Guo]{Department of Mathematics and Sciences, Tsinghua University}
\email{yguo@tsinghua.edu.cn}

\author{Congzheng Xuanyuan}
\address[Congzheng Xuanyuan]{Department of Mathematics and Sciences, Tsinghua University}
\email{xycz23@mails.tsinghua.edu.cn}

\author{Tingfeng Yuan}
\address[Tingfeng Yuan]{Department of Mathematics and Sciences, Tsinghua University}
\email{ytf22@mails.tsinghua.edu.cn}

\begin{abstract}
We consider the following nonlinear elliptic system of Hamiltonian type with critical exponents:
\begin{equation*}
    \begin{cases}
        -\Delta u + V(|y'|,y'')\, u = |v|^{p-1}v, & \text{in } \R^N,\\
        -\Delta v + V(|y'|,y'')\, v = |u|^{q-1}u, & \text{in } \R^N, \\
    \end{cases}
\end{equation*}
where $(y', y'') \in \R^2 \times \R^{N-2}$, $V(|y'|, y'') \not\equiv 0$ is a bounded, nonnegative function on $\R_+ \times \R^{N-2}$ and $p, q > 1$ lie on the critical hyperbola:
\[
    \frac{1}{p+1} + \frac{1}{q+1} = \frac{N-2}{N}.
\]
By applying the finite-dimensional reduction method and local Pohozaev identities combined with the Green representation formula and technical analysis, we show that, under the assumptions that $N \ge 5$, $(p,q)$ lies in a certain admissible range, and $r^2 V(r, y'')$ has a stable critical point, the above problem admits infinitely many solutions whose energy can be made arbitrarily large.
\end{abstract}


\maketitle

{\textbf{Keywords:} Hamiltonian system, Infinitely many solutions, Lyapunov-Schmidt reduction, local Pohozaev identity.}

\section{Introduction}
In this paper, we will study the following nonlinear elliptic system of Hamiltonian type with critical growth:
\begin{equation}\label{Main}
    \begin{cases}
        -\Delta u + V(|y'|,y'') u = |v|^{p-1}v, \;\; &\text{in} \;\; \R^N,\\
        -\Delta v + V(|y'|,y'') v = |u|^{q-1}u, \;\; &\text{in} \;\; \R^N, \\
        (u,v) \in X : = (\dot{W}^{2,\frac{p+1}{p}} (\R^N) \cap L_V^2(\R^N)) \times (\dot{W}^{2,\frac{q+1}{q}} (\R^N) \cap L_V^2(\R^N)) ,
    \end{cases}
\end{equation}
where $(y', y'') \in \R^2 \times \R^{N-2}$,  $V(|y'|, y'')\not\equiv0$ is a bounded non-negative function in $\R_+\times \R^{N-2}$, $L^2_V(\R^N) = L^2(\R^N, V(|y'|,y'')\dy)$ and $p,q>1$ lie on the critical hyperbola:
    \begin{equation}\label{critical hyperbola}
        \dfrac{1}{p+1} + \dfrac{1}{q+1} = \dfrac{N-2}{N},	
	\end{equation}
Without loss of generality, we may assume that $p\leq \frac{N+2}{N-2}\leq q$.

Define
\[
    H(y,u,v) = -V(y)uv + \frac{|u|^{q+1}}{q+1} + \frac{|v|^{p+1}}{p+1}.
\]
Then equations in  \eqref{Main} can be rewritten in the following form:
\begin{equation*}
    \begin{cases}
        -\Delta u = H_v(y,u,v),\\[3pt]
        -\Delta v = H_u(y,u,v).
    \end{cases}
\end{equation*}
In the literature, systems of the above form are usually referred to as elliptic systems of Hamiltonian type. Such systems are also said to be strongly coupled, in the sense that $u \equiv 0$ if and only if $v \equiv 0$.
Elliptic systems of Hamiltonian type have been extensively studied in the literature. Many results concerning on the existence, nonexistence, multiplicity, symmetry, and other qualitative properties of their solutions have been obtained; see, for instance, \cite{Figueiredo2, BST, Figueiredo1,Figueiredo3,Ding, Ruf,Sirakov} and the references therein. In these works, most results are obtained by using variational methods such as the dual method, the generalized mountain pass theorem, and the generalized linking theorem. The systems considered there usually involve nonlinear terms $H(y,u,v)$ under {\it subcritical} growth conditions, namely:
\[
    \dfrac{1}{p+1} + \dfrac{1}{q+1} > \dfrac{N-2}{N}.
\]
However, when one considers the {\it critical} case, that is, when the growth exponents $p$ and $q$ lie on the critical hyperbola:
\[
\dfrac{1}{p+1} + \dfrac{1}{q+1} = \dfrac{N-2}{N},
\]
the classical variational methods cannot be applied directly, not only because of the loss of compactness in the Sobolev embedding, but also due to the strong indefiniteness of the corresponding Euler-Lagrange functional, in the sense that the functional is neither bounded from below nor from above.

\vskip8pt

The purpose of  the present paper is to apply Lyapunov-Schmidt reduction arguments and local Pohozaev identities to study the existence and the profile of the solutions for  system \eqref{Main}.

\vskip8pt

Note that when $p = q$ and $u = v>0$, system \eqref{Main} reduces to the following nonlinear elliptic problem with critical exponent:
\begin{equation}\label{single equation}
    -\Delta u + V(y)u = u^{\frac{N+2}{N-2}}, \quad u > 0, \quad u \in H^1(\R^N).
\end{equation}
Problem\eqref{single equation} is related to the well-known Brezis-Nirenberg problem on $\mathbb{S}^N$:
\begin{equation}\label{BN problem in SN}
    -\Delta_{\mathbb{S}^N} u = u^{\frac{N+2}{N-2}} + \lambda u,
    \quad u > 0 \;\; \text{ on } \;\; \mathbb{S}^N.
\end{equation}
Using stereographic projection, the problem \eqref{BN problem in SN} can be transformed into \eqref{single equation} with
\[
    V(y) = -\frac{-4\lambda - N(N-2)}{(1+|y|^2)^2}, \quad \text{and} \quad V(y) > 0 \;\;\text{ when } \;\;\lambda < -\frac{N(N-2)}{4}.
\]
The problem \eqref{BN problem in SN} has attracted a great deal of attention in recent decades and has been studied extensively, and we refer the interested reader to \cite{BandleWei, BrezisLi, BrezisPeletier, Druet, GidasSpruck} and the references therein.

\vskip8pt

    Recall that if $V \ge 0$ and $V \not\equiv 0$, then the mountain pass value for problem \eqref{single equation} is not a critical value of the corresponding functional. Therefore, all arguments based on the concentration compactness principle in \cite{Lions2, Lions1} cannot be applied to obtain the existence of solutions for \eqref{single equation}. As far as we know, the first existence result for~\eqref{single equation} is due to Benci and Cerami, see \cite{BenciCerami}. They showed that if $\|V\|_{L^{\frac{N}{2}}(\R^N)}$ is sufficiently small, then \eqref{single equation} admits a solution whose energy lies in the interval
    \[
        \left( \frac{1}{N} S^{\frac{N}{2}}, \frac{2}{N} S^{\frac{N}{2}} \right),
    \]
    where $S$ denotes the best Sobolev constant in the embedding $D^{1,2}(\R^N) \hookrightarrow L^{\frac{2N}{N-2}}(\R^N)$. Regarding multiplicity results, Chen, Wei, and Yan \cite{CWY} applied the reduction method to prove that~\eqref{single equation} has infinitely many non-radial solutions whose energy can be made arbitrarily large, provided that $N \ge 5$, $V(y)$ is radially symmetric, and $r^2V(r)$ has a local maximum point, or a local minimum point $r_0$ with $V(r_0) > 0$. Note that under the assumption that $V(y)$ is radially symmetric, this condition is also necessary for the existence of solutions, since by the following Pohozaev identity
    \begin{equation}
        \int_{\R^N} \big( V(|y|) + \tfrac{1}{2} |y| V'(|y|) \big) \, dy = 0,
    \end{equation}
    the problem \eqref{single equation} admits no solution if $r^2V(r)$ is monotone increasing or decreasing. Later, Peng, Wang, and Yan~\cite{PWY} considered a weaker symmetry condition on $V(y)$ and proved that problem \eqref{single equation} has infinitely many solutions whose energy can be made arbitrarily large.

     \vskip8pt

    To our best knowledge, very few results are available for Hamiltonian systems with critical growth. In this paper, we study the existence of infinitely many solutions for the Hamiltonian system \eqref{Main} under a weaker symmetry condition on $V(y)$. More precisely,  we consider the potential $V(y) = V(|y'|, y'')$ with $y = (y', y'') \in \R^2 \times \R^{N-2}$ and assume that $V(r, y'')$ satisfies the following condition:

    \vspace{0.5em}
    \noindent
    \textbf{(V):} Suppose that $r^2 V(r, y'')$ has a critical point $(r_0, y_0'')$ satisfying $r_0 > 0$, $V(r_0, y_0'') > 0$, and
    \[
        \deg\big( \nabla (r^2 V(r, y'')), (r_0, y_0'') \big) \ne 0.
    \]
    \vspace{0.5em}
    \noindent
    Here  is our main result.
    \begin{thm}\label{Thm1}
        Suppose that $V \in C^1(\R^N)$ is bounded, nonnegative, and satisfies condition~\textnormal{(V)}. If $N\geq 6$, $p\in(\frac{N}{N-2},\frac{N+2}{N-2})$ or $N=5$, $p\in (2,\frac{7}{3})$, and $p, q$ satisfy~\eqref{critical hyperbola}, then problem~\eqref{Main} admits infinitely many solutions whose energy can be made arbitrarily large.
    \end{thm}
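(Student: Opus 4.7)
The starting point is the ground-state bubble $(U,V)$ of the unperturbed Hamiltonian system $-\Delta U=V^p$, $-\Delta V=U^q$ on $\R^N$, which is known to exist, be unique up to translation and dilation, and satisfy an appropriate non-degeneracy property (Hulshof--Van der Vorst and subsequent refinements). For large integer $k$, I build an approximate solution by placing $k$ rescaled copies of $(U,V)$ at the symmetric points
\[
x_j=\bigl(r\cos\tfrac{2\pi(j-1)}{k},\,r\sin\tfrac{2\pi(j-1)}{k},\,y''\bigr),\qquad j=1,\dots,k,
\]
with $r$ close to $r_0$, $y''$ close to $y_0''$, and common concentration parameter $\mu=\mu_k$ chosen as a suitable negative power of $k$, so that the bubbles separate as $k\to\infty$. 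Restricting to the closed subspace $X_k\subset X$ of pairs invariant under rotation by $2\pi/k$ in the $y'$-plane collapses the translation/dilation directions of the approximate kernel to the $N$-dimensional parameter set $(r,y'',\mu)\in\R_+\times\R^{N-2}\times\R_+$. The index $k$ plays the role of the ``large-energy'' parameter: the total energy of the resulting solution is approximately $k$ times that of a single bubble and is unbounded as $k\to\infty$.

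\textbf{Lyapunov--Schmidt reduction.} Writing $u=\sum_{j=1}^k U_{x_j,\mu}+\phi$, $v=\sum_{j=1}^k V_{x_j,\mu}+\psi$ and substituting into \eqref{Main}, I would project the resulting coupled system for $(\phi,\psi)\in X_k$ orthogonally to the approximate kernel and solve the projected system by a contraction mapping argument. The quantitative inputs are: (i) resolvent estimates for the decoupled linearization in the asymmetric pair $\dot W^{2,(p+1)/p}\times\dot W^{2,(q+1)/q}$, based on the non-degeneracy of $(U,V)$; and (ii) bubble--bubble interaction estimates whose summability over the $k$ symmetric copies requires both $N\ge 5$ and the range $p>\tfrac{N+1}{N-2}$ in order to control the worst-decaying interaction term. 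The output is, for each admissible triple $(r,y'',\mu)$, a unique small perturbation $(\phi_{r,y'',\mu},\psi_{r,y'',\mu})$ with explicit $k$-dependent bounds.

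\textbf{Reducing to an algebraic system via local Pohozaev identities.} It remains to choose $(r,y'',\mu)$ so that the projection of the full equation onto the approximate kernel vanishes. Rather than differentiating a reduced energy, which is awkward for a strongly indefinite functional, I would follow the template of \cite{PWY} and derive a system of local Pohozaev identities: multiply the first equation in \eqref{Main} by $\partial_{y_i}v$ or $\langle y-x_1,\nabla v\rangle$ (and symmetrically the second equation by $\partial_{y_i}u$ or $\langle y-x_1,\nabla u\rangle$) and integrate over a ball $B_\delta(x_1)$ whose radius $\delta$ is of the order of the inter-bubble distance. After the boundary cancellations, substitution of the perturbation expansion obtained above, and Taylor expansion of $V$ near $(r_0,y_0'')$, this yields an algebraic system of the schematic form
\[
\nabla_{r,y''}\bigl(r^2V(r,y'')\bigr)+o(1)=0,\qquad \Phi_k(r,y'',\mu)+o(1)=0,
\]
as $k\to\infty$, where $\Phi_k$ encodes the $\mu$-balance between the $V$-contribution and the bubble--bubble interaction. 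The hypothesis $\deg\bigl(\nabla(r^2V),(r_0,y_0'')\bigr)\ne 0$ then solves the first sub-system by Brouwer degree, while a direct matching argument solves the $\mu$-equation; this produces a genuine solution of \eqref{Main} for every sufficiently large $k$, with energy tending to infinity.

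\textbf{Main obstacle.} The principal difficulty is the strongly indefinite character of the functional, which forces the analysis to take place simultaneously in the dual pair $\dot W^{2,(p+1)/p}$ and $\dot W^{2,(q+1)/q}$ rather than in a single Hilbert space. Establishing uniformly invertible linearized estimates on the orthogonal complement of the approximate kernel, with constants independent of $k$, requires Calder\'on--Zygmund type bounds adapted to this asymmetric setting, together with a quantitative non-degeneracy statement for the Hulshof--Van der Vorst bubble in the same norms. Once the linear theory is in place, the bubble-interaction estimates and the derivation of the reduced Pohozaev system run in parallel to the single-equation template of \cite{PWY}, but each interaction integral now splits into several pieces corresponding to the two different decay rates of the bubble components $U$ and $V$, and tracking the leading order here is precisely what forces the explicit range $p>\tfrac{N+1}{N-2}$.
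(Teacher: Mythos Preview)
Your high-level strategy matches the paper: symmetric multi-bubble ansatz, Lyapunov--Schmidt reduction to an $N$-dimensional parameter space, local Pohozaev identities to solve the reduced system, and a degree argument using condition~\textbf{(V)}. However, three technical choices differ from the paper in ways that matter.

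\emph{Function spaces for the reduction.} You propose to run the contraction in the Sobolev pair $\dot W^{2,(p+1)/p}\times\dot W^{2,(q+1)/q}$ via Calder\'on--Zygmund estimates. The paper instead works in weighted $L^\infty$ spaces (the norms $\|\cdot\|_{*,1}$, $\|\cdot\|_{*,2}$, $\|\cdot\|_{**,1}$, $\|\cdot\|_{**,2}$), with the linear theory based on the Green's function representation and pointwise bounds (Lemmas~\ref{B3}, \ref{B4}). This is not a cosmetic change: because the number $m$ of bubbles tends to infinity, Sobolev norms of the approximate solution and of the error blow up like powers of $m$, and closing the fixed-point argument uniformly in $m$ in your framework is exactly the difficulty that led Wei--Yan to introduce the weighted pointwise norms. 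Your ``main obstacle'' (asymmetric Sobolev resolvent estimates) is therefore largely sidestepped by the paper; the genuine obstacle is uniformity in $m$, and the $L^\infty$ weights are designed to absorb it.

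\emph{Cutoff of the bubbles.} The paper multiplies each bubble by a fixed cutoff $\xi$ supported near $(r_0,y_0'')$; this compensates for the slow decay of $U_{0,1}$ when $N$ is moderate and, crucially, makes the approximate solution vanish outside a fixed region $D_\rho$. You omit this device, which would leave you with extra error terms and, more importantly, with large boundary contributions in the Pohozaev step.

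\emph{Domain for the Pohozaev identities.} You integrate over $B_\delta(x_1)$ with $\delta$ of inter-bubble size; the paper integrates over the fixed domain $D_\rho$ (in the $(r,y'')$ variables) on whose boundary the truncated bubbles vanish identically. Consequently all boundary terms in the paper involve only $(\phi,\psi)$ and are controlled by the $C^1$ bound of Lemma~\ref{par_2}. In your setup the boundary of $B_\delta(x_1)$ carries non-trivial contributions from neighbouring bubbles, and extracting the clean leading order $\nabla(r^2V)(\bar r,\bar y'')=o(1)$ would require a separate (and more delicate) cancellation argument.
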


    The proof of Theorem \ref{Thm1} is mainly based on a finite-dimensional reduction argument and local Pohozaev identities. To perform the reduction procedure, it is essential to determine the location of the bubbles. In \cite{CWY}, the location of the bubbles was found by using a minimization or maximization procedure, and the bubble solutions constructed there concentrate at a local maximum or minimum point of $r^2V(r)$. However, in our case, $(r_0,y''_0)$ may be a saddle point of $r^2V(r,y'')$, so the above argument may fail. As in \cite{Rey}, it is natural to locate the bubbles by calculating the derivatives of the reduced functional. However, as shown in Lemma \ref{expansion_2}, this approach cannot be applied here because it is difficult to obtain a sufficiently good estimate for the error term. Instead, following \cite{PWY}, we use the local Pohozaev identities to derive algebraic equations that determine the location of the bubbles. To overcome the lack of parameters, as in \cite{WY}, we use the number of bubbles in the solutions as a parameter to construct infinitely many positive bubbling solutions. This method has been successfully applied to construct infinitely many positive solutions for other non-compact elliptic problems; see \cite{CWY, DLY, GLP, GPY, PWY, WY1, WY2} and the references therein.

    To proceed, we first introduce some notation and definitions. Let $N \geq 3$, and let $(U,V)$ be a positive ground state solution of (see \cite{Lions1})
    \begin{equation}\label{lane-embden}
        \begin{cases}
            -\Delta U = |V|^{p-1}V, \quad \text{in } \R^N,\\
            -\Delta V = |U|^{q-1}U, \quad \text{in } \R^N,\\
            (U,V) \in \dot{W}^{2,\frac{p+1}{p}}(\R^N) \times \dot{W}^{2,\frac{q+1}{q}}(\R^N).
        \end{cases}
    \end{equation}
    It is known that $(U,V)$ is radially symmetric and decreasing after a suitable translation (see \cite{Lions2}). Moreover, the results of Wang \cite{Wang} and Hulshof and Van der Vorst \cite{HV} show that there exists a positive ground state solution $(U_{0,1}, V_{0,1})$ such that $U_{0,1}(0) = 1$, which is unique up to translation and scaling. The family of functions $\{(U_{x,\lambda}, V_{x,\lambda})\}$ defined by
    \begin{equation}
        (U_{x,\lambda}, V_{x,\lambda}) = \big(\lambda^{\frac{N}{q+1}} U_{0,1}(\lambda(y - x)), \ \lambda^{\frac{N}{p+1}} V_{0,1}(\lambda(y - x))\big), \quad \text{for any } x \in \R^N \text{ and } \lambda > 0,
    \end{equation}
    exhausts all the positive ground state solutions of \eqref{lane-embden}.

   \vskip8pt

   We define the function spaces $L_s$ and $H_s$ by
   \[
       L_s = \left\{ (u_1, u_2)  \ \middle| \
       \begin{aligned}
       & u_1, u_2 \ \ \text{are measurable functions in} \ \ \R^N, \\
       &u_i (r \cos \theta, r \sin \theta, y'')
        = u_i \!\left(r \cos \!\left(\theta + \tfrac{2\pi j}{m}\right),\,
        r \sin \!\left(\theta + \tfrac{2\pi j}{m}\right),\, y''\right), \\[2mm]
        &u_i (y_1, -y_2, y'') = u_i (y_1, y_2, y''),
        \quad i = 1, 2,\ \ j = 1, 2, \ldots, m
       \end{aligned}
        \right\},
    \]
    and $H_s = L_s \cap X$, where $X$ is given in \eqref{Main}. We also define
    \[
        x_j = \left(\bar{r} \cos \frac{2(j-1)\pi}{m},\ \bar{r} \sin \frac{2(j-1)\pi}{m},\ \bar{y}''\right),\quad j = 1, 2, \ldots, m,
    \]
    where $\bar{y}'' \in \R^{N-2}$ and $\bar{r} \in \R_+$ will be determined later. By the weak symmetry of $V(y)$, we observe that $V(x_j) = V(\bar{r}, \bar{y}'')$ for all $j = 1, \ldots, m$.

    \vskip8pt

    Let $\delta > 0$ be a small constant such that
    \[
        r^2 V(r, y'') > 0 \;\; \text{if} \;\; |(r, y'') - (r_0, y_0'')| \leq 10\delta.
    \]
    Let $\xi(y) = \xi(|y'|, y'')$ be a smooth function satisfying $0 \leq \xi \leq 1$ and
    \[
        \xi = \begin{cases}
            1, \;\; \text{if} \;\; |(r, y'') - (r_0, y_0'')| \leq \delta, \\
            0, \;\;  \text{if} \;\; |(r, y'') - (r_0, y_0'')| \geq 2\delta.
        \end{cases}
    \]
    We denote
    \begin{align}\label{ansatzes_1}
        (Y_{\bar{r}, \bar{y}'', \lambda}, Z_{\bar{r}, \bar{y}'', \lambda})= (\xi Y_{\bar{r}, \bar{y}'', \lambda}^*,\, \xi Z_{\bar{r}, \bar{y}'', \lambda}^*),
    \end{align}
    where $Z_{\bar{r}, \bar{y}'', \lambda}^* = \sum\limits_{j=1}^{m} V_{x_j,\lambda}$, and $Y_{\bar{r}, \bar{y}'', \lambda}^*$ is defined as the solution of
    \begin{align}\label{ansatzes_2}
        -\Delta Y_{\bar{r}, \bar{y}'', \lambda}^* = (Z_{\bar{r}, \bar{y}'', \lambda}^*)^p \quad \text{in } \R^N, \quad Y_{\bar{r}, \bar{y}'', \lambda}^* \in \dot{W}^{2,\frac{p+1}{p}}(\R^N).
    \end{align}
    We will use $(Y_{\bar{r}, \bar{y}'', \lambda}(y), Z_{\bar{r}, \bar{y}'', \lambda}(y))$ as an approximation solution of \eqref{Main}. Note that here we need to apply a cut-off on $(Y_{\bar{r}, \bar{y}'', \lambda}^*,\, Z_{\bar{r}, \bar{y}'', \lambda}^*)$ because we need to deal with the slow decay of it when $N$ is not large.

    One of the difficulties in this paper is to estimate $Y_{\bar{r}, \bar{y}'', \lambda}^*$. In Appendix B, we successfully derive an estimate for $Y_{\bar{r}, \bar{y}'', \lambda}^*$ by applying the Green’s representation formula together with a careful analytical study. Define $$\varphi = Y_{\bar{r}, \bar{y}'', \lambda}^* - \sum\limits_{j=1}^m U_{x_j,\lambda}.$$
    It turns out that the function $\varphi$ plays an important role in the energy expansion, as it carries the leading term of that expansion, see Appendix C for details. To better capture the main contribution, we further decompose $\varphi$ by introducing a new function $w$ that is defined as the solution to \eqref{def-w}.

    \vspace{0.5em}

    Note that in \cite{PWY}, the approximate solution is obtained by taking the sum of the cut-offs for Talanti bubbles. If we follow the method of \cite{PWY} directly, the approximate solution will only be accurate when $p$ lies in a narrow range of $\frac{N+2}{N-2}$ (more precisely, in the interval $\frac{N+1}{N-2} \leq p < \frac{N+2}{N-2}$). As $p$ decreases, this approximate solution will become less accurate and the error term will become too large for the method to remain valid. We believe that this is a technical difficulty. Therefore, inspired by the method used in \cite{KP21} for handling systems (an approach also used in \cite{JK23} and more recently in \cite{GKPY}), we perform a projection to obtain a more accurate approximate solution $(Y_{\bar{r}, \bar{y}'', \lambda}, Z_{\bar{r}, \bar{y}'', \lambda})$. Theorem \ref{Thm1} demonstrates that with this approximate solution, we can find the solutions to system \eqref{Main} for a wider range $\frac{N}{N-2}< p < \frac{N+2}{N-2}$ when $N \geq 6$. It is worth mentioning that this paper deals only with the case $p > \frac{N}{N-2}$ since in this range the decay of $U_{0,1}(y)$ is uniform ($U_{0,1} (y) \approx 1/(1+|y|)^{N-2}$ when $|y| \to +\infty$ when $\frac{N}{N-2} < p \leq \frac{N+2}{N-2}$), which is convenient to analyze. When $p \leq \frac{N}{N-2}$, the decay of $U_{\lambda,x}(y)$ at infinity will depend on $p$, gradually changing from $\log |y|/(1+|y|)^{N-2}$ when $p = \frac{N}{N-2}$ to $1/(1+|y|)^{N-4}$ when $p =1$. We believe that our method can be extended in this range as well, which will be the subject of our future work.

    \vspace{0.5em}

    In this paper, we always assume that $m > 0$ is a large integer, $\lambda \in [L_0 m^{\frac{N-2}{N-4}},\, L_1 m^{\frac{N-2}{N-4}}]$ for some constants $L_1 > L_0 > 0$, and for a small constant $\vartheta > 0$, we let
    \begin{equation}\label{condition_ry}
        |(\bar{r}, \bar{y}'') - (r_0, y_0'')| \leq \vartheta,
    \end{equation}

    In order to prove Theorem \ref{Thm1}, we will show the following result.
    \begin{thm}\label{Thm2}
        Under the assumptions of Theorem \ref{Thm1}, there exists a positive integer $m_0 > 0$, such that for any integer $m \geq m_0$, \eqref{Main} has a solution $(u_m,v_m)$ of the form
        \[
            (u_m,v_m) = (Y_{\bar{r}_m, \bar{y}_m'', \lambda_m} + \phi_m,Z_{\bar{r}_m, \bar{y}_m'', \lambda_m} + \psi_m )\approx (\sum_{j=1}^m \xi U_{x_j, \lambda_m} + \phi_m,\sum_{j=1}^m \xi V_{x_j, \lambda_m} + \psi_m),
        \]
        where $(\phi_m,\psi_m) \in H_s$. Moreover, as $m \to +\infty$, $\lambda_m \in [L_0 m^{\frac{N-2}{N-4}}, L_1 m^{\frac{N-2}{N-4}}]$, $(\bar{r}_m, \bar{y}_m'') \to (r_0, y_0'')$, and $(\lambda_m^{-\frac{N}{q+1}} \| \phi_m \|_{L^\infty},\lambda_m^{-\frac{N}{p+1}} \| \psi_m \|_{L^\infty}) \to \bf{0}$.
    \end{thm}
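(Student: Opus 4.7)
The plan is to carry out a Lyapunov--Schmidt reduction in two stages, following the strategy indicated in the introduction: first solve the problem in the directions transverse to the approximate kernel, then adjust the finite-dimensional parameters $(\bar r, \bar y'', \lambda)$ via local Pohozaev identities rather than by differentiating a reduced functional. Throughout, I work in the $m$-fold symmetric space $H_s$, so the translational modes generated by $\pa_{x_j}U_{x_j,\lambda}$ and $\pa_{x_j}V_{x_j,\lambda}$ collapse to just $\pa_{\bar r}$ and $\pa_{\bar y''}$, and the scaling mode contributes $\pa_\lambda$; these together form the approximate kernel.

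First I would set up weighted $L^\infty$ norms, say
\[
\|\phi\|_* = \sup_{y\in\R^N}\Bigl(\sum_{j=1}^m \lambda^{\frac{N}{q+1}}(1+\lambda|y-x_j|)^{-\tau_1}\Bigr)^{-1}|\phi(y)|,
\]
and analogously $\|\psi\|_{**}$ with exponent $\frac{N}{p+1}$ and a suitable $\tau_2$, chosen so the linearized operator $L(\phi,\psi)=(-\Delta\phi+V\phi-pZ_{\bar r,\bar y'',\lambda}^{p-1}\psi,\,-\Delta\psi+V\psi-qY_{\bar r,\bar y'',\lambda}^{q-1}\phi)$ is uniformly invertible on the orthogonal complement of the approximate kernel. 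The restriction $p>\frac{N+1}{N-2}$ and $N\ge 5$ is exactly what allows the convolution-type estimates controlling $\int G(y,z)Z^{p-1}\psi$ and $\int G(y,z)Y^{q-1}\phi$ to close in these norms; this invertibility step is routine but technically heavy because one must separate the contributions near each $x_j$ and estimate the cross terms, which decay like $|x_i-x_j|^{-(N-2)}\sim(\lambda/m)^{-(N-2)}$. Contraction mapping on the nonlinear remainder, combined with sharp estimates of the error $\ell$ produced by substituting the ansatz into \eqref{Main} (the main contribution being $V\cdot Y_{\bar r,\bar y'',\lambda}$, which is $O(\lambda^{-N/(q+1)}V(x_j))$ near each $x_j$), yields a unique $(\phi,\psi)=(\phi_{\bar r,\bar y'',\lambda},\psi_{\bar r,\bar y'',\lambda})\in H_s$ with the stated decay $\lambda^{-N/(q+1)}\|\phi\|_\infty+\lambda^{-N/(p+1)}\|\psi\|_\infty\to 0$.

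Next I would reduce the problem to finding $(\bar r,\bar y'',\lambda)$. Instead of differentiating an energy (which is unavailable in workable form because the Hamiltonian functional is strongly indefinite, and even a reduced ``action'' would carry error terms too crude for our purposes, as flagged after the theorem statement), I apply the local Pohozaev identities for the system
\[
\int_{\pa B_\delta(x_1)}\!\!\!\Bigl[\pa_\nu u\,\pa_k v+\pa_\nu v\,\pa_k u-(\nabla u\cdot\nabla v)\nu_k\Bigr]+\int_{B_\delta(x_1)}\!\!\pa_k V\,uv=\text{bdry terms from }H,
\]
for $k$ corresponding to the radial variable $r$ and each component of $y''$, and the dilational Pohozaev identity for $\lambda$. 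Plugging $(u,v)=(Y+\phi,Z+\psi)$ and using the estimates from Step 1, the leading order of the $k$-th identity becomes a constant multiple of $\pa_k(r^2V(r,y''))\bigl|_{(\bar r,\bar y'')}\cdot\lambda^{-2}$, while the dilational identity produces, after the interaction terms between neighboring bubbles are computed (they scale like $(m/\lambda)^{N-2}$ and give the constant $L_0,L_1$ window for $\lambda\sim m^{(N-2)/(N-4)}$), an equation of the form $C_1V(\bar r,\bar y'')\lambda^{-2}-C_2 m^{N-2}\lambda^{-(N-2)}=o(\cdot)$. Thus the reduced system takes the form
\[
\nabla_{(r,y'')}\bigl(r^2V(r,y'')\bigr)\bigl|_{(\bar r,\bar y'')}=o(1),\qquad F(\bar r,\bar y'',\lambda,m)=o(1),
\]
where $F$ is a scalar equation monotone in $\lambda$ on the prescribed window.

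The main obstacle, as I anticipate it, is the error control in the Pohozaev identities: in the critical Hamiltonian setting the natural tail of the bubbles and the coupling between $\phi$ and $\psi$ degrade the error terms, and one must show they are genuinely of smaller order than the $\lambda^{-2}\nabla(r^2V)$ signal. Once this is achieved, the degree condition in hypothesis \textbf{(V)} together with monotonicity in $\lambda$ gives, via Brouwer degree on the compact set $\{|(\bar r,\bar y'')-(r_0,y_0'')|\le\vth\}\times[L_0 m^{(N-2)/(N-4)},L_1 m^{(N-2)/(N-4)}]$, a zero $(\bar r_m,\bar y_m'',\lambda_m)$ of the reduced map, and this yields the desired solution with the claimed asymptotics as $m\to\infty$.
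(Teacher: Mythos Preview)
Your overall strategy---weighted $L^\infty$ reduction followed by local Pohozaev identities and a degree argument on $\nabla(r^2V)$---is exactly the paper's approach, and the reduced system you land on matches. Two points of divergence are worth flagging, one cosmetic and one substantive.

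First, the paper integrates its Pohozaev identities over a single annular domain $D_\rho=\{|(r,y'')-(r_0,y_0'')|\le\rho\}$ containing \emph{all} bubbles, not over individual balls $B_\delta(x_1)$. Because the cutoff $\xi$ vanishes on $\partial D_\rho$, the boundary contributions come entirely from $(\phi,\psi)$ and are controlled by Lemmas~3.4--3.5. Integrating near a single bubble as you propose would force you to estimate boundary terms involving $U_{x_j,\lambda}$, $j\neq 1$, on $\partial B_\delta(x_1)$; doable, but noisier.

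Second, and more importantly, your assignment of identities to parameters is swapped relative to the paper. The translation Pohozaev in the directions $y_i$, $i=3,\dots,N$, gives $\partial_{y_i}V(\bar r,\bar y'')=o(1)$, not $\partial_{y_i}(r^2V)$ (though these coincide at the critical point). The \emph{scaling} identity---testing against $\langle y,\nabla u_m\rangle$, $\langle y,\nabla v_m\rangle$---is what manufactures the factor $r^2$ and yields $\partial_r(\bar r^2V(\bar r,\bar y''))=o(1)$; it does \emph{not} produce the $\lambda$ equation. For $\lambda$ the paper drops Pohozaev altogether and tests the equation directly against $\bigl(\partial_\lambda Y_{\bar r,\bar y'',\lambda},\,\partial_\lambda Z_{\bar r,\bar y'',\lambda}\bigr)$, using the energy expansion
\[
\partial_\lambda I(Y_{\bar r,\bar y'',\lambda},Z_{\bar r,\bar y'',\lambda})
= m\Bigl(-\tfrac{B_1}{\lambda^3}V(\bar r,\bar y'')+\tfrac{B_3 m^{N-2}}{\lambda^{N-1}}+O(\lambda^{-3-\varepsilon})\Bigr).
\]
This direct route succeeds for $\lambda$ precisely because the error is $O(\lambda^{-3-\varepsilon})$, one order better than the $\lambda^{-3}$ main terms; for $\bar r$ and $\bar y''$ the analogous direct computation only gives $O(\lambda^{-1-\varepsilon})$, which swamps the $\lambda^{-2}$ signal---that asymmetry is the whole reason Pohozaev is invoked for position but not for scale. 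If you attempt to read off the $\lambda$ balance from the dilational Pohozaev you will instead recover the $\bar r$ equation and will have to disentangle the two after the fact.
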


    Now we outline the main idea of the proof of Theorem \ref{Thm2}. The corresponding functional for \eqref{Main} is defined by
    \[
        I(u,v) = \int_{\R^N} (\nabla u \cdot \nabla v + V(|y'|,y'')\, uv ) \, \mathrm{d}y - \frac{1}{p+1} \int_{\R^N} |v|^{p+1} \, \mathrm{d}y - \frac{1}{q+1} \int_{\R^N} |u|^{q+1} \, \mathrm{d}y,
    \]
    which is well-defined in $X$ since
    \begin{equation*}
        \begin{cases}
            \dot{W}^{2,\frac{p+1}{p}}(\R^N) \hookrightarrow \dot{W}^{1,p^*}(\R^N) \hookrightarrow L^{q+1}(\R^N),\\
            \dot{W}^{2,\frac{q+1}{q}}(\R^N) \hookrightarrow \dot{W}^{1,q^*}(\R^N) \hookrightarrow L^{p+1}(\R^N),
        \end{cases}
    \end{equation*}
    with
    \[
        \frac{1}{p^*} = \frac{p}{p+1} - \frac{1}{N} = \frac{1}{q+1} + \frac{1}{N}, \quad \frac{1}{q^*} = \frac{q}{q+1} - \frac{1}{N} = \frac{1}{p+1} + \frac{1}{N}.
    \]
    Based on a finite-dimensional reduction argument, finding a critical point of $I(u,v)$ of the form in Theorem \ref{Thm2} can be reduced to finding a critical point of
    \[
        F(\bar{r}_m, \bar{y}_m'', \lambda_m) = I\big((Y_{\bar{r}_m, \bar{y}_m'', \lambda_m} + \phi_m,\, Z_{\bar{r}_m, \bar{y}_m'', \lambda_m} + \psi_m)\big),
    \]
    where $\lambda_m \in [L_0 m^{\frac{N-2}{N-4}}, L_1 m^{\frac{N-2}{N-4}}]$ and $(\bar{r}_m, \bar{y}_m'')$ satisfy \eqref{condition_ry}. To handle the large number of bubbles in the solution, motivated by \cite{WY}, we carry out the reduction procedure in a space equipped with a weighted maximum norm instead of the standard Sobolev space. We then turn to solving the corresponding finite-dimensional problem. We avoid directly calculating the partial derivatives of $F(\bar{r}_m, \bar{y}_m'', \lambda_m)$, since it is difficult to obtain sharp estimates for the error terms. In fact, from \eqref{partial derivative_1} and \eqref{partial derivative_2}, one has
    \[
        \frac{\partial F}{\partial \bar{r}} = m \Bigg( \frac{B_1}{\lambda^2} \frac{\partial V(\bar{r},\bar{y}'')}{\partial \bar{r}} + \sum_{j=2}^m \frac{B_2}{\bar{r} \lambda^{N-1} |x_1 - x_j|^{N-2}} + O\Big(\frac{1}{\lambda^{1+\varepsilon}}\Big) \Bigg),
    \]
   and
    \[
        \frac{\partial F}{\partial \bar{y}_k''} = m \Bigg( \frac{B_1}{\lambda^2} \frac{\partial V(\bar{r},\bar{y}'')}{\partial \bar{y}_k''} + O\Big(\frac{1}{\lambda^{1+\varepsilon}}\Big) \Bigg), \quad k = 3, \ldots, N.
    \]
    It is shown that the error terms can dominate the main terms in the above expansions. Instead, inspired by \cite{PWY}, we employ local Pohozaev identities to determine the location of the bubbles. More details are provided in Section 3.

    \vskip8pt
    Our paper is organized as follows. In Section 2, we perform the finite-dimensional reduction. We define suitable weighted functional spaces, in which we study the linearized equation and prove the invertibility of this linearized operator on the orthogonal complete of its kernel. In Section 3, we study the reduced finite-dimensional problem and prove Theorem \ref{Thm2}. In the Appendix, we collect all the  essential technical estimates needed throughout the paper. This includes the asymptotic behavior and non-degeneracy of the $(U_{0,1},V_{0,1})$ (Appendix A), sharp estimates of the approximate solution $Y_{\bar{r}, \bar{y}'',\lambda}$ (Appendix B), the energy expansion (Appendix C),
    and various other integral and norm estimates (Appendix D).

    \medskip

    In the following of the  paper, we use $C$ to denote a positive constant which may vary from line to line. In addition, we sometimes use $\int_A f$ to denote $\int_{A}f(x) \ \dx$ if there is no confusion.

\section{Finite Dimensional Reduction}
Define
    $$\|u\|_{*,1}=\sup\limits_{y\in\R^N}\Big(\sum\limits_{j=1}^{m}\frac{1}{(1+\lambda|y-x_{j}|)^{\frac{N}{q+1}+\tau}}\Big)^{-1} \lambda^{-\frac{N}{q+1}}|u(y)|,$$
    $$\|v\|_{*,2}=\sup\limits_{y\in\R^N}\Big(\sum\limits_{j=1}^{m}\frac{1}{(1+\lambda|y-x_{j}|)^{\frac{N}{p+1}+\tau}}\Big)^{-1} \lambda^{-\frac{N}{p+1}}|v(y)|,$$
and
    $$\|u\|_{**,1}=\sup\limits_{y\in \R^N}\Big(\sum\limits_{j=1}^{m}\frac{1}{(1+\lambda|y-x_{j}|)^{\frac{N}{q+1}+2+\tau}}\Big)^{-1}\lambda^{-\frac{N}{q+1}-2}|u(y)|,$$
    $$\|v\|_{**,2}=\sup\limits_{y\in \R^N}\Big(\sum\limits_{j=1}^{m}\frac{1}{(1+\lambda|y-x_{j}|)^{\frac{N}{p+1}+2+\tau}}\Big)^{-1}\lambda^{-\frac{N}{q+1}-2}|v(y)|,$$
where we choose $\tau = \frac{N-4}{N-2}$. Define
\begin{equation*}
    \|(u,v)\|_* = \|u\|_{*,1}+\|v\|_{*,2}, \quad \|(u,v)\|_{**} = \|u\|_{**,1}+\|v\|_{**,2}.
\end{equation*}
Then we set two Banach spaces
\begin{equation}\label{Xs}
    X_s=:\{(u,v)\in C(\R^N)\times C(\R^N): ||(u,v)||_{*}<+\infty\}\cap L_s,
\end{equation}
and
\begin{equation}\label{Ys}
    Y_s=:\{(f,g)\in C(\R^N)\times C(\R^N): ||(f,g)||_{**}<+\infty\}\cap L_s.
\end{equation}
equipped with the norms $||\cdot||_{*}$ and $||\cdot||_{**}$ respectively.
Denote
\[
    Y_{x_j, \lambda}(y) = \xi U_{x_j, \lambda}(y),\quad Z_{x_j, \lambda}(y) = \xi V_{x_j, \lambda}(y),
\]
and
\[
    (Y_{j,l},Z_{j,l})= \left(\frac{\partial Y_{x_j,\lambda}}{\partial \Box_l},\frac{\partial Z_{x_j,\lambda}}{\partial \Box_l}\right),
    \quad
    (\bar{Y}_{j,l},\bar{Z}_{j,l})=\left((-\Delta+V)^{-1}\frac{\partial (Z_{x_j,\lambda}^p)}{\partial \Box_l},(-\Delta+V)^{-1}\frac{\partial (Y_{x_j,\lambda}^q)}{\partial \Box_l}\right),
\]
for $j=1,2,\cdots,m$, $l=1,2,\cdots,N$, where
\[
    \Box_l =\lambda, \ \ \text{if} \ \ l=1, \ \ \ \Box_l = \bar{r}, \ \ \text{if} \ \ l=2, \ \ \ \Box_l = \bar{y}_l'', \ \ \text{if}  \ \ l=3,\cdots,N.
\]
We will use this notation repeatedly in this article.

     Let $n_l = -1$ for $l=1$ and $n_l = 1$ for $l=2,3,\cdots,N$, then it holds that
    $$\frac{\partial U_{x_j,\lambda}}{\partial \Box_l}=O(\lambda^{n_l}U_{x_j,\lambda}),\quad \frac{\partial V_{x_j,\lambda}}{\partial \Box_l}=O(\lambda^{n_l}V_{x_j,\lambda}).$$
Define
\begin{equation}\label{E}
    E=:\left\{ (u,v)\in X_s:  \left\langle \left(\sum\limits_{j=1}^{m} q Y_{x_j,\lambda}^{q-1} Y_{j,l},  \sum\limits_{j=1}^{m} p Z_{x_j,\lambda}^{p-1} Z_{j,l} \right), (u,v)  \right\rangle = 0, \;\; l=1,2,\ldots,N\right\}
\end{equation}
and
\begin{equation}\label{F}
    F=:\left\{(f,g)\in Y_s: \left\langle \left( \sum\limits_{j=1}^{m}\bar{Z}_{j,l}, \sum\limits_{j=1}^{m}\bar{Y}_{j,l}\right), (f,g)  \right\rangle = 0, \;\; l=1,2,\ldots,N\right\}
\end{equation}
where we set
$$\langle (\phi_1,\phi_2),(\psi_1,\psi_2) \rangle:= \langle \phi_1, \psi_1 \rangle + \langle \phi_2, \psi_2 \rangle: = \int_{\R^N} \phi_1\psi_1 + \phi_2 \psi_2.$$

We will perform the reduction procedure in $E$. For this purpose, we need to find a solution of the form $(Y_{\bar{r}, \bar{y}'', \lambda} + \phi, Z_{\bar{r}, \bar{y}'', \lambda} +\psi )$ for \eqref{Main} such that $(\phi,\psi)\in E$. By direct computation, we can verify that $(\phi, \psi)$ satisfies the following equation:
\begin{equation}\label{op-1}
    L(\phi,\psi) =l +N(\phi,\psi),
\end{equation}
where
\begin{equation}\label{op-2}
    \begin{split}
        L(\phi,\psi) =& \Big(L_1 (\phi,\psi), L_2 (\phi, \psi)\Big) \\
        =&\Big(   -\Delta \phi + V\phi -p(Z_{\bar{r}, \bar{y}'', \lambda})^{p-1} \psi,  -\Delta \psi + V\psi -q(Y_{\bar{r}, \bar{y}'', \lambda})^{q-1} \phi \Big),
    \end{split}
\end{equation}
\begin{equation}\label{op-3}
    \begin{split}
        l= \big(l_1,l_2\big)= \big( & ( Z_{\bar{r}, \bar{y}'', \lambda}^p - \xi (Z_{\bar{r}, \bar{y}'', \lambda}^*)^p) - VY_{\bar{r}, \bar{y}'', \lambda} + Y_{\bar{r}, \bar{y}'', \lambda}^* \Delta \xi + 2\nabla \xi \nabla Y_{\bar{r}, \bar{y}'', \lambda}^*, \\
        & (Y_{\bar{r}, \bar{y}'', \lambda}^q - \xi \sum\limits_{j=1}^m U_{x_j,\lambda}^q) - VZ_{\bar{r}, \bar{y}'', \lambda} + Z_{\bar{r}, \bar{y}'', \lambda}^* \Delta \xi + 2\nabla \xi \nabla Z_{\bar{r}, \bar{y}'', \lambda}^*  \big)
    \end{split}
\end{equation}
and
\begin{equation}\label{op-4}
    N(\phi,\psi)= \Big(N_1(\psi),N_2(\phi)\Big) ,
\end{equation}
with
\begin{equation*}
    \begin{split}
        N_1(\psi) = & |Z_{\bar{r}, \bar{y}'', \lambda} + \psi|^{p-1}(Z_{\bar{r}, \bar{y}'', \lambda} + \psi) - Z_{\bar{r}, \bar{y}'', \lambda}^p - pZ_{\bar{r}, \bar{y}'', \lambda}^{p-1}\psi  , \\
        N_2(\phi) = & | Y_{\bar{r}, \bar{y}'', \lambda} + \phi|^{q-1}(Y_{\bar{r}, \bar{y}'', \lambda} + \phi) - Y_{\bar{r}, \bar{y}'', \lambda}^q - q Y_{\bar{r}, \bar{y}'', \lambda}^{q-1}\phi  .
    \end{split}
\end{equation*}
This motivates us to consider the following problem:
\begin{equation}\label{linear-equation}
   \begin{cases}
      L(\phi,\psi) = (h_1,h_2) + \sum\limits_{l=1}^{N} c_l \left( \sum\limits_{j=1}^{m} pZ_{x_j,\lambda}^{p-1} Z_{j,l}, \sum\limits_{j=1}^{m} qY_{x_j,\lambda}^{q-1} Y_{j,l} \right)\;\;\; \hbox{in} \;\;\; \R^N,\\
      (\phi, \psi) \in E.
   \end{cases}
\end{equation}

\begin{lemma}\label{blowup}
    Suppose $(\phi_m, \psi_m)$ solves \eqref{linear-equation} with $(h_1,h_2) = h_m:=(h_{m,1},h_{m,2})$. If $||h_m||_{**} \rightarrow{ 0}$, then $||(\phi_m, \psi_m)||_{*} \rightarrow 0$.
\end{lemma}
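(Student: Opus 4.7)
The plan is to argue by contradiction in the standard blow-up style used for such linearized systems. Assume there exist sequences $(\phi_m,\psi_m)\in E$ and $h_m=(h_{m,1},h_{m,2})$ with $\|h_m\|_{**}\to 0$, $\|(\phi_m,\psi_m)\|_{*}=1$, and Lagrange multipliers $c_l=c_l^{(m)}$ so that \eqref{linear-equation} holds. The goal is to derive a contradiction with $\|(\phi_m,\psi_m)\|_{*}=1$ by showing that $\phi_m$ and $\psi_m$ both tend to zero in the weighted $*$-norm.

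First I would control the multipliers $c_l^{(m)}$. Testing \eqref{linear-equation} against the pair $(\bar Z_{j,l},\bar Y_{j,l})$ (the natural dual family), integrating by parts to move $-\Delta$ onto the test functions, and exploiting that at the limit $\bar Y_{j,l},\bar Z_{j,l}$ produce the standard kernel directions of the Lane--Emden linearization, one gets an almost-diagonal linear system for $(c_l^{(m)})$ whose leading coefficient is of order $\lambda^{2n_l-2}$. The right-hand sides are of two types: (i) $\langle h_m,(\bar Z_{j,l},\bar Y_{j,l})\rangle$, which is $o(\lambda^{n_l-2})$ because $\|h_m\|_{**}=o(1)$ by assumption and $|\bar Y_{j,l}|,|\bar Z_{j,l}|$ enjoy well-known pointwise bounds against the corresponding weights; and (ii) the contribution from the potential term $V(Y_{\bar r,\bar y'',\lambda},Z_{\bar r,\bar y'',\lambda})$ already present in $L$, which is of the same small order using $V\in L^\infty$ and the decay of $U_{x_j,\lambda}$, $V_{x_j,\lambda}$. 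Inverting the diagonal system, I expect $c_l^{(m)}=o(\lambda^{-n_l})$ so the multiplier terms are negligible in the $**$-norm.

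Next, I would use the Green's representation for $-\Delta+V$. Writing
\[
 \phi_m(y)=\int_{\R^N} G(y,z)\bigl[pZ_{\bar r,\bar y'',\lambda}^{p-1}\psi_m - V\phi_m + h_{m,1} + \text{mult.}\bigr]\,\dz,
\]
and similarly for $\psi_m$, and plugging in the definition of the $*$- and $**$-norms, the convolution estimates in the appendix (typically of the form $\int |y-z|^{2-N}(1+\lambda|z-x_j|)^{-\mu}\,\dz \lesssim \lambda^{-2}(1+\lambda|y-x_j|)^{-\mu+2}$) give
\[
 \|(\phi_m,\psi_m)\|_{*}\;\le\; C\Bigl(\|h_m\|_{**}+ o(1)\,\|(\phi_m,\psi_m)\|_{*}+ \sup_y\Sigma_m(y)\Bigr),
\]
where $\Sigma_m(y)$ is a local remainder coming from the non-self-coupled terms that cannot be absorbed by weights alone. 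The small factor $o(1)$ in front of $\|(\phi_m,\psi_m)\|_*$ uses the $L^{N/2}$-smallness of $V$ inside $\supp\xi$ up to the cutoff region, together with the gap $\tau=(N-4)/(N-2)$ chosen exactly so the exponents match. This reduces the problem to showing that $\Sigma_m(y)\to 0$ uniformly; the only obstruction is mass concentrated near some bubble $x_j$.

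Therefore I would perform the local analysis: by $S_s$-symmetry it suffices to look at $x_1$. Set
\[
 \tilde\phi_m(y)=\lambda_m^{-N/(q+1)}\phi_m(x_1+y/\lambda_m),\qquad \tilde\psi_m(y)=\lambda_m^{-N/(p+1)}\psi_m(x_1+y/\lambda_m).
\]
The bounds from Step 2 give $|\tilde\phi_m|,|\tilde\psi_m|\le C$ on compact sets, so elliptic estimates and a diagonal extraction produce $C^{1}_{\mathrm{loc}}$ limits $(\tilde\phi_\infty,\tilde\psi_\infty)$ solving the linearized Lane--Emden system
\[
 -\Delta\tilde\phi_\infty = p V_{0,1}^{p-1}\tilde\psi_\infty,\qquad -\Delta\tilde\psi_\infty = q U_{0,1}^{q-1}\tilde\phi_\infty \quad\text{in }\R^N,
\]
in the natural weighted space. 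By the non-degeneracy result of Frank--Kim--Pistoia (or Hulshof--van der Vorst / Wang) for the Lane--Emden ground state, every such solution is a linear combination of the $N+1$ kernel functions $\partial_\lambda(U_{0,1},V_{0,1})$ and $\partial_{x_i}(U_{0,1},V_{0,1})$. Passing the orthogonality conditions defining $E$ to the limit kills all of these coefficients, hence $(\tilde\phi_\infty,\tilde\psi_\infty)\equiv 0$. This means the local mass near each $x_j$ vanishes, so $\Sigma_m\to 0$ and the Green's estimate yields $\|(\phi_m,\psi_m)\|_{*}\to 0$, contradicting the normalization.

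The main obstacle I expect is the Lagrange multiplier estimate in the Hamiltonian setting: because $\phi$ and $\psi$ play asymmetric roles and carry different scaling weights $\lambda^{N/(q+1)}$ vs.\ $\lambda^{N/(p+1)}$, one has to be careful that the matrix coupling the $c_l^{(m)}$ to the projections onto $(\bar Z_{j,l},\bar Y_{j,l})$ is non-degenerate after rescaling and that the error from replacing $U_{x_j,\lambda}$ by $Y_{x_j,\lambda}=\xi U_{x_j,\lambda}$ is genuinely $o(1)$ in the relevant norm. A secondary subtlety is verifying the convolution inequalities with the exponent choice $\tau=(N-4)/(N-2)$, in particular checking $N\ge 5$ and $p>(N+1)/(N-2)$ are exactly what is needed to keep all weight exponents admissible.
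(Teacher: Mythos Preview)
Your proposal is correct and follows essentially the same blow-up/contradiction strategy as the paper: Green's representation, multiplier estimate by testing against the kernel directions, a weighted inequality with an extra-decay gain on the coupling term that forces concentration near some $x_j$, and then rescaling plus non-degeneracy of the Lane--Emden linearization (Lemma~\ref{L2}) to reach a contradiction. The only discrepancies are cosmetic---the paper tests the two equations against $Z_{1,h}$ and $Y_{1,h}$ directly rather than the dual pair $(\bar Z_{j,l},\bar Y_{j,l})$, and it uses the free Green's function $|y-z|^{2-N}$ via the maximum principle ($V\ge 0$) rather than the Green's function of $-\Delta+V$; in particular no $L^{N/2}$-smallness of $V$ is invoked anywhere, since the $o(1)$ gain you need comes solely from the extra decay $\theta>0$ produced by the convolution estimate for $Z_{\bar r,\bar y'',\lambda}^{\,p-1}\psi_m$ (Lemma~\ref{B4}).
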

\begin{proof}
      Suppose in contrast that $(\phi_{m}, \psi_{m})$ solves equation \eqref{linear-equation} with
      \[
          (h_1,h_2)=h_m  = (h_{m,1}, h_{m,2}), \ \ \bar{r} = \bar{r}_m, \ \ \lambda = \lambda_m \ \ \hbox{ and } \ \ \bar{y}'' = \bar{y}_m'',
      \]
      and
      \[
          ||h_m||_{**} \rightarrow 0, \ \ \bar{r}_m \rightarrow r_0 \ \  \hbox{ and } \ \ \bar{y}_m'' \rightarrow \bar{y}_0'',
      \]
      and
      \[
          \lambda_m \in [L_0 m^{\frac{N-2}{N-4}}, L_1 m^{\frac{N-2}{N-4}}], \ \  ||(\phi_{m}, \psi_{m})||_{*} \geq c > 0  \ \ \hbox{ as } \ \ k \rightarrow \infty.
      \]
        Without loss of generality, we may assume $||(\phi_{m}, \psi_{m})||_{*} = 1 $. For simplicity, we drop the subscript $m$ and write $h=(h_1,h_2)$.

  According to the definition of $L(\phi,\psi)$, we have
 \begin{equation}\label{omega1}
       -\Delta \phi + V(|y'|,y'')\phi - p(Z_{\bar{r},\bar{y}'',\lambda})^{p-1}\psi = h_1 + \sum\limits_{l=1}^{N}c_l \sum\limits_{j=1}^m p Z_{x_j,\lambda}^{p-1}Z_{j,l},
 \end{equation}
 and
\begin{equation}\label{omega2}
       -\Delta \psi + V(|y'|,y'')\psi - q(Y_{\bar{r},\bar{y}'',\lambda})^{q-1}\phi = h_2 + \sum\limits_{l=1}^{N}c_l \sum\limits_{j=1}^m q Y_{x_j,\lambda}^{q-1}Y_{j,l}.
 \end{equation}
 By Green's representation formula for $-\Delta+V$, we have
    \begin{align}\label{esti-omega1}
          |\phi(y)| \leq C \int_{\R^N} \dfrac{1}{|y-z|^{N-2}} \left( Z_{\bar{r},\bar{y}'',\lambda}^{p-1}(z)|\psi(z)| + |h_{1}(z)| + \left| \sum\limits_{l=1}^{N}c_l \sum\limits_{j=1}^m p Z_{x_j,\lambda}^{p-1}(z)Z_{j,l}(z) \right|\right) \ \dz,
    \end{align}
    and
    \begin{align}
          |\psi(y)| \leq C \int_{\R^N} \dfrac{1}{|y-z|^{N-2}} \left(   |Y_{\bar{r},\bar{y}'',\lambda}^{q-1}(z)|| \phi(z) | + |h_2(z) | + \left| \sum\limits_{l=1}^{N}c_l \sum\limits_{j=1}^m q Y_{x_j,\lambda}^{q-1}(z)Y_{j,l}(z) \right|  \right) \ \dz.
    \end{align}
    Next, we will prove that the following function
    \begin{equation}
        \left[\sum_{j=1}^{m}\frac{\lambda^{\frac{N}{q+1}}}{(1+\lambda |y-x_j|)^{\frac{N}{q+1}+\tau}}\right]^{-1}|\phi(y)| +
        \left[\sum_{j=1}^{m}\frac{\lambda^{\frac{N}{p+1}}}{(1+\lambda |y-x_j|)^{\frac{N}{p+1}+\tau}}\right]^{-1}|\psi(y)|
    \end{equation}
    can only achieve its maximum in $\cup_{i=1}^m \overline{B(x_i,M\lambda^{-1})}$ for some large constant $M>1$. For this purpose, we will estimate the above function when $y \in \R^N\backslash \cup_{i=1}^m {B(x_i,M\lambda^{-1})}$.

\textbf{Estimate $\phi$ and $\psi$.} To begin with, we split the first integral on the right hand side of (\ref{esti-omega1}) into several parts
\begin{align*}
    &\int_{\R^N} \dfrac{1}{|y-z|^{N-2}}Z_{\bar{r},\bar{y}'',\lambda}^{p-1}(z)|\psi(z)| dz\\
    &\leq C||\psi||_{*,2}\left(\int_{\R^N\backslash \cup_{i=1}^m {B(x_i,M\lambda^{-1})}}+\sum_{k=1}^m \int_{B(x_k,M\lambda^{-1})} \right)\frac{\lambda^{\frac{pN}{p+1}}}{|y-z|^{N-2}}\\ &\quad\times\left[\sum_{j=1}^{m}\frac{1}{(1+\lambda|z-x_i|)^{N-2}}\right]^{p-1} \sum_{j=1}^{m}\frac{\dz}{(1+\lambda|z-x_j|)^{\frac{N}{p+1}+\tau}}
    :=I_0+\sum_{k=1}^mI_k.
\end{align*}
Note that $N-2>\frac{N}{p+1}+\tau$ when $p> \frac{N}{N-2}$. Therefore, for some small constant $\sigma>0$, we have
    \begin{align}
        I_0
        &\leq \frac {C||\psi||_{*,2}}{M^{\sigma}}\int_{\R^N\backslash \cup_{i=1}^m {B(x_i,M\lambda^{-1})}} \frac{\lambda^{\frac{pN}{p+1}}}{|y-z|^{N-2}} \left[\sum_{j=1}^{m}\frac{1}{(1+\lambda|z-x_j|)^{\frac{N}{p+1}+\tau}}\right]^p \ \dz \nonumber\\
        &\leq \frac {C||\psi||_{*,2}}{M^{\sigma}}\int_{\R^N\backslash \cup_{i=1}^m {B(x_i,M\lambda^{-1})}}\frac{\lambda^{\frac{pN}{p+1}}}{|y-z|^{N-2}}\sum_{j=1}^{m}\frac{1}{(1+\lambda|z-x_j|)^{\frac{pN}{p+1}+\tau}}\ \dz \label{I0}\\
        &\leq \frac {C||\psi||_{*,2}}{M^{\sigma}}\sum_{j=1}^{m}\frac{\lambda^{\frac{N}{q+1}}}{(1+\lambda|y-x_j|)^{\frac{N}{q+1}+\tau}}, \nonumber
    \end{align}
 where the second inequality follows from Lemma \ref{B1_1} and the last one follows from Lemma \ref{B3} and $\frac{pN}{p+1}=2+\frac{N}{q+1}$.

Next, we estimate $I_k$ for $k=1,2,\cdots,m$. Note that for any $\alpha\geq \tau$ and $z \in {B(x_k,M\lambda^{-1})}$, it holds that
\begin{equation}\label{Ik-0}
    \sum_{j\neq k}\frac{1}{(1+\lambda|z-x_j|)^{\alpha}}\leq C\leq \frac{C}{(1+\lambda|z-x_k|)^{\alpha}}.
\end{equation}
Therefore, we can derive from Lemma \ref{B3} and \eqref{Ik-0} that, for $y\in \R^N\backslash \cup_{i=1}^m {B(x_i,M\lambda^{-1})}$,
\begin{equation}\label{Ik}
    \begin{split}
        I_k&\leq C||\psi||_{*,2}\int_{ {B(x_k,M\lambda^{-1})}} \frac{1}{|y-z|^{N-2}}\frac{\lambda^{\frac{pN}{p+1}}}{(1+\lambda|z-x_k|)^{(N-2)(p-1)+\frac{N}{p+1}+\tau}}\ \dz\\
        &\leq C||\psi||_{*,2} \frac{\lambda^{\frac{N}{q+1}}}{(1+\lambda|y-x_k|)^{\min\{(N-2)(p-1)+\frac{N}{p+1}+\tau-2,N-2-\theta\}}}\\
        &\leq \frac {C||\psi||_{*,2}}{M^{\sigma}}\frac{\lambda^{\frac{N}{q+1}}}{(1+\lambda|y-x_k|)^{\frac{N}{q+1}+\tau}}.
    \end{split}
\end{equation}
Combine \eqref{I0} and \eqref{Ik}, it holds that for any $y\in \R^N\backslash \cup_{i=1}^m {B(x_i,M\lambda^{-1})}$,
\begin{align}\label{omega1-1}
    \int_{\R^N} \dfrac{1}{|y-z|^{N-2}}Z_{\bar{r},\bar{y}'',\lambda}^{p-1}(z)|\psi(z)| \ \dz\leq \frac {C||\psi||_{*,2}}{M^{\sigma}}\sum_{j=1}^{m}\frac{\lambda^{\frac{N}{q+1}}}{(1+\lambda|y-x_j|)^{\frac{N}{q+1}+\tau}}.
\end{align}
Similarly, we can employ Lemma \ref{lemma:U} to obtain for any $y\in \R^N\backslash \cup_{i=1}^m {B(x_i,M\lambda^{-1})}$,
\begin{align}\label{omega2-1}
    \int_{\R^N} \dfrac{1}{|y-z|^{N-2}} | Y_{\bar{r},\bar{y}'',\lambda}^{q-1}(z)| |\phi(z) | \ \dz\leq \frac {C||\phi||_{*,2}}{M^{\sigma}}\sum_{j=1}^{m}\frac{\lambda^{\frac{N}{p+1}}}{(1+\lambda|y-x_j|)^{\frac{N}{p+1}+\tau}}.
\end{align}

Next, it follows from Lemma \ref{B3} and the fact $\frac{N}{q+1}+\tau < N-2$, that
\begin{equation}\label{omega1-2}
   \begin{split}
     \int_{\R^N} \dfrac{1}{|z-y|^{N-2}} |h_1(z)|\  \dz
       &\leq ||h_1||_{**,1} \int_{\R^N} \dfrac{1}{|z-y|^{N-2}} \sum\limits_{j=1}^{m}\dfrac{\lambda^{\frac{N}{q+1}+2}}{(1+\lambda|z-x_j|)^{\frac{N}{q+1}+2+\tau}}\ \dz \\
      &\leq ||h_1||_{**,1} \sum\limits_{j=1}^{m}\dfrac{\lambda^{\frac{N}{q+1}}}{(1+\lambda|y-x_j|)^{\frac{N}{q+1}+\tau}}.
   \end{split}
\end{equation}

Next we consider the last term on the right side of (\ref{esti-omega1}). By the definitions of $Z_{x_j,\lambda}$ and $Z_{j,l}$, we have
\[
    \begin{split}
        |Z_{x_j,\lambda}^{p-1}Z_{j,l}| & \leq \left|V_{x_{j}, \lambda}^{p-1} Z_{j,l} \right| \leq C\lambda^{n_l} |V_{x_j,\lambda}|^p
        \leq C\dfrac{\lambda^{n_l+ \frac{pN}{p+1}}}{(1+\lambda|z-x_j|)^{(N- 2)p}},
    \end{split}
\]
where $n_l = -1$ for $l=1$ and $n_l = 1$ for $j=2,3,\cdots,N$. Thus, it follows that
   \begin{align}
       \int_{\R^N} \dfrac{1}{|y-z|^{N-2}} \big| \sum\limits_{j=1}^m Z_{x_j,\lambda}^{p-1}Z_{j,l} \big| \ \dz
      &\leq C \int_{\R^N} \dfrac{1}{|y-z|^{N-2}} \sum\limits_{j=1}^{m}  \dfrac{\lambda^{n_l+ \frac{pN}{p+1}}}{(1+\lambda|z-x_j|)^{(N-2)p}}\ \dz  \label{omega1-3} \\
       &\leq C \sum\limits_{j=1}^{m} \dfrac{\lambda^{n_l + \frac{N}{q+1}}}{(1+ \lambda|y-x_j|)^{N-2}}
      \leq C \sum\limits_{j=1}^{m} \dfrac{\lambda^{n_l + \frac{N}{q+1}}}{(1+\lambda|y-x_j|)^{\frac{N}{q+1}+\tau}}. \nonumber
\end{align}
In addition, we have similar results for $\psi$:
\begin{equation}\label{omega2-2}
    \int_{\R^N} \dfrac{1}{|y-z|^{N-2}} |h_{2}(z)| \ \dz \leq C||h_2||_{**,2}\sum\limits_{j=1}^{m}\dfrac{\lambda^{\frac{N}{p+1}}}{(1+\lambda|y-x_j|)^{\frac{N}{p+1} + \tau}} ,
\end{equation}
and
\begin{equation}\label{omega2-3}
    \int_{\R^N} \dfrac{1}{|y-z|^{N-2}} \left| \sum\limits_{j=1}^m q Y_{x_j,\lambda}^{q-1}Y_{j,l} \right|  \ \dz \leq C \sum\limits_{j=1}^{m}\dfrac{\lambda^{\frac{N}{p+1} + n_l}}{(1+\lambda|y-x_j|)^{\frac{N}{p+1} + \tau}} .
\end{equation}

\textbf{Estimate $c_l$ for $l=1,2,\cdots,N$.} Multiplying $Z_{1,h}$ on both sides of equation (\ref{omega1}) and integrating over $\R^N$, we have
\begin{equation}\label{c1}
       \sum\limits_{l=1}^{N}c_l \sum\limits_{j=1}^m p\left\langle Z_{x_j,\lambda}^{p-1}Z_{j,l}   , Z_{1,h}  \right\rangle = \left\langle -\Delta \phi+ V(|y'|,y'') \phi- p(Z_{\bar{r}, \bar{y}'',\lambda})^{p-1}\psi , Z_{1,h} \right\rangle - \left\langle h_1 , Z_{1,h} \right\rangle.
\end{equation}
Multiplying $Y_{1,h}$ on both sides of equation (\ref{omega2}) and integrating over $\R^N$, we have
\begin{equation}\label{c2}
     \sum\limits_{l=1}^{N}c_l \sum\limits_{j=1}^m q \left\langle Y_{x_j,\lambda}^{q-1}Y_{j,l}, Y_{1,h}  \right\rangle = \left\langle -\Delta \psi+ V(|y'|,y'') \psi - q(Y_{\bar{r}, \bar{y}'',\lambda})^{q-1}\phi , Y_{1,h} \right\rangle - \left\langle h_2 , Y_{1,h} \right\rangle.
\end{equation}
By adding equations (\ref{c1}) and (\ref{c2}), we obtain
   \begin{align}
        & \quad \sum\limits_{l=1}^{N}c_l \sum\limits_{j=1}^m \left( p\left\langle Z_{x_j,\lambda}^{p-1}Z_{j,l}   , Z_{1,h}   \right\rangle +  q\left\langle Y_{x_j,\lambda}^{q-1}Y_{j,l}, Y_{1,h} \right\rangle \right) \nonumber  \\
        &= -\left\langle h_1 , Z_{1,h} \right\rangle - \left\langle h_2 , Y_{1,h} \right\rangle + \left\langle -\Delta Z_{1,h} +  V(|y'|,y'') Z_{1,h} - q (Y_{\bar{r}, \bar{y}'',\lambda})^{q-1}Y_{1,h}, \phi \right\rangle \label{c3} \\
        & \quad + \left\langle -\Delta Y_{1,h} +  V(|y'|,y'') Y_{1,h} - p (Z_{\bar{r}, \bar{y}'',\lambda})^{p-1}Z_{1,h}, \psi \right\rangle . \nonumber
   \end{align}
We estimate each term on both sides of \eqref{c3}.

First, using Lemma \ref{B2}, we have
\begin{align}
   | \left\langle h_1 , Z_{1,h} \right\rangle |
   & \leq C ||h_{1}||_{**,1} \int_{\R^N} \dfrac{\lambda^{n_h+ \frac{N}{p+1}}}{(1+\lambda|y-x_1|)^{N-2}}\sum\limits_{j=1}^{m} \dfrac{\lambda^{\frac{N}{q+1}+2}}{(1+ \lambda|y-x_j|)^{\frac{N}{q+1}+ 2+\tau}} \ \dy \nonumber \\
   & \leq C \lambda^{n_h} ||h_1||_{**,1} \int_{\R^N} \dfrac{\lambda^N}{(1+\lambda|y-x_1|)^{N + \frac{N}{q+1} + \tau}} \ \dy  \label{ch1} \\
   & \quad + C \lambda^{n_h} ||h_1||_{**,1}\sum\limits_{j=2}^m \dfrac{1}{(\lambda |x_j - x_1|)^{\tau}}\int_{\R^N}  \dfrac{\lambda^{N}}{(1+ \lambda|y-x_j|)^{N+\frac{N}{q+1} }}  \ \dy \nonumber \\
   & \leq C \lambda^{n_h} ||h_1||_{**,1} \left(1 + \sum\limits_{j=2}^m \dfrac{1}{(\lambda |x_j - x_1|)^{\tau}} \right) \leq C\lambda^{n_h} ||h_1||_{**,1}.\nonumber
\end{align}
Similarly, we have
\begin{equation}\label{ch2}
    | \left\langle h_2 , Y_{1,h} \right\rangle | \leq C\lambda^{n_h} ||h_2||_{**,2}.
\end{equation}
Next, we estimate the last two terms on the right hand side of \eqref{c3}. Since $$\min\left\{\frac{N}{p+1}+N-2,\frac{N}{q+1}+N-2 \right\}=\frac{N}{q+1}+N-2>N-1,$$ we can use Lemma \ref{B1} to obtain
\begin{align}
     &\quad| \left\langle V(|y'|, y'') \psi , Y_{1,h} \right\rangle |   \leq \int_{\R^N} V(|y'|, y'') |\psi(y)| |Y_{1,h}(y)| \ \dy \nonumber \\
     & \leq C ||\psi||_{*,2} \int_{\R^N} \dfrac{\lambda^{n_h+ \frac{N}{q+1}}}{(1+\lambda|y-x_1|)^{N-2}}\sum\limits_{j=1}^{m} \dfrac{\lambda^{\frac{N}{p+1}}\xi(y)}{(1+ \lambda|y-x_j|)^{\frac{N}{p+1}+\tau}} \ \dy  \nonumber \\
     & \leq C \lambda^{n_h} ||\psi||_{*,2} \int_{\R^N} \dfrac{\lambda^{N-2}\xi(y)}{(1+\lambda|y-x_1|)^{N -2 + \frac{N}{p+1} + \tau}} \ \dy \label{cV1} \\
     &\quad + C \lambda^{n_h} ||\psi||_{*,2} \sum\limits_{j=2}^m \dfrac{1}{(\lambda |x_j - x_1|)^{\tau}}\int_{\R^N}  \dfrac{\lambda^{N-2}\xi(y)}{(1+ \lambda|y-x_j|)^{N-2+\frac{N}{p+1} }}  \ \dy \nonumber \\
     & \leq C \lambda^{n_h-1 - \varepsilon} ||\psi||_{*,2} \int_{\R^N} \dfrac{\lambda^{N}\xi(y)}{(1+\lambda|y-x_1|)^{N -2 + \frac{N}{p+1} +1 - \varepsilon}} \leq \dfrac{C\lambda^{n_h} ||\psi||_{*,2}}{\lambda^{1+\varepsilon}}.\nonumber
\end{align}
Here, we have used the fact that
$$\frac{1}{\lambda}\leq \frac{C}{1+\lambda|y-x_j|}, \ \ \text{if} \ \ y\in \text{supp} (\xi).$$
Similarly, we have
\begin{equation}\label{cV2}
   | \left\langle V(|y'|, y'') \psi , Y_{1,h} \right\rangle | \leq \dfrac{C\lambda^{n_h} ||\phi||_{*,1}}{\lambda^{1+\varepsilon}}.
\end{equation}
On the other hand, by Lemma \ref{B7}, we have
    \begin{align}\label{estimate_1}
        \left| \left\langle -\Delta Z_{1,h}  - q (Y_{\bar{r}, \bar{y}'',\lambda})^{q-1}Y_{1,h}, \phi \right\rangle\right|=O\left(\frac{\lambda^{n_h}||(\phi,\psi)||_{*}}{\lambda^{1+\varepsilon}} \right),
    \end{align}
and
    \begin{align}\label{estimate_2}
    \left|\left\langle -\Delta Y_{1,h}  - p (Z_{\bar{r}, \bar{y}'',\lambda})^{p-1}Z_{1,h}, \psi \right\rangle \right|=O\left( \frac{\lambda^{n_h}||(\phi,\psi)||_{*}}{\lambda^{\varepsilon}} \right).
    \end{align}
Combine \eqref{ch1} \eqref{ch2} \eqref{cV1} \eqref{cV2} \eqref{estimate_1} and \eqref{estimate_2}, we have
\begin{equation}\label{RHS}
    R.H.S. \ of \ \eqref{c3} = \lambda^{n_h}(o(||(\phi,\psi)||_{*})+ O(||(h_1,h_2)||_{**})).
\end{equation}
Besides, we have
\begin{equation}\label{LHS}
    \sum\limits_{j=1}^m \left( p\left\langle Z_{x_j,\lambda}^{p-1}Z_{j,l}   , Z_{1,h}   \right\rangle +  q\left\langle Y_{x_j,\lambda}^{q-1}Y_{j,l}, Y_{1,h} \right\rangle \right) = \delta_{hl}\lambda^{2n_h}a_h+o(\lambda^{n_l+n_h-2}),
\end{equation}
for some $a_h>0$. Inserting \eqref{RHS} \eqref{LHS} into \eqref{c3}, we have
\begin{equation}\label{c}
    c_h = \dfrac{1}{\lambda^{n_h}}(o(||(\phi,\psi)||_{*})+ O(||(h_1,h_2)||_{**})), \;\; h=1,2,\cdots,N.
\end{equation}

\textbf{Derive a contradiction.} Combine \eqref{omega1}, \eqref{omega2}, \eqref{omega1-1}, \eqref{omega2-1}, \eqref{omega1-2}, \eqref{omega1-3}, \eqref{omega2-2}, \eqref{omega2-3} and \eqref{c}, we get that for $y \in \R^N \setminus \cup_{i=1}^m B_{M\lambda^{-1}}(x_i)$,
\begin{equation}\label{21-1-6-21}
    \left[\sum_{j=1}^{m}\frac{\lambda^{\frac{N}{q+1}}}{(1+\lambda |y-x_j|)^{\frac{N}{q+1}+\tau}}\right]^{-1}|\phi_m(y)| \le C\left(\frac{1}{M^{\sigma}}+o(1)\right)\|(\phi_m, \psi_m)\|_{*} + C\|(h_{1,m}, h_{2,m})\|_{**}.
\end{equation}
and
\begin{equation}\label{n21-1-6-21}
    \left[\sum_{j=1}^{m}\frac{\lambda^{\frac{N}{p+1}}}{(1+\lambda |y-x_j|)^{\frac{N}{p+1}+\tau}}\right]^{-1}|\psi_m(y)| \le C\left( \frac{1}{M^{\sigma}}+o(1)\right)\|(\phi_m, \psi_m)\|_{*} + C\|(h_{1,m}, h_{2,m})\|_{**}.
\end{equation}
Since $||(\phi_m,\psi_m)||_{*} = 1$, we deduce that there is a $R>0$ and $ c_0 > 0$ such that
\[
    ||\lambda^{-\frac{N}{q+1}}\phi_m||_{L^\infty(B_{R/\lambda}(x_i))} + ||\lambda^{-\frac{N}{p+1}}\psi_m||_{L^\infty(B_{R/\lambda}(x_i))} \geq c_0 > 0,
\]
for some $i$. Define $(\bar{\phi}_m(y), \bar{\psi}_m(y))=(\lambda^{-\frac{N}{q+1}}\phi_m(\lambda^{-1}y+x_i), \lambda^{-\frac{N}{p+1}}\psi_m(\lambda^{-1}y+x_i))$, then $(\bar{\phi}_m, \bar{\psi}_m)$ converges uniformly in any compact set of $\R^{N}$ to a solution $(\Phi, \Psi)$ of
\begin{equation}\label{linear}
   \begin{cases}
   -\Delta \Phi = p V_{0,\Lambda}^{p-1} \Psi \;\;\;  \hbox{in } \R^N,\\
   -\Delta \Psi = q U_{0,\Lambda}^{q-1} \Phi  \;\;\;  \hbox{in } \R^N,
   \end{cases}
\end{equation}
for some $\Lambda \in [L_1,L_2]$. However, since $(\phi_m, \psi_m) \in E$, $(\Phi, \Psi)$ is perpendicular to the kernel of equation (\ref{linear}). So $\Phi=0, \Psi=0$. This is a contradiction.
\end{proof}

    Next, we consider the existence of solution for system \eqref{linear-equation}. For this purposes, we use a different approach. For any $(\phi,\psi) \in E$, define $(-\Delta+V)^{-1}(\phi,\psi)$ as
    $$
        (-\Delta+V)^{-1}(\phi,\psi)=\left(\int_{\R^N}G(x-y)\phi(y)\dy,\int_{\R^N}G(x-y)\psi(y)\dy \right),
    $$
    where $G$ is the Green function of the operator $-\Delta+V$ in $\R^N$. We can derive from \eqref{omega1-2} and the definition of $(\bar{Y}_{j,l},\bar{Z}_{j,l})$ that $(-\Delta+V)^{-1}$ is a bounded linear operator from $F$ to $E$, where $E$ and $F$ are defined in \eqref{E} and \eqref{F}.

    The following lemma is essential to the reduction argument.
    \begin{lemma}\label{prop1}
    Suppose $Y_s$ is defined as \eqref{Ys} and $F$ is defined as \eqref{F}. Then $F$ and
    \[
        \overline{F}:=\text{span}\left\{ \left( \sum\limits_{j=1}^{m} pZ_{x_j,\lambda}^{p-1} Z_{j,l}, \sum\limits_{j=1}^{m} qY_{x_j,\lambda}^{q-1} Y_{j,l} \right),\ l=1,2\cdots,N\right\}
    \]
    are topological complements to each other, and $Y_s=F\oplus \bar{F}$. Moreover, define the projection \textbf{P} from $Y_s$ to $F$ as follows:
    $$
        \textbf{P}(f,g)= \left(f-\sum\limits_{l=1}^{N} c_l\sum\limits_{j=1}^{m} pZ_{x_j,\lambda}^{p-1} Z_{j,l},g-\sum\limits_{l=1}^{N} c_l \sum\limits_{j=1}^{m} qY_{x_j,\lambda}^{q-1} Y_{j,l} \right),\,\, (f,g)\in Y_s
    $$
    where $\{c_l\}$ are chosen so that $\textbf{P}(f,g)\in F$.
    Then \textbf{P} is a linear bounded operator from $Y_s$ to $F$.
    \end{lemma}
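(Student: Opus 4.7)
The plan is to reduce everything to invertibility (with controlled inverse) of the $N \times N$ matrix
\[
M_{hl} := \left\langle \Big(\sum_{j=1}^m \bar{Z}_{j,h},\sum_{j=1}^m \bar{Y}_{j,h}\Big),\Big(\sum_{j=1}^m p Z_{x_j,\lambda}^{p-1}Z_{j,l},\sum_{j=1}^m q Y_{x_j,\lambda}^{q-1}Y_{j,l}\Big) \right\rangle.
\]
Requiring $\textbf{P}(f,g)\in F$ is equivalent to the linear system $\sum_l M_{hl}c_l = \langle(\sum_j\bar Z_{j,h},\sum_j\bar Y_{j,h}),(f,g)\rangle$ for $h=1,\dots,N$. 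Once $M$ is non-singular, the $c_l$ exist and are unique, giving both $Y_s=F+\bar F$ and $F\cap\bar F=\{0\}$ (any element of $\bar F$ lying in $F$ forces its coefficients to solve a homogeneous system, hence vanish). Thus the whole lemma reduces to (i) diagonal-dominance of $M$, and (ii) an estimate of the right-hand side in the $\|\cdot\|_{**}$ norm.

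The heart of the argument is the expansion of $M_{hl}$. Since the cutoff $\xi$ is independent of the parameters $\lambda,\bar r,\bar y''$, the chain rule gives the exact identity $pZ_{x_j,\lambda}^{p-1}Z_{j,l}=\partial_{\Box_l}Z_{x_j,\lambda}^p$ (and likewise for $Y$). Combined with the definition of $\bar Y_{i,l},\bar Z_{i,l}$ and self-adjointness of $(-\Delta)^{-1}$, this yields
\[
\int \bar Z_{j,h}\cdot pZ_{x_i,\lambda}^{p-1}Z_{i,l}=\int qY_{x_j,\lambda}^{q-1}Y_{j,h}\cdot\bar Y_{i,l},
\]
and an analogous identity with the roles of $(Y,Z)$ swapped, so that $M_{hl}$ is expressed purely through integrals involving the bubbles and their $(-\Delta)^{-1}$-images. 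For the diagonal $i=j$, rescaling $y\mapsto x_j+y/\lambda$ reduces the integrand to a quadratic form on the linearized Lane--Emden kernel spanned by $\partial_{\Box_h}U_{0,1},\partial_{\Box_h}V_{0,1}$, yielding the leading contribution $\delta_{hl}\,m\lambda^{2n_h}A_h$ with $A_h>0$; this is the analogue for $(\bar Z,\bar Y)$ of the expansion \eqref{LHS} from the proof of Lemma \ref{blowup}, with the cutoff remainders handled exactly as in Lemma \ref{B7}. Pairs $i\neq j$ contribute interaction integrals of size $(\lambda|x_i-x_j|)^{-(N-2)}$, and summed over the $m$ vertices of the regular polygon (with $\lambda\sim m^{(N-2)/(N-4)}$ and $|x_i-x_1|\gtrsim \bar r/m$) these are $o(m\lambda^{n_h+n_l})$. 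Hence $M$ is diagonally dominant with $M_{hh}\sim m\lambda^{2n_h}$ for $m$ large.

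For the boundedness of $\textbf{P}$, I would use Lemma \ref{B3} applied to the Newton representation of $\bar Z_{j,h},\bar Y_{j,h}$, together with the critical-hyperbola identities $pN/(p+1)=N/(q+1)+2$ and $qN/(q+1)=N/(p+1)+2$, to obtain
\[
|\bar Z_{j,h}(y)|\lesssim \frac{\lambda^{n_h+N/(p+1)}}{(1+\lambda|y-x_j|)^{N-2}},\qquad |\bar Y_{j,h}(y)|\lesssim \frac{\lambda^{n_h+N/(q+1)}}{(1+\lambda|y-x_j|)^{N-2}}.
\]
Pairing with $(f,g)\in Y_s$ via the same integration argument used in \eqref{ch1} then yields $|\langle(\sum_j\bar Z_{j,h},\sum_j\bar Y_{j,h}),(f,g)\rangle|\leq Cm\lambda^{n_h}\|(f,g)\|_{**}$, so solving the system gives $|c_l|\leq C\lambda^{-n_l}\|(f,g)\|_{**}$. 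A direct check using $p>(N+1)/(N-2)$, which ensures $(N-2)p\geq N/(q+1)+2+\tau$, gives the complementary bound $\|\sum_j pZ_{x_j,\lambda}^{p-1}Z_{j,l}\|_{**,1}+\|\sum_j qY_{x_j,\lambda}^{q-1}Y_{j,l}\|_{**,2}\leq C\lambda^{n_l}$, and combining these yields $\|\textbf{P}(f,g)\|_{**}\leq C\|(f,g)\|_{**}$.

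The main obstacle will be establishing positivity (and the precise structure) of the diagonal coefficients $A_h$, which ultimately rests on the non-degeneracy of the kernel of the linearized Lane--Emden system \eqref{linear} provided by \cite{Wang, HV} and already exploited implicitly in the contradiction argument of Lemma \ref{blowup}. Beyond this, the analysis is essentially a bookkeeping exercise using the template of interaction estimates already set up in Lemmas \ref{blowup} and \ref{B7}.
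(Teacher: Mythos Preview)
Your proposal is correct and follows essentially the same route as the paper: set up the $N\times N$ system $\sum_l M_{hl}c_l=\langle(\sum_j\bar Z_{j,h},\sum_j\bar Y_{j,h}),(f,g)\rangle$, show $M_{hl}=\delta_{hl}m\lambda^{2n_h}a_h+o(m\lambda^{n_h+n_l})$, bound the right side by $Cm\lambda^{n_h}\|(f,g)\|_{**}$, solve for $|c_l|\leq C\lambda^{-n_l}\|(f,g)\|_{**}$, and combine with $\|(\sum_j pZ_{x_j,\lambda}^{p-1}Z_{j,l},\sum_j qY_{x_j,\lambda}^{q-1}Y_{j,l})\|_{**}\leq C\lambda^{n_l}$. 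The only cosmetic difference is that the paper handles $M_{hl}$ by first proving the pointwise comparison $|\bar Y_{j,h}-\partial_{\Box_h}U_{x_j,\lambda}|\leq C\chi_{B^c}|\partial_{\Box_h}U_{x_j,\lambda}|$ (from $-\Delta\partial_{\Box_h}U_{x_j,\lambda}=\partial_{\Box_h}(V_{x_j,\lambda}^p)$ and elliptic estimates) and then reducing to $\int\partial_{\Box_l}(V_{x_j,\lambda}^p)\partial_{\Box_h}V_{x_j,\lambda}+\int\partial_{\Box_l}(U_{x_j,\lambda}^q)\partial_{\Box_h}U_{x_j,\lambda}$, rather than your self-adjointness manipulation---but these land on the same integrals.

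One small correction to your closing remark: the positivity of the diagonal constants $A_h$ and the vanishing of the off-diagonal terms do \emph{not} hinge on non-degeneracy of the linearized system. The diagonal integrand is $pV_{0,1}^{p-1}(\partial_{\Box_h}V_{0,1})^2+qU_{0,1}^{q-1}(\partial_{\Box_h}U_{0,1})^2\geq 0$ and not identically zero, so $A_h>0$ is immediate; the off-diagonal terms vanish by odd/even parity of the radial profile (e.g.\ $\partial_{y_i}U_{0,1}$ is odd in $y_i$ while $\partial_\lambda U_{0,1}$ is radial). Non-degeneracy is needed only in Lemma~\ref{blowup}, not here.
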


    \begin{proof}
    It is sufficient to show that for any $(f,g)\in Y_s$, there exist $c_1, c_2, \cdots, c_N$ and a unique pair $(f_0,g_0)\in F$, such that
    $$(f,g)=(f_0,g_0)+\sum\limits_{l=1}^{N}c_l\left(\sum\limits_{j=1}^{m} pZ_{x_j,\lambda}^{p-1} Z_{j,l} , \sum\limits_{j=1}^{m} qY_{x_j,\lambda}^{q-1} Y_{j,l}\right),$$
    which is equivalent to solving the following equations involved $c_l$:
    \begin{equation}\label{c_l_0}
        \begin{gathered}
            \sum\limits_{l=1}^{N}c_l \left\langle \left(\sum\limits_{j=1}^{m} q Y_{x_j,\lambda}^{q-1} Y_{j,l},  \sum\limits_{j=1}^{m} p Z_{x_j,\lambda}^{p-1} Z_{j,l} \right), \left(\sum\limits_{j=1}^{m}\bar{Y}_{j,h},\sum\limits_{j=1}^{m}\bar{Z}_{j,h} \right)  \right\rangle= \left\langle \left(\sum\limits_{j=1}^{m}  \bar{Z}_{j,h},  \sum\limits_{j=1}^{m} \bar{Y}_{j,h} \right), (f,g) \right\rangle,
            \\  h=1,2,\cdots,N.
        \end{gathered}
    \end{equation}
     \textbf{Estimate the left hand side of \eqref{c_l_0}}. Since
    \begin{align*}
            -\Delta \frac{\partial ( \xi U_{x_j,\lambda})}{\partial \Box_h}=\frac{\partial }{\partial \Box_h}\left(  - U_{x_j, \lambda}\Delta \xi - 2 \nabla \xi \nabla U_{x_j, \lambda} + \xi V_{x_j,\lambda}^p \right), \\
           -\Delta \frac{\partial (\xi V_{x_j,\lambda})}{\partial \Box_h}= \frac{\partial }{\partial \Box_h}\left(  - V_{x_j, \lambda}\Delta \xi - 2 \nabla \xi \nabla V_{x_j, \lambda} + \xi U_{x_j,\lambda}^p \right),
    \end{align*}
    where $\Box_h$ denotes $\lambda$ if $h=1$, $\bar{r}$ if $h=2$ and $\bar{y}_ k''$ if $h\geq3$, we have
    \[
        (-\Delta+V) \left(\bar{Y}_{j,h}-\frac{\partial (\xi U_{x_j,\lambda})}{\partial \Box_h} \right)=\frac{\partial}{\partial \Box_h}\left( \xi^p V_{x_j,\lambda}^p - \xi V_{x_j,\lambda}^p + U_{x_j,\lambda}\Delta \xi + 2 \nabla \xi \nabla U_{x_j, \lambda} \right) - V \xi \frac{\partial U_{x_j,\lambda}}{\partial \Box_h},
    \]
    and
    \[
        (-\Delta+V) \left( \bar{Z}_{j,h}-\frac{\partial (\xi V_{x_j,\lambda})}{\partial \Box_h} \right)=\frac{\partial}{\partial \Box_h}(\xi^q U_{x_j,\lambda}^q - \xi U_{x_j,\lambda}^q + V_{x_j,\lambda}\Delta \xi + 2 \nabla \xi \nabla V_{x_j, \lambda}) -V\xi\frac{\partial V_{x_j,\lambda}}{\partial \Box_h}.
    \]
    Denote $A := \{ y = (y',y''): \delta < |(y',y'') - (r_0,y_0'')| <2\delta \}$ and $\textbf{B}_{\delta}:=\{ y = (y',y''): |(y',y'') - (r_0,y_0'')| <\delta \}$
    Then, we have
    \begin{equation*}
        \begin{aligned}
            \left|\bar{Y}_{j,h}(y)-\frac{\partial (\xi  U_{x_j,\lambda})}{\partial \Box_h}(y)\right|&\leq
            \int_{\R^N}\frac{C\lambda^{n_h}}{|y-z|^{N-2}}\left(  \frac{\lambda^{\frac{pN}{p+1}}|\xi^p-\xi|}{(1+\lambda|z-x_j|)^{p(N-2)}}+\frac{\chi_{\textbf{B}_{2\delta}}(z)\lambda^{\frac{N}{q+1}}}{(1+\lambda|z-x_j|)^{N-2}} \right)\dz,
        \end{aligned}
    \end{equation*}
    and
    \begin{equation*}
        \begin{aligned}
            \left|\bar{Z}_{j,h}(y)-\frac{\partial (\xi  V_{x_j,\lambda})}{\partial \Box_h}(y)\right|&\leq
            \int_{\R^N}\frac{C\lambda^{n_h}}{|y-z|^{N-2}}\left(  \frac{\lambda^{\frac{qN}{q+1}}|\xi^q-\xi|}{(1+\lambda|z-x_j|)^{q(N-2)}}+\frac{\chi_{\textbf{B}_{2\delta}}(z)\lambda^{\frac{N}{p+1}}}{(1+\lambda|z-x_j|)^{N-2}} \right)\dz.
        \end{aligned}
    \end{equation*}
    If $y\in \textbf{B}_{\delta/2}$, we have $|y - z| \approx |y-x_j| \approx 1+|y-x_j| $, therefore
    \begin{equation*}
        \begin{aligned}
            \int_{\R^N}\frac{C\lambda^{n_h}}{|y-z|^{N-2}}\frac{\lambda^{\frac{pN}{p+1}}|\xi^p-\xi|}{(1+\lambda|z-x_j|)^{p(N-2)}}\dz&\leq \frac{C\lambda^{n_h+ \frac{pN}{p+1}}}{\lambda^{N}(1+|y-x_j|)^{N-2}} \int_{\lambda (A + \{x_j\})}\frac{\dz}{(1+|z|)^{p(N-2)}}\\
            & \leq \frac{C\lambda^{n_h}}{\lambda^{\frac{N}{p+1}}(1+|y-x_j|)^{N-2}}
        \end{aligned}
    \end{equation*}
    If $y\in \R^N\backslash \textbf{B}_{\delta/2}$, then, from Lemma \ref{B3},
    \[
        \int_{\R^N}\frac{C\lambda^{n_h}}{|y-z|^{N-2}}\frac{\lambda^{\frac{pN}{p+1}}|\xi^p-\xi|}{(1+\lambda|z-x_j|)^{p(N-2)}}\dz\leq \frac{C\lambda^{n_h+\frac{N}{q+1}}}{(1+\lambda|y-x_j|)^{N-2}}.
    \]
    Therefore,
    \[
        \int_{\R^N}\frac{C\lambda^{n_h}}{|y-z|^{N-2}}\frac{\lambda^{\frac{pN}{p+1}}|\xi^p-\xi|}{(1+\lambda|z-x_j|)^{p(N-2)}}\dz \leq  C\lambda^{n_h}\left(\frac{\lambda^{-\frac{N} {p+1}} \chi_{\textbf{B}_{\delta/2}}}{(1+|y-x_j|)^{N-2}}+\frac{\lambda^{\frac{N}{q+1}}\chi_{\textbf{B}_{\delta/2}^c}}{(1+\lambda|y-x_j|)^{N-2}}\right).
    \]
    Similarly,
    \[
        \int_{\R^N}\frac{C\lambda^{n_h}}{|y-z|^{N-2}}\frac{\chi_{\textbf{B}_{2\delta}}(z)\lambda^{\frac{N}{q+1}}}{(1+\lambda|z-x_j|)^{N-2}}\dz\leq C\lambda^{n_h}\left(\frac{\lambda^{-\frac{N}{p+1}}\chi_{\textbf{B}_{4\delta}^c}}{(1+|y-x_j|)^{N-2}}+\frac{\lambda^{\frac{N}{q+1}-2}\chi_{\textbf{B}_{4\delta}}}{(1+\lambda|y-x_j|)^{N-4}}\right).
    \]
    Thus, we can obtain
    \begin{equation}\label{barY}
        \begin{aligned}
            \quad\left|\bar{Y}_{j,h}(y)-\frac{\partial (\xi  U_{x_j,\lambda})}{\partial \Box_h}(y)\right| \leq C\lambda^{n_h}\left( \frac{\lambda^{\frac{N}{q+1}}\chi_{\textbf{B}_{\delta/2}^c}}{(1+\lambda|y-x_j|)^{N-2}} +\frac{\lambda^{\frac{N}{q+1}-2}\chi_{\textbf{B}_{4\delta}}}{(1+\lambda|y-x_j|)^{N-4}}\right).
        \end{aligned}
    \end{equation}
    Similarly, we have
    \begin{equation}\label{barZ}
        \begin{aligned}
            \left|\bar{Z}_{j,h}(y)-\frac{\partial (\xi V_{x_j,\lambda})}{\partial \Box_h}(y) \right| \leq  C\lambda^{n_h}\left( \frac{\lambda^{\frac{N}{p+1}}\chi_{\textbf{B}_{\delta/2}^c}}{(1+\lambda|y-x_j|)^{N-2}}+\frac{\lambda^{\frac{N}{p+1}-2}\chi_{\textbf{B}_{4\delta}}}{(1+\lambda|y-x_j|)^{N-4}}\right).
        \end{aligned}
    \end{equation}

    Then, using Lemma \ref{B1} and \ref{B2} and \eqref{barY}, \eqref{barZ}, we have
    \begin{align}
            &\quad \left\langle \left( \sum\limits_{j=1}^{m} q Y_{x_j,\lambda}^{q-1} Y_{j,l},  \sum\limits_{j=1}^{m} p Z_{x_j,\lambda}^{p-1} Z_{j,l}), (\sum\limits_{j=1}^{m}\bar{Y}_{j,h},\sum\limits_{j=1}^{m}\bar{Z}_{j,h} \right) \right\rangle  \nonumber \\
            &= \int_{\R^N} \left( \sum_{j=1}^m  \frac{\pa (\xi^q U_{x_j,\lambda}^q)}{\pa \Box_l} \cdot \sum_{j=1}^m\left( \frac{\pa(\xi U_{x_j,\lambda})}{\pa \Box_h}+\bar{Y}_{j,h}- \frac{\pa(\xi U_{x_j,\lambda})}{\pa \Box_h} \right) \right) \nonumber\\
            & \quad + \int_{\R^N}  \left( \sum_{j=1}^m  \frac{\pa (\xi^p V_{x_j,\lambda}^p)}{\pa \Box_l} \cdot \sum_{j=1}^m\left( \frac{\pa(\xi V_{x_j,\lambda})}{\pa \Box_h}+\bar{Z}_{j,h}- \frac{\pa(\xi V_{x_j,\lambda})}{\pa \Box_h} \right) \right) \nonumber\\
            &=\int\limits_{\R^N} \left(\sum\limits_{j=1}^{m} \frac{\partial (V_{x_j,\lambda}^p)}{\partial \Box_l} \cdot \sum\limits_{j=1}^{m}\frac{\partial V_{x_j,\lambda}}{\partial \Box_h} +\int\limits_{\R^N} \sum\limits_{j=1}^{m} \frac{\partial (U_{x_j,\lambda}^q)}{\partial \Box_l} \cdot \sum\limits_{j=1}^{m}\frac{\partial U_{x_j,\lambda}}{\partial \Box_h}\right) \label{LHS^1} \\
            & \quad + Cm \lambda^{n_h + h_l} \sum_{i \neq j} \int_{\textbf{B}_{2\delta} \backslash \textbf{B}_{\delta/2}} \frac{\lambda^N}{(1+\lambda|y-x_j|)^{(N-2)p}} \frac{1}{(1+\lambda|y-x_i|)^{N-2} } \ \dy \nonumber \\
            & \quad + Cm \lambda^{n_h + h_l} \sum_{i \neq j}  \int_{\textbf{B}_{2\delta} } \frac{\lambda^{N-2}}{(1+\lambda|y-x_j|)^{(N-2)p}} \frac{1}{(1+\lambda|y-x_i|)^{N-4} } \ \dy \nonumber\\
            & \quad + Cm \lambda^{n_h + h_l} \sum_{i \neq j}  \int_{\textbf{B}_{\delta}^c } \frac{\lambda^{N}}{(1+\lambda|y-x_j|)^{(N-2)p}} \frac{1}{(1+\lambda|y-x_j|)^{N-2}} \ \dy \nonumber \\
            &=\delta_{hl}m\lambda^{2n_h}a_h+o(m\lambda^{n_l+n_h-2}), \nonumber
        \end{align}
    for some $a_h>0$, $h=1,2,\cdots,N$.

    \textbf{Estimate the right hand side of \eqref{c_l_0}}. Using Lemma \ref{B2} and \eqref{barY}, we have
    \begin{equation*}
        \begin{aligned}
            &\quad \left|\int_{\R^N}\sum\limits_{j=1}^{m} \bar{Y}_{j,h}\cdot g \right|\\
            &\leq C m\lambda^{n_h}||g||_{**,2} \int_{\R^N}\left(\frac{\lambda^N}{(1+\lambda|y-x_{1}|)^{N-2}}+\frac{\lambda^{N}\chi_{\textbf{B}_{4\delta}}}{\lambda^{\varepsilon}(1+\lambda|y-x_{1}|)^{N-2-\varepsilon}}\right)
            \cdot \sum\limits_{j=1}^{m}\frac{\dy}{(1+\lambda|y-x_{j}|)^{\frac{N}{q+1}+2+\tau}} \\
            &\leq Cm\lambda^{n_h}||g||_{**,2}
        \end{aligned}
    \end{equation*}
    where we have used the fact that
    \[
        \frac{1}{\lambda}\leq C\frac{1}{1+\lambda|y-x_1|},\quad\text{ in}\,\,\textbf{B}_{4\delta}.
    \]
   Similarly, we have
   \[
       \left| \int_{\R^N}\sum\limits_{j=1}^{m} \bar{Z}_{j,h}\cdot f \right| \leq Cm\lambda^{n_h}||f||_{**,1}.
    \]
   Thus, we have
   \begin{align}\label{RHS_1}
   \left|\left\langle (\sum\limits_{j=1}^{m}  \bar{Y}_{j,h},  \sum\limits_{j=1}^{m} \bar{Z}_{j,h}), (f,g)\right\rangle\right|\leq Cm\lambda^{n_h}||(f,g)||_{**}.
   \end{align}
    Inserting \eqref{LHS^1} and \eqref{RHS_1} into \eqref{c_l_0}, we conclude that \eqref{c_l_0} is solvable and
   $$c_h=O\left(\frac{||(f,g)||_{**}}{\lambda^{n_h}}\right).$$

    \textbf{Prove that P is bounded from $Y_s$ to $F$.} Note that
        \begin{equation*}
            \begin{aligned}
                \left|\left|\sum\limits_{j=1}^{m} pZ_{x_j,\lambda}^{p-1} Z_{j,l}\right|\right|_{**,1}&\leq
                C\lambda^{n_l}\sup\limits_{y\in \R^N}\left(\sum\limits_{j=1}^{m}\frac{1}{(1+\lambda|y-x_{j}|)^{\frac{N}{q+1}+2+\tau}}\right)^{-1}\cdot
                \sum\limits_{j=1}^{m}\frac{1}{(1+\lambda|y-x_{j}|)^{(N-2)p}}.
            \end{aligned}
        \end{equation*}
        Define
        \begin{equation}\label{Omega}
            \Omega_j = \left\{ y = (y',y'') \in \R^2 \times \R^{N-2}, \langle \dfrac{y'}{|y'|} , \dfrac{x_j'}{|x_j'|} \rangle \geq \cos \dfrac{\pi}{m}\right\}, \;\; j=1,2,\cdots,N.
        \end{equation}
        Then, for any $y \in \Omega_i$, and $\zeta \geq \tau$, we have
        \begin{equation}\label{I_32_2}
            \begin{aligned}
                \sum\limits_{j=1}^m \frac{1}{(1+\lambda|y-x_j|)^{\zeta}}
                &\leq \frac{1}{(1+\lambda|y-x_1|)^{\zeta}}+\frac{1}{(1+\lambda|y-x_1|)^{\zeta-\tau}}\sum\limits_{j=2}^m \frac{1}{(\lambda|x_1-x_j|)^{\tau}}\\
                &\leq  \frac{C}{(1+\lambda|y-x_1|)^{\zeta-\tau}}.
            \end{aligned}
        \end{equation}
       Hence
        \[
            \left|\left|\sum\limits_{j=1}^{m} pZ_{x_j,\lambda}^{p-1} Z_{j,l}\right|\right|_{**,1} \leq C\lambda^{n_l}\max\limits_{1\leq i\leq m}\sup\limits_{y\in \Omega_i} \left(\sum\limits_{j=1}^{m}\frac{(1+\lambda|y-x_{i}|)^{(N-2)p-\tau}}{(1+\lambda|y-x_{j}|)^{\frac{N}{q+1}+2+\tau}}\right)^{-1} \leq C \lambda^{n_l},
        \]
        where in the second inequality, we have used $(N-2)p-\tau>\frac{N}{q+1}+2+\tau$ when $p>\frac{N}{N-2}$ and $N \geq 6$, or $p \geq 2$ and $N=5$. Similarly, we have
        \[
            \left| \left|\sum\limits_{j=1}^{m} qY_{x_j,\lambda}^{q-1} Y_{j,l}\right|\right|_{**,2}\leq C \lambda^{n_l}.
        \]

        As a result, we obtain
        \begin{equation*}
            \begin{aligned}
                 \left|\left|\sum\limits_{l=1}^{N} c_l\big(\sum\limits_{j=1}^{m} pZ_{x_j,\lambda}^{p-1} Z_{j,l}, \sum\limits_{l=1}^{N} c_l \sum\limits_{j=1}^{m} qY_{x_j,\lambda}^{q-1} Y_{j,l}\big)\right|\right|_{**} &\leq \sum\limits_{l=1}^{N} c_l \left(\left| \left| \sum\limits_{j=1}^{m} pZ_{x_j,\lambda}^{p-1} Z_{j,l} \right| \right|_{**,1} + \left| \left|\sum\limits_{j=1}^{m} qY_{x_j,\lambda}^{q-1} Y_{j,l} \right| \right|_{**,2}\right)\\
                 &\leq O(||(f,g)||_{**})
            \end{aligned}
        \end{equation*}
        which implies that
        $$||\textbf{P}(f,g)||_{**}\leq C ||(f,g)||_{**}$$

    \end{proof}

\begin{prop}\label{existence1}
    There is a $m_0>0$ such that for any $m > m_0$, and all $(h_1,h_2) \in Y_s$, the problem \eqref{linear-equation} has a unique solution $(\phi,\psi) : = L_m(h_1,h_2)$, which satisfies
    \[
        ||L_m(h_1,h_2)||_{*} \leq C||(h_1,h_2)||_{**}, \;\; |c_{l}| \leq \dfrac{C}{\lambda^{n_l}} ||(h_1,h_2)||_{**}.
    \]
\end{prop}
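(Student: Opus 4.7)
The plan is to reduce \eqref{linear-equation} to an equation of the form $(I-\mathcal{K})(\phi,\psi)=\mathcal{H}$ on the Banach space $E$ with $\mathcal{K}$ compact, and then invoke the Fredholm alternative using the a priori estimate of Lemma \ref{blowup}. First I would apply $\mathbf{(-\Delta)^{-1}}$ to both coordinates of \eqref{linear-equation} and compose with the projection $\textbf{P}$ of Lemma \ref{prop1}. Since the correction $\sum_l c_l\bigl(\sum_j pZ_{x_j,\lambda}^{p-1}Z_{j,l},\ \sum_j qY_{x_j,\lambda}^{q-1}Y_{j,l}\bigr)$ lies in $\bar F=\ker\textbf{P}$, the resulting equation becomes
$$(\phi,\psi)\;=\;\mathcal{K}(\phi,\psi)\;+\;\mathcal{H},\qquad \mathcal{K}(\phi,\psi):=\textbf{P}\,\mathbf{(-\Delta)^{-1}}\bigl(pZ_{\bar r,\bar y'',\lambda}^{p-1}\psi,\ qY_{\bar r,\bar y'',\lambda}^{q-1}\phi\bigr),\ \ \mathcal{H}:=\textbf{P}\,\mathbf{(-\Delta)^{-1}}(h_1,h_2),$$
posed on the $H_s$-symmetric subspace $E\subset X_s$.

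Next I would verify the mapping properties. The integral bounds \eqref{omega1-1}, \eqref{omega1-2}, \eqref{omega2-1} and \eqref{omega2-2} from the proof of Lemma \ref{blowup} already show that $\mathbf{(-\Delta)^{-1}}$ sends the $\|\cdot\|_{**}$-norm continuously into the $\|\cdot\|_*$-norm, and Lemma \ref{prop1} provides the boundedness of $\textbf{P}$. Combined, these give $\|\mathcal{K}(\phi,\psi)\|_*\le C\|(\phi,\psi)\|_*$ and $\|\mathcal{H}\|_*\le C\|(h_1,h_2)\|_{**}$.

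I would then prove the compactness of $\mathcal{K}$ on $E$. For $R>0$ large I split the source $\bigl(pZ^{p-1}\psi,qY^{q-1}\phi\bigr)$ into its restriction to $\Omega_R:=\bigcup_{j=1}^m B_{R/\lambda}(x_j)$ and its complement. On $\Omega_R$, standard elliptic regularity together with Arzel\`a--Ascoli (after rescaling near each $x_j$) yields precompactness on compact subsets, and the weighted decay in $\|\cdot\|_*$ controls the tail at infinity. On $\R^N\setminus\Omega_R$, the factors $Z_{x_j,\lambda}^{p-1}$ and $Y_{x_j,\lambda}^{q-1}$ exhibit fast polynomial decay; the hypothesis $p>\frac{N+1}{N-2}$ guarantees that these decay rates strictly dominate the target weights $\frac{N}{q+1}+\tau$ and $\frac{N}{p+1}+\tau$, so the tail contribution to $\mathcal{K}$ vanishes in operator norm as $R\to\infty$. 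The $H_s$-symmetry further reduces the compactness check to a single bubble region.

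Finally, the Fredholm alternative applied to the compact perturbation $I-\mathcal{K}$ on $E$ reduces existence and uniqueness to injectivity. If $(\phi,\psi)\in\ker(I-\mathcal{K})$, lifting back yields a solution of \eqref{linear-equation} with $(h_1,h_2)=0$ and uniquely determined multipliers $c_l$ (from the decomposition $Y_s=F\oplus\bar F$ in Lemma \ref{prop1}); Lemma \ref{blowup} then forces $(\phi,\psi)=0$. Hence \eqref{linear-equation} has a unique solution $L_m(h_1,h_2)\in E$. The norm estimate $\|L_m(h_1,h_2)\|_*\le C\|(h_1,h_2)\|_{**}$ is a direct consequence of Lemma \ref{blowup}, while the bound $|c_l|\le C\lambda^{-n_l}\|(h_1,h_2)\|_{**}$ is exactly \eqref{c} from its proof. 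The main obstacle I foresee is the compactness step: verifying compactness of $\mathcal{K}$ in the weighted norm on a non-compact $\R^N$ with a number $m$ of bubble centers tending to infinity is delicate, and the decay condition $p>\frac{N+1}{N-2}$ together with the $H_s$-symmetry are precisely what enable the tail estimate and the local Arzel\`a--Ascoli argument.
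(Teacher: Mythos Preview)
Your strategy coincides with the paper's: reduce \eqref{linear-equation} to a fixed-point equation $(I-T)(\phi,\psi)=\mathcal{H}$ on $E$ with $T$ compact, then apply the Fredholm alternative together with Lemma \ref{blowup} for injectivity and the a priori bounds, and read off the $c_l$ estimate from \eqref{c}. The only correction needed is the composition order---the paper uses $\mathbf{(-\Delta)^{-1}}\circ\textbf{P}$ (first project in $Y_s$ so the correction term in $\bar F$ is annihilated, then invert, which by construction maps $F$ into $E$), whereas your $\textbf{P}\circ\mathbf{(-\Delta)^{-1}}$ would have $\textbf{P}$ acting on $X_s$ rather than on $Y_s$ where it is defined.
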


\begin{proof}
    Since $(\phi, \psi) \in E$, we know that $(-\Delta \phi + V \phi, -\Delta \psi + V\psi)$ satisfies
    \[
        \begin{split}
            \left\langle (-\Delta \phi + V \phi, -\Delta \psi + V\psi) , \sum_{j=1}^m(\bar{Z}_{j,l}, \bar{Y}_{j,l}) \right\rangle & = \sum_{j=1}^m \int_{\R^N} \left( \phi (-\Delta + V) \bar{Z}_{j,l} + \psi (-\Delta + V) \bar{Y}_{j,l} \right) \ \dx \\
            & = \sum_{j=1}^m \int_{\R^N} (qY_{x_j,\lambda}^{q-1}Y_{j,l} \phi + p Z_{x_j,\lambda}^{p-1}Z_{j,l} \psi) \ \dx =0.
        \end{split}
    \]
    Therefore $(-\Delta \phi + V \phi, -\Delta \psi + V\psi) \in F$. Applying \textbf{P} on both sides of \eqref{linear-equation}, we obtain
    \[
        (\phi,\psi)= (-\Delta+V)^{-1}\cdot\textbf{P}(p(Z_{\bar{r}, \bar{y}'', \lambda})^{p-1} \psi,  q(Y_{\bar{r}, \bar{y}'', \lambda})^{q-1} \phi)+ (-\Delta+V)^{-1}\cdot\textbf{P}(h_1,h_2),\quad (\phi,\psi)\in E.
    \]

    On the other hand, we can deduce from Lemma \ref{nnl2-19-1} and Lemma \ref{B1_1} that
    \[
        \left|q(Y_{\bar{r}, \bar{y}'', \lambda})^{q-1} \phi\right|\leq C\lambda^{\frac{qN}{q+1}}||\phi||_{*,1}\cdot
         \left(\sum_{j=1}^k \frac{1}{(1+\lambda|y-x_j|)^{\frac{N}{q+1}+\tau}}\right)^{q} \le C||\phi||_{*,1}\sum_{j=1}^k \frac{ \lambda^{\frac{N}{p+1}+2} }{ (1+\lambda |y-x_j| )^{  \frac{N}{p+1}+2+\tau } },
    \]
    and similarly,
    \[
         \left|p(Z_{\bar{r}, \bar{y}'', \lambda})^{p-1} \psi\right|\leq
          C||\psi||_{*,2}\sum_{j=1}^k \frac{ \lambda^{\frac{N}{q+1}+2} }{ (1+\lambda |y-x_j| )^{  \frac{N}{q+1}+2+\tau } },
    \]
    which implies that
    \[
        \left|\left|\left(p(Z_{\bar{r}, \bar{y}'', \lambda})^{p-1} \psi,  q(Y_{\bar{r}, \bar{y}'', \lambda})^{q-1} \phi\right)\right|\right|_{**}\leq C||(\phi,\psi)||_{*}.
    \]
    Define the operator
    \[
        T(\phi,\psi)=(-\Delta+V)^{-1}\cdot\textbf{P}(p(Z_{\bar{r}, \bar{y}'', \lambda})^{p-1} \psi,  q(Y_{\bar{r}, \bar{y}'', \lambda})^{q-1} \phi).
    \]
    Then, $T$ is a bounded linear operator from $E$ to itself. Moreover, we can derive from the fact that $|Y_{\bar{r}, \bar{y}'', \lambda}(y)|+|Z_{\bar{r}, \bar{y}'', \lambda}(y)|\rightarrow 0$ as $|y|\rightarrow+\infty$ and  the Arzel\`a-Ascoli Theorem that $T$ is compact.

    On the other hand, Lemma \ref{blowup} guarantees that the homogeneous equation
    $$(\phi,\psi)=T(\phi,\psi)$$
    has only the zero solution in $E$, which implies that $I-T$ is an injection. Thus, Fredholm's alternative gives that $I-T$ is a bijection in $E$. Therefore, there is a unique solution $(\phi, \psi)$ of \eqref{linear-equation}. Furthermore, using Lemma \ref{blowup} and \eqref{c}, we have
    $$||(\phi,\psi)||_{*} \leq C||(h_1,h_2)||_{**}, \ \ |c_{l}| \leq \dfrac{C}{\lambda^{n_l}} ||(h_1,h_2)||_{**}.$$
\end{proof}

\vskip8pt

Now we consider the following linearized problem:
\begin{equation}\label{linear-equation2}
   \begin{cases}
      L(\phi,\psi) = l+ N(\phi,\psi) + \sum\limits_{l=1}^{N} c_l ( \sum\limits_{j=1}^{m} pZ_{x_j,\lambda}^{p-1} Z_{j,l}, \sum\limits_{j=1}^{m} qY_{x_j,\lambda}^{q-1} Y_{j,l})\;\;\; \hbox{in} \;\;\; \R^N,\\
      (\phi, \psi) \in E ,
   \end{cases}
\end{equation}
where $l$ and $N(\phi,\psi)$ are defined as \eqref{op-3} and \eqref{op-4}. We have

\begin{prop}\label{existence2}
    There is a $m_0>0$ such that if $m>m_0$, $\lambda \in [L_0m^{\frac{N-2}{N-4}}, L_1m^{\frac{N-2}{N-4}}]$, $\bar{r} \in [r_0-\theta, r_0 + \theta]$ and $\bar{y}''\in B_{\theta}(y_0'')$, where $\theta >0$ is small, \eqref{linear-equation2} has a unique solution $(\phi,\psi)=(\phi_{\bar{r},\bar{y}'',\lambda}, \psi_{\bar{r},\bar{y}'',\lambda})$ satisfying
    \begin{align}\label{phi_psi_c}
         ||(\phi,\psi)||_{*} \leq \dfrac{C}{\lambda^{1+\varepsilon}}, \;\; |c_l| \leq \dfrac{C}{\lambda^{n_l+1+\varepsilon}},
    \end{align}
    where $\varepsilon>0$ is a small constant.
\end{prop}

To prove this proposition, we need to estimate $l$ and $N(\phi,\psi)$ respectively. For later use, we define the following set:
\begin{equation}\label{eq:S}
    S := B_{r_0^{-1}m^{-1}}(x_1) = \left\{y \in \R^N: |y-x_1| < r_0^{-1}m^{-1}\right\}.
\end{equation}
We choose $r_0>0$ to be a large constant such that $S \in \Omega_1$. In the proof of Lemma \ref{l1-23-4}, we obtain $\sum\limits_{j=2}^m U_{x_j,\lambda} + \varphi \le CU_{x_1,\lambda}$ in $S$, where $\varphi$ is defined as $\varphi = Y_{\bar{r}, \bar{y}'', \lambda}^* - \sum\limits_{j=1}^m U_{x_j,\lambda}$.

\begin{lemma}\label{N}
    Suppose $N\geq 6$, $p\in(\frac{N}{N-2},\frac{N+2}{N-2})$ or $N=5$, $p\in (2,\frac{7}{3})$ and $(p,q)$ satisfies condition \eqref{critical hyperbola}, then for $||(\phi,\psi)||_*$ small enough, it holds that
    \[
        ||N(\phi,\psi)||_{**} \leq C||(\phi,\psi)||_{*}^{\min\{p,2\}}
    \]
\end{lemma}

\begin{proof}
   Recall that $N_2(\phi) = |Y_{\bar{r},\bar{y}'',\lambda} + \phi|^{q-1}(Y_{\bar{r},\bar{y}'',\lambda} + \phi) - (Y_{\bar{r},\bar{y}'',\lambda})^q - q (Y_{\bar{r},\bar{y}'',\lambda})^{q-1}\phi$. Thus, we have
\[ |N_2(\psi)| \leq
\begin{cases}
   C|\phi|^q, \;\;\; \hbox{if}\;\; q \leq 2 ;\\
   C(Y_{\bar{r},\bar{y}'',\lambda})^{q-2}\phi^2 + C|\phi|^q \;\; \hbox{if}\;\; q > 2.
\end{cases}
\]
Note that the case $q>2$ occurs only if $N\leq 8$.

First, we can derive from Lemma \ref{B1_1} that
\[
\begin{split}
    |\phi|^{q}&\leq \lambda^{\frac{qN}{q+1}} ||\phi||_{*,1}^{q} \left(\sum\limits_{j=1}^{m} \dfrac{1}{(1+\lambda|y-x_j|)^{\frac{N}{q+1}+\tau}}\right)^{q}\\
    &\leq C ||\phi||_{*,1}^{q} \sum\limits_{j=1}^{m}\dfrac{\lambda^{\frac{N}{p+1}+2}}{(1+\lambda|y-x_j|)^{\frac{N}{p+1}+2+\tau}}.
\end{split}
\]
If $q>2$, from Lemma \ref{B1_1} and Lemma \ref{l1-23-4}, we estimate that
\[
\begin{split}
    \big|(Y_{\bar{r},\bar{y}'',\lambda})^{q-2}\phi^2\big|&\leq C||\phi||_{*,1}^{2}
    \left(\sum\limits_{j=1}^{m} \dfrac{\lambda^{\frac{N}{q+1}}}{(1+\lambda|y-x_j|)^{\frac{N}{q+1}+\tau+\theta}} \right)^{q-2}\left(\sum\limits_{j=1}^{m} \dfrac{\lambda^{\frac{N}{q+1}}}{(1+\lambda|y-x_j|)^{\frac{N}{q+1}+\tau}}\right)^2\\
    &\leq C||\phi||_{*,1}^{2}\left(\sum\limits_{j=1}^{m} \dfrac{\lambda^{\frac{N}{q+1}}}{(1+\lambda|y-x_j|)^{\frac{N}{q+1}+\tau}}\right)^q\\
    &\leq  C||\phi||_{*,1}^{2} \sum\limits_{j=1}^{m}\dfrac{\lambda^{\frac{N}{p+1}+2}}{(1+\lambda|y-x_j|)^{\frac{N}{p+1}+2+\tau}}.
\end{split}
\]
Hence, we have
$$\big|N_2(\phi)\big|\leq C||\phi||_{*,1}^{\min \{q,2\}} \sum\limits_{j=1}^{m}\dfrac{\lambda^{\frac{N}{p+1}+2}}{(1+\lambda|y-x_j|)^{\frac{N}{p+1}+2+\tau}}.$$
Similarly,
$$  
    |N_1(\psi)| \leq \begin{cases}
    C|\psi|^p \;\; p \leq 2 ;\\
    C |Z_{\bar{r},\bar{y}'',\lambda}|^{p-2}|\psi|^2 + C|\psi|^p \;\; \hbox{if}\;\; p > 2.
\end{cases}
$$
Note that the case $p>2$ occurs only when $N=5$ and $p \in (2, \frac{7}{3} ]$.
Thus, we have
$$\big|N_1(\psi)\big|\leq C||\psi||_{*,2}^{\min \{p,2\}} \sum\limits_{j=1}^{m}\dfrac{\lambda^{\frac{N}{q+1}+2}}{(1+\lambda|y-x_j|)^{\frac{N}{q+1}+2+\tau}}.$$
Since $q \geq p$ and we assume that $||(\phi, \psi)||_{*}$ is small enough, we have
\[
   ||N(\phi, \psi)||_{**} \leq C||(\phi,\psi)||_{*}^{min(p,2)}.
\]
\end{proof}

\begin{lemma}\label{l}
   Under the same condition as in Lemma \ref{N}, we have
   \[
       ||(l_1, l_2)||_{**} \leq \dfrac{C}{\lambda^{1+\varepsilon}}.
   \]
where $\varepsilon > 0$ is a fixed small constant.
\end{lemma}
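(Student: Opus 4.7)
The plan is to split $l_1$ (and symmetrically $l_2$) into three parts according to its definition in \eqref{op-3}:
\begin{equation*}
l_1 = \underbrace{Z_{\bar{r}, \bar{y}'', \lambda}^p - \xi \sum_{j=1}^m V_{x_j,\lambda}^p}_{\textup{interaction}}\;+\;\underbrace{(-V\, Y_{\bar{r}, \bar{y}'', \lambda})}_{\textup{potential}}\;+\;\underbrace{Z_{\bar{r}, \bar{y}'', \lambda}^*\,\Delta\xi + 2\nabla\xi\cdot\nabla Z_{\bar{r}, \bar{y}'', \lambda}^*}_{\textup{cutoff error}},
\end{equation*}
and estimate each piece in the weighted $\|\cdot\|_{**,1}$ norm, then do the symmetric analysis for $l_2$ in $\|\cdot\|_{**,2}$. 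Since for $|y-x_j|\ge c>0$ the bubbles $U_{x_j,\lambda}, V_{x_j,\lambda}$ decay like $\lambda^{-(N-2)}$ times a constant power of $\lambda$, the only regions that can possibly saturate the $**$ norm are small $O(1/\lambda)$ neighborhoods of the centers $x_j$; the whole calculation is therefore a bookkeeping comparison of pointwise decay rates against the weight $\lambda^{N/(q+1)+2}\sum_j (1+\lambda|y-x_j|)^{-(N/(q+1)+2+\tau)}$ (and its $p$-analogue), using Lemmas \ref{B1}, \ref{B2}, \ref{B4} freely.

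For the \emph{interaction term}, in the Voronoi cell $\Omega_k=\{y:|y-x_k|\le|y-x_j|\;\forall j\}$ I write $Z_{\bar r,\bar y'',\lambda} = \xi(V_{x_k,\lambda}+W_k)$ with $W_k=\sum_{j\neq k}V_{x_j,\lambda}$ and apply the elementary inequality $|(a+b)^p-a^p-b^p|\le C a^{p-1}b+Cb^p$ when $p\le 2$ (the case $p>2$ only occurs for $N=5$, $p\in(2,7/3]$, and is split into the same $a^{p-1}b^2$ and $b^p$ cross terms as in Lemma \ref{N}). This reduces the interaction to the cross terms $V_{x_k,\lambda}^{p-1}V_{x_j,\lambda}$, $j\neq k$, and to $W_k^p$. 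Because $p\cdot\tfrac{N}{p+1}=\tfrac{N}{q+1}+2$ (from the critical hyperbola), Lemma \ref{B4} produces exactly the target weight, and the remaining smallness comes from the factor $(\lambda|x_k-x_j|)^{-(N-2)}$ summed over $j\neq k$: since $|x_i-x_j|\sim \bar r(j-i)/m$ and $\lambda\sim m^{(N-2)/(N-4)}$, one has $\lambda|x_k-x_j|\gtrsim m^{2/(N-4)}\to\infty$, so $\sum_{j\ge 2}(\lambda|x_1-x_j|)^{-(N-2)}\le C (m/\lambda)^{N-2}\le C\lambda^{-1-\varepsilon}$ for some $\varepsilon>0$ when $N\ge 5$ (this is where the lower bound on $\lambda$ is used crucially).

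For the \emph{potential term}, since $V\in C^1$ is bounded and $Y_{\bar r,\bar y'',\lambda}=\sum_j\xi U_{x_j,\lambda}$ is supported in $\{|(|y'|,y'')-(r_0,y_0'')|\le 2\delta\}$, I can pointwise bound $|E_2|\le C\sum_j\lambda^{N/(q+1)}(1+\lambda|y-x_j|)^{-(N-2)}$. Dividing by the target weight gives a ratio $\le C\lambda^{-2}(1+\lambda|y-x_j|)^{N/(q+1)+2+\tau-(N-2)}=C\lambda^{-2}(1+\lambda|y-x_j|)^{\,\tau-(N/(p+1))}$. Since $\tau=\tfrac{N-4}{N-2}<1$, $N/(p+1)\ge (N-2)/2$, and $1+\lambda|y-x_j|\lesssim\lambda$ on the support, this ratio is at most $C\lambda^{-1-\varepsilon}$. (The assumption $p>\tfrac{N+1}{N-2}$ from condition (A) is what guarantees a strictly positive $\varepsilon$.)

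For the \emph{cutoff error}, $\Delta\xi$ and $\nabla\xi$ are supported on $\{\delta\le|(|y'|,y'')-(r_0,y_0'')|\le 2\delta\}$, so (using $\vartheta\ll\delta$) we have $|y-x_j|\ge c>0$ uniformly. Direct computation gives $|Z^*_{\bar r,\bar y'',\lambda}|+|\nabla Z^*_{\bar r,\bar y'',\lambda}|\le Cm\lambda^{-N/(q+1)}$ on this annulus, while the target weight there is comparable to $m\lambda^{-\tau}$. The ratio is $\lesssim\lambda^{\tau-N/(q+1)}$, which is $\le C\lambda^{-1-\varepsilon}$ precisely when $N/(q+1)>1+\tau=(2N-6)/(N-2)$; this is equivalent to $N/(p+1)<(N-2)-(2N-6)/(N-2)$, which is exactly what condition (A), i.e., $p>(N+1)/(N-2)$, ensures. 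Assembling the three bounds proves $\|l_1\|_{**,1}\le C\lambda^{-1-\varepsilon}$, and the identical argument (swapping $p\leftrightarrow q$, $U\leftrightarrow V$, $(q+1)\leftrightarrow(p+1)$) gives $\|l_2\|_{**,2}\le C\lambda^{-1-\varepsilon}$. The main obstacle is the first piece: one must track very carefully that the cross-bubble interaction can be distributed into the sum over $j$ defining the target weight (this is the content of Lemma \ref{B4}) and then verify the arithmetic $p\cdot\tfrac{N}{p+1}=\tfrac{N}{q+1}+2$ matches the weight exactly, so that the extra smallness genuinely comes from the spacing $\lambda/m\to\infty$ rather than from the bubble profile itself.
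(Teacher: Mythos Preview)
Your decomposition into interaction, potential, and cutoff pieces is exactly the paper's strategy (they label the four summands $H_1,\dots,H_4$, grouping your ``cutoff error'' as $H_3+H_4$). The potential and cutoff estimates are essentially the same as the paper's, though your identity $N/(q+1)+2+\tau-(N-2)=\tau-N/(p+1)$ is off by $2$; the correct exponent is $2+\tau-N/(p+1)$, which is \emph{positive}, so you really do need the bound $1+\lambda|y-x_j|\lesssim\lambda$ on $\supp\xi$ to recover $\lambda^{\tau-N/(p+1)}\le\lambda^{-1-\varepsilon}$.

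There is, however, a genuine gap in your interaction estimate. First, Lemma~\ref{B4} is an \emph{integral} bound (convolution against the Newtonian kernel) and does not give the pointwise control needed here; the right tool is the splitting Lemma~\ref{B2}. Second, and more seriously, you claim the smallness comes from $(\lambda|x_k-x_j|)^{-(N-2)}$, which corresponds to taking $\sigma=N-2$ in Lemma~\ref{B2}. But Lemma~\ref{B2} requires $\sigma\le\min\{(p-1)(N-2),\,N-2\}$, and for $N\ge 6$ one has $p\le 2$, hence $(p-1)(N-2)<N-2$, so $\sigma=N-2$ is \emph{not admissible}. If instead you take the maximal allowed $\sigma=(p-1)(N-2)$, the residual decay is only $(N-2)$, which falls short of the required $\tfrac{N}{q+1}+2+\tau$. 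The paper resolves this by choosing $\alpha$ in the interval $\bigl(\tfrac{N-2}{2},\,(N-2)p-\tfrac{N}{q+1}-\tau-2\bigr]$; the lower bound guarantees $\sum_{j\ge2}(\lambda|x_1-x_j|)^{-\alpha}\le C\lambda^{-1-\varepsilon}$, the upper bound guarantees the residual decay matches the weight, and the interval is nonempty precisely because $p>\tfrac{N+1}{N-2}$. You should also explicitly account for the term $(\xi^p-\xi)V_{x_1,\lambda}^p$ arising from $\xi^p\ne\xi$; it is supported on the annulus and handled like the cutoff error, but it does not disappear automatically from your expansion.
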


\begin{proof}
   Recall that
   \begin{equation*}
    \begin{split}
        l= \big(l_1,l_2\big)=  & \Big( ( Z_{\bar{r}, \bar{y}'', \lambda}^p - \xi (Z_{\bar{r}, \bar{y}'', \lambda}^*)^p ) - VY_{\bar{r}, \bar{y}'', \lambda} + ( Y_{\bar{r}, \bar{y}'', \lambda}^* \Delta \xi + 2\nabla \xi \nabla Y_{\bar{r}, \bar{y}'', \lambda}^* ) , \\
        & (Y_{\bar{r}, \bar{y}'', \lambda}^q - \xi \sum\limits_{j=1}^m U_{x_j,\lambda}^q) - VZ_{\bar{r}, \bar{y}'', \lambda} + (Z_{\bar{r}, \bar{y}'', \lambda}^* \Delta \xi + 2\nabla \xi \nabla Z_{\bar{r}, \bar{y}'', \lambda}^*) \Big)\\
        & : = (H_{1}+H_{2}+H_{3}, K_{1}+K_{2}+K_{3})
    \end{split}
    \end{equation*}
    We will estimate $H_1,H_2,H_3,K_1,K_2,K_3$ respectively. We use the notation in Lemma \ref{prop1}, i.e. $A = \{ y = (y',y''): \delta < |(y',y'') - (r_0,y_0'')| <2\delta \}$, and $\text{supp}(\xi)=\textbf{B}_{2\delta} =\{(r,y'')|(r, y'') - (r_0, y_0')| \leq 2\delta\}$. Since $l \equiv 0$ in $\textbf{B}_{2 \delta}^c$, it remains to estimate $(l_1,l_2)$ in $\textbf{B}_{2 \delta}$. Without loss of generality, we may assume $y \in \Omega_1$. Then, we have $|y-x_j| \geq |y-x_1|$.

    \textbf{Estimate $H_1$.} If $N\geq 6$, $p\in(\frac{N}{N-2},\frac{N+2}{N-2})$ or $N=5$, $p\in (2,\frac{7}{3})$, we have
    \[
        (N-2-\tau)p-\frac{N}{q+1}-2-\tau>1.
    \]
    Then we can use $\frac{1}{1+\lambda|y-x_1|}\leq \frac{C}{\lambda}$ in $A$ and Lemma \ref{B2}, \ref{B3} to obtain that
    \begin{align}
        |H_1|&\leq C\chi_{A}\left( \sum\limits_{j=1}^{m}\dfrac{\lambda^{\frac{N}{p+1}} }{(1+\lambda|y-x_j|)^{N-2}}\right)^p
        \leq C\chi_{A}\dfrac{\lambda^{\frac{pN}{p+1}} }{(1+\lambda|y-x_1|)^{(N-2-\tau)p}} \label{H1-1} \\
        & \leq \dfrac{C\chi_{A}}{\lambda^{(N-2-\tau)p - \frac{N}{q+1} - 2 - \tau}} \dfrac{\lambda^{\frac{pN}{p+1}}}{(1+\lambda|y-x_1|)^{\frac{N}{q+1} + 2 + \tau}}=O\left(\frac{1}{\lambda^{1+\varepsilon}} \right) \sum\limits_{j=1}^m \dfrac{\lambda^{\frac{N}{q+1}+2}}{(1+\lambda|y-x_j|)^{\frac{N}{q+1} + 2+\tau}} .\nonumber
    \end{align}
   Therefore, for some small constant $\varepsilon>0$, we have
   \[
       ||H_1||_{**,1} =  O\left(\frac{1}{\lambda^{1+\varepsilon}} \right) .
    \]
  \textbf{Estimate $H_2$ and $K_2$}. From Lemma \ref{lemma:U}, we know for some small $\theta>0$ that
    \begin{align}
        |H_2| &\leq C\sum_{j=1}^m \frac{\lambda^{\frac{N}{q+1}} \xi(y)}{(1+\lambda|y-x_j|)^{N-2}} +C\left(\frac{m}{\lambda} \right)^{p(N-2)-2} \sum_{j=1}^m \frac{\lambda^{\frac{N}{q+1}} \xi(y)}{(1+m|y-x_j|)^{\min \{N-2,p(N-3-\theta)-2\}}} \nonumber \\
        & :=H_{21}+H_{22}. \label{H2}
    \end{align}

Since $N-1>\min\{\frac{N}{p+1},\frac{N}{q+1}\}+\tau + 2$ when $N\geq 6$, $p\in(\frac{N}{N-2},\frac{N+2}{N-2}]$ or $N=5$, $p\in (2,\frac{7}{3}) $, we have
\begin{equation}\label{H_21}
    \begin{aligned}
        H_{21}\leq C \sum\limits_{j=1}^m \dfrac{\lambda^{\frac{N}{q+1}+2} \xi(y)}{\lambda^{1+\varepsilon}(1+\lambda|y-x_j|)^{N-1-\varepsilon}} \leq \dfrac{C}{\lambda^{1+\varepsilon}} \sum\limits_{j=1}^m \dfrac{\lambda^{\frac{N}{q+1}+2}}{(1+\lambda|y-x_j|)^{\frac{N}{q+1}+\tau + 2}},
    \end{aligned}
\end{equation}
where in the first inequality, we have used
 \[
  \dfrac{1}{\lambda} \leq  \dfrac{C}{1+\lambda|y-x_1|} \;\; \text{, when} \;\;
  y \in \textbf{B}_{2\delta}.
\]
Hence, we have
\[
    ||H_{21}||_{**,1} = O\left(\frac{1}{\lambda^{1+\varepsilon}} \right) .
\]

For $H_{22}$, if $p(N-3)>N$, then $H_{22}$ can be controlled by $H_{21}$. Next, we consider the case $p(N-3)\leq N$. In this case,
\[
    H_{22}= C\left( \frac{m}{\lambda} \right)^{p(N-2)-2} \sum_{j=1}^m \frac{\lambda^{\frac{N}{q+1}} \xi(y)}{(1+m|y-x_j|)^{ p(N-3-\theta)-2}}.
\]
We estimate $H_{22}$ for $y \in S$ and $y \not\in S$. For any $y\in S$ and any $\sigma>1$, we have
$$\sum\limits_{j=2}^m \dfrac{1}{(1+m|y-x_j|)^{\sigma}}\leq \sum\limits_{j=2}^m \dfrac{C}{(m|x_1-x_j|)^{\sigma}}\leq C\leq \dfrac{C}{(1+m|y-x_1|)^{\sigma}}, \ \ \ \frac{m}{\lambda} \leq \frac{1}{1+\lambda|y-x_1|} .$$
Then, we have
\begin{equation}\label{H22S}
    \begin{aligned}
        H_{22}&\leq \left(\frac{m}{\lambda} \right)^{p(N-2)-2} \frac{\lambda^{\frac{N}{q+1}}}{(1+m|y-x_1|)^{ p(N-3-\theta)-2}}\leq C\left( \frac{m}{\lambda} \right)^{p(N-2)-2}\cdot \lambda^{\frac{N}{q+1}}\\
        &\leq \frac{C}{\lambda^2}\left(\frac{m}{\lambda} \right)^{p(N-2)-\frac{N}{q+1}-4-\tau} \left(\lambda^{\frac{N}{q+1}+2}\cdot \left(\frac{m}{\lambda} \right)^{\frac{N}{q+1}+2+\tau} \right)\\
        &\leq \dfrac{C}{\lambda^{1+\varepsilon}}\sum\limits_{j=1}^m \dfrac{\lambda^{\frac{N}{q+1}+2}}{(1+\lambda|y-x_j|)^{\frac{N}{q+1}+\tau + 2}}.
    \end{aligned}
\end{equation}
The last inequality holds since $m \approx \lambda^{\tau}$ and
\[
    1+(1-\tau)\left(p(N-2)-2-\frac{N}{q+1}-2-\tau\right)
    =1+(1-\tau)\left(\frac{pN}{q+1}-2-\tau \right)>0.
\]

For any $y\in \big(\textbf{B}_{2\delta}\ \backslash S\big)\cap \Omega_1$, we have $1 + \lambda|y-x_1| \leq C\lambda |y-x_1|$. Similarly to the computation in $H_{21}$ and using Lemma \ref{B2}, we have
we have
     \begin{align}
         H_{22}&\leq C\left( \frac{m}{\lambda} \right)^{p(N-2) -p(N-3-\theta)} \sum_{j=1}^m \frac{\lambda^{\frac{N}{q+1}} \xi(y)}{(\lambda|y-x_j|)^{ p(N-3-\theta)-2}} =C\left( \frac{m}{\lambda} \right)^{p(1+\theta)} \sum_{j=1}^m \frac{\lambda^{\frac{N}{q+1}} \xi(y)}{(\lambda|y-x_j|)^{ p(N-3-\theta)-2}} \nonumber \\
         & \leq C \left( \frac{m}{\lambda} \right)^{p(1+\theta)} \frac{\lambda^{\frac{N}{q+1}} \xi(y)}{(\lambda|y-x_1|)^{ p(N-3-\theta)-2 - \tau}} \leq C \left( \frac{m}{\lambda} \right)^{p(1+\theta)} \frac{\lambda^{\frac{N}{q+1}+2} \xi(y)}{\lambda^2(1+\lambda|y-x_1|)^{ p(N-3-\theta)-2 - \tau}} \label{H22Sc} \\
         &\leq C \dfrac{\lambda^{\frac{N}{q+1}+2} \xi(y)}{\lambda^{1+\varepsilon}(1+\lambda|y-x_1|)^{p(N-3-\theta)-1-\varepsilon+(1-\tau)p(1+\theta)-\tau}} \leq \dfrac{C}{\lambda^{1+\varepsilon}}\sum\limits_{j=1}^m \dfrac{\lambda^{\frac{N}{q+1}+2}}{(1+\lambda|y-x_j|)^{\frac{N}{q+1}+\tau + 2}},\nonumber
     \end{align}
 where we have used
 \[
    p(N-3)-1+p(1-\tau) - \tau=p(N-2-\tau)-1 - \tau>\frac{N}{q+1}+2+\tau,
\]
and
 $$\frac{1}{\lambda}\leq \frac{C}{1+\lambda|y-x_1|}, \quad\text{in } \ \ \textbf{B}_{2\delta},$$
 Combine \eqref{H22S} and \eqref{H22Sc}, it follows that
 \begin{equation}\label{H_2}
     ||H_2||_{**,1}\leq ||H_{21}||_{**,1} + ||H_{22}||_{**,1} \leq  \frac{C}{\lambda^{1+\varepsilon}}.
 \end{equation}
Similarly, we have
\begin{equation}\label{K_2}
    ||K_2||_{**,2}\leq \left| \left|  \sum_{j=1}^m \frac{\lambda^{\frac{N}{p+1}} \xi(y)}{(1+\lambda|y-x_j|)^{N-2}} \right| \right|_{**,2} \leq \frac{C}{\lambda^{1+\varepsilon}}.
\end{equation}

\textbf{Estimate $H_3$ and $K_3$}.
In $A$, it holds that $\nabla Y^*_{\bar{r}, \bar{y}'', \lambda}(y)=O\big(Y^*_{\bar{r}, \bar{y}'', \lambda}(y)\big)$, $\nabla Z^*_{\bar{r}, \bar{y}'', \lambda}(y)=O\big(Z^*_{\bar{r}, \bar{y}'', \lambda}(y)\big)$ and
$$\frac{C_1}{\lambda}\leq \frac{1}{1+\lambda|y-x_j|}\leq \frac{C_2}{\lambda}\quad j=1,\cdots,N$$
for some $C_1,\,C_2>0$. Therefore, we have
$$\sum\limits_{j=1}^m \dfrac{\lambda^{\frac{N}{q+1}+2}}{(1+\lambda|y-x_j|)^{\frac{N}{q+1}+\tau + 2}}\geq C.$$
 When $N\geq 6$, $p\in(\frac{N}{N-2},\frac{N+2}{N-2}]$ and $N=5$, $p\in (2,\frac{7}{3}) $, it holds
 $$\min\left\{\frac{N}{p+1}-\tau,\frac{N}{q+1}-\tau\right\}>1,$$
 Therefore, from Lemma \ref{B2} and \ref{B3}, we have
\begin{equation}\label{H3}
     \begin{split}
         |H_3| & \leq C \chi_{A}\sum\limits_{j=1}^m \dfrac{\lambda^{\frac{N}{q+1}}}{(1+\lambda|y-x_j|)^{N-2}} \leq \frac{C}{\lambda^{\frac{N}{p+1}-\tau}}
         \leq \dfrac{C}{\lambda^{1+\varepsilon}} \sum\limits_{j=1}^m \dfrac{\lambda^{\frac{N}{q+1}+2}}{(1+\lambda|y-x_j|)^{\frac{N}{q+1}+\tau + 2}},
     \end{split}
 \end{equation}
which implies
\begin{equation}\label{H_32}
    ||H_3||_{**,1}\leq \frac{C}{\lambda^{1+\varepsilon}}.
\end{equation}
Similarly, we have
\begin{equation}\label{K_3}
    ||K_3||_{**,2}\leq \frac{C}{\lambda^{1+\varepsilon}}.
\end{equation}

\textbf{Estimate $K_1$.} We first estimate $K_1$ in $S$. For any $y\in S$, we have $\sum\limits_{j=2}^m U_{x_j,\lambda} + \varphi \le C \le C U_{x_1,\lambda}$ and $\xi(y) =1$, thus we have
\begin{align*}
        Y_{\bar{r}, \bar{y}'', \lambda}^q - \xi U_{x_1,\lambda}^q & =(Y_{\bar{r}, \bar{y}'', \lambda}^*)^q -  U_{x_1,\lambda}^q = (U_{x_1,\lambda} + \sum\limits_{j=2}^m U_{x_j,\lambda} + \varphi)^q - U_{x_1,\lambda}^q \\
        &\leq C U_{x_1,\lambda}^{q-1}(\sum\limits_{j=2}^m U_{x_j,\lambda} + \varphi)\leq CU_{x_1,\lambda}^{q-1},
\end{align*}
and
\[
    \xi \sum\limits_{j=2}^mU_{x_j,\lambda}^q\leq U_{x_1,\lambda}^{q-1}\sum\limits_{j=2}^m U_{x_j,\lambda}\leq CU_{x_1,\lambda}^{q-1}.
\]
Therefore, we can compute that
\begin{equation}\label{K1_1}
    \begin{aligned}
        |K_1| & \leq C U_{x_1,\lambda}^{q-1} \leq C \frac{\lambda^{\frac{(q-1)N}{q+1}}}{(1+\lambda|y-x_1|)^{(q-1)(N-2)}} = C \frac{\lambda^{\frac{N}{p+1}+2}}{\lambda^{\frac{N}{q+1}}(1+\lambda|y-x_1|)^{(q-1)(N-2)}} \\
        &\leq  \dfrac{C}{\lambda^{1+\varepsilon}}\sum\limits_{j=1}^m\dfrac{\lambda^{\frac{N}{p+1} + 2}}{(1+\lambda|y-x_j|)^{(q-1)(N-2)+\frac{N}{q+1}- 1 -\varepsilon}} \leq
        \dfrac{C}{\lambda^{1+\varepsilon}}\sum\limits_{j=1}^m\dfrac{\lambda^{\frac{N}{p+1} + 2}}{(1+\lambda|y-x_j|)^{\frac{N}{p+1} + 2 + \tau}} ,
    \end{aligned}
\end{equation}
where we have used
\[
    (q-1)(N-2)+\frac{N}{q+1} - 1 > \frac{N}{p+1} + 2 + \tau.
\]

For any $y\in (\textbf{B}_{2 \delta}\backslash S) \cap \Omega_1$, from Lemma \ref{lemma:U}, we compute
    \begin{equation}\label{K1_2}
        \begin{aligned}
            &|K_1|\leq  C\left(\sum_{j=1}^m \frac{\lambda^{\frac{N}{q+1}}}{(1+\lambda|y-x_j|)^{N-2}}\right)^{q}+C\left( \left(\frac{m}{\lambda} \right)^{p(1+\theta)} \sum_{j=1}^m \frac{\lambda^{\frac{N}{q+1}}\chi_{\{p(N-3)\leq N\}}}{(1+\lambda|y-x_j|)^{p(N-3-\theta)-2}}\right)^{q} \\
            & : = K_{11} + K_{12}.
        \end{aligned}
    \end{equation}
In $\textbf{B}_{2 \delta}\backslash S$, we have $(1+\lambda|y-x_1|) \geq \lambda/(m r_0) \geq c \lambda^{1-\tau}$. Hence, the first term of the right hand side of \eqref{K1_2} can be bounded by
\begin{equation*}
    \begin{aligned}
        &K_{11} \leq \left(\sum_{j=1}^m \frac{1}{(\lambda|x_1-x_j|)^{\tau}} \cdot\frac{\lambda^{\frac{N}{q+1}}}{(1+\lambda|y-x_1|)^{N-2-\tau}}\right)^{q}\leq C\frac{\lambda^{\frac{qN}{q+1}}}{(1+\lambda|y-x_1|)^{q(N-2-\tau)}}\\
        &\leq C\left(\frac{m}{\lambda}\right)^{q(N-2-\tau)-\frac{N}{p+1}-2-\tau}\frac{\lambda^{\frac{qN}{q+1}}}{(1+\lambda|y-x_1|)^{\frac{N}{p+1}+2+\tau}}= O\left(\dfrac{1}{\lambda^{1+\varepsilon}}\right) \sum\limits_{j=1}^m\dfrac{\lambda^{\frac{N}{p+1} + 2}}{(1+\lambda|y-x_j|)^{\frac{N}{p+1} + 2 + \tau}} .
    \end{aligned}
\end{equation*}
where in the last inequality, we have used
\[
    q(N-2-\tau)-\frac{N}{p+1}-2-\tau=\frac{qN}{(p+1)}-(q+1)\tau>\frac{N-2}{2} = \frac{1}{1-\tau}.
\]
Regarding $K_{12}$, we can use Lemma \ref{B2} to get
\begin{align*}
        K_{12} &\leq \left( \left(\frac{m}{\lambda}\right)^{p(1+\theta)}\cdot\frac{\lambda^{\frac{N}{q+1}}}{(1+\lambda|y-x_1|)^{p(N-3-\theta)-2-\tau}}\right)^{q} \\
        &\leq C\left(\frac{m}{\lambda}\right)^{q(p(N-2)-2-\tau)-\frac{N}{p+1}-2-\tau}\frac{\lambda^{\frac{qN}{q+1}}}{(1+\lambda|y-x_1|)^{\frac{N}{p+1}+2+\tau}} \\
        & =O\left(\dfrac{1}{\lambda^{1+\varepsilon}} \right) \sum\limits_{j=1}^m\dfrac{\lambda^{\frac{N}{p+1} + 2}}{(1+\lambda|y-x_j|)^{\frac{N}{p+1} + 2 + \tau}},
\end{align*}
where we have used
\[
    q[p(N-3) - 2 -\tau] > \frac{N}{p+1} + 2 +\tau,
\]
and
$$q(p(N-2)-2-\tau)-\frac{N}{p+1}-2-\tau=\frac{pqN}{q+1}-(q+1)\tau>\frac{N-2}{2} = \frac{1}{1-\tau}.$$
From \eqref{K1_1} and \eqref{K1_2}, we have
\begin{equation}\label{K_1}
    |K_1|\leq \dfrac{C}{\lambda^{1+\varepsilon}}\sum\limits_{j=1}^m\dfrac{\lambda^{\frac{N}{p+1} + 2}}{(1+\lambda|y-x_j|)^{\frac{N}{p+1} + 2 + \tau}} .
\end{equation}
Combine \eqref{H1-1}, \eqref{H2}, \eqref{K_2}, \eqref{H3}, \eqref{K_3} and \eqref{K_1}, we obtain
\[
    ||(l_1, l_2)||_{**} \leq \dfrac{C}{\lambda^{1+\varepsilon}}.
\]

\end{proof}

\begin{proof}[Proof of Proposition \ref{existence2}]

   Define
  $$ \begin{array}{ll}
      \bar{E}=\displaystyle\left\{ (u,v) \in E \cap X\ | \ \ ||(u,v)||_{*} \leq \dfrac{1}{\lambda^{1+\varepsilon_0}}\right\},
   \end{array}$$
   where $\varepsilon_0 > 0$ is slightly less than $\varepsilon$, which appears in Lemma \ref{l}.

   We will find a solution of (\ref{linear-equation2}) in $E$ which is equivalent to
   \[
      (\phi, \psi) = A(\phi,\psi) : = L_m(N_1(\psi),N_2(\phi)) + L_m(l_1,l_2), \;\; \hbox{for} \;\;  (\phi,\psi) \in E.
   \]

  First, we prove that $A$ maps $ \bar{E}$ into $ \bar{E}$. For any $(u,v) \in  \bar{E}$, by Lemma \ref{existence1}, Lemma \ref{N} and Lemma \ref{l}, we have
   \begin{align}
      ||A(u,v)||_{*} & \leq ||L_m(N_1(v),N_2(u))||_* + ||L_m(l_1,l_2)||_*
       \leq C ||(N_1(v),N_2(u))||_{**} + C ||(l_1,l_2)||_{**} \nonumber \\
      & \leq C ||(u , v)||_{*}^{min(p,2)} + \dfrac{C}{\lambda^{1+\varepsilon}}
       \leq \bigg( \dfrac{C}{\lambda^{1+\varepsilon_0}} \bigg)^{\min\{p,2\}}  + \dfrac{C}{\lambda^{1+\varepsilon} } \leq  \dfrac{1}{\lambda^{1+\varepsilon_0}}, \label{A}
     \end{align}
  Here we choose $m$ large enough such that $A(u,v) \in \bar{E}$.

   Secondly, we prove $A$ is a contraction map. For any $(\omega_1, \omega_2)$ and $(\phi_1,\phi_2) \in \bar{E}$, we have
   \[
      \begin{split}
         ||A(\omega_1,\omega_2) - A(\phi_1,\phi_2)||_* & = ||L_m(N_1(\omega_2),N_2(\omega_1)) - L_m(N_1(\phi_2),N_2(\phi_1))||_*\\
         & \leq C||(N_1(\omega_2) - N_1(\phi_2) ,  N_2(\omega_1) - N_2(\phi_1))||_{**} .
      \end{split}
   \]

It holds that
   \[
      |N_2^{\prime}(t)| \leq \begin{cases}
          C |t|^{q-1}, \;\;\; \hbox{if}\;\; q \leq 2 ;\\
   C(Y_{\bar{r},\bar{y}'',\lambda})^{q-2}t + C|t|^{q-1} \;\; \hbox{if}\;\; q > 2.
      \end{cases}
   \]
   We only present the argument for the case $q>2$, since the other case is similar. Similar to the proof of Lemma \ref{N}, we can derive from Lemma \ref{l}, Lemma \ref{B1_1} and Lemma \ref{l1-23-4} that
   \begin{align*}
         &\quad  |N_2(\omega_1) - N_2(\phi_1)|   \leq C \left( (Y_{\bar{r},\bar{y}'',\lambda})^{q-2}(| \omega_1 | + | \phi_1 | )  + |\omega_1|^{q-1} + |\phi_1|^{q-1}\right)|\omega_1 - \phi_1| \\
         & \leq C (||\omega_1||_{*,1}^{q-1}+||\phi_1||_{*,1}^{q-1}) ||\omega_1 - \phi_1||_{*,1} \left( \sum\limits_{j=1}^{m} \dfrac{\lambda^{\frac{N}{q+1}}}{(1+\lambda|y-x_j|)^{\frac{N}{q+1}+\tau}}  \right)^{q} \\
         & \quad+ C (||\omega_1||_{*,1}+||\phi_1||_{*,1}) ||\omega_1 - \phi_1||_{*,1}\left(  \sum\limits_{j=1}^{m} \dfrac{\lambda^{\frac{N}{q+1}}}{(1+\lambda|y-x_j|)^{\frac{N}{q+1}+\tau+\theta}} \right)^{q-2} \left( \sum\limits_{j=1}^{m} \dfrac{\lambda^{\frac{N}{q+1}}}{(1+\lambda|y-x_j|)^{\frac{N}{q+1}+\tau}}  \right)^{2} \\
         & \leq C \lambda^{-\varepsilon} ||\omega_1 - \phi_1||_{*,1} \sum\limits_{j=1}^m \dfrac{\lambda^{\frac{N}{p+1}+2}}{(1+\lambda|y-x_j|)^{\frac{N}{p+1}+2+\tau}}.
     \end{align*}
   where $\varepsilon>0$ is a small constant.

  Hence, we have
   \[
      ||N_2(\omega_1) - N_2(\phi_1)||_{**,2} \leq C \lambda^{-\varepsilon}||\omega_1-\phi_1||_{*,1}.
   \]
   Similarly, we also have
   \[
      ||N_1(\omega_2) - N_1(\phi_2)||_{**,1} \leq \lambda^{-\varepsilon}||\omega_2-\phi_2||_{*,2}.
   \]
   Thus, we obtain
   \[
   \begin{split}
      ||A(\omega_1,\omega_2) - A(\phi_1,\phi_2)||_* & \leq C||(N_1(\omega_2) - N_1(\phi_2) ,  N_2(\omega_1) - N_2(\phi_1))||_{**} \\
      & \leq C\lambda^{-\varepsilon} ||(\omega_1 - \phi_1, \omega_2-\phi_2)||_{*} \leq \dfrac{1}{2} ||(\omega_1, \omega_2) - (\phi_1, \phi_2)||_{*}.
   \end{split}
   \]
   The last inequality holds if we choose $m$ large enough.

   As a result, there is a $m_0>0$ such that for any $m \geq m_0$, $A$ is a contraction map from $\bar{E}$ to $\bar{E}$ and it follows that there is a unique solution $(\phi.\psi) \in \bar{E}$ of equation (\ref{linear-equation2}). Moreover, by (\ref{A}), we have
   \[
   ||(\phi,\psi)||_{*} = ||A(\phi,\psi)||_{*} \leq \dfrac{C}{\lambda^{1+\varepsilon}}.
   \]
   Finally, estimate of $c_l$ follows from Lemma \ref{existence1}.
\end{proof}

\section{Proof of Main Result}

Recall that the energy functional of \eqref{Main} is defined as
\[
    I(u,v) = \int_{\R^N}(\nabla u \nabla v + V(|y'|,y'')uv ) \ \dy - \dfrac{1}{p+1} \int_{\R^N} |v|^{p+1} \ \dy - \dfrac{1}{q+1} \int_{\R^N} |u|^{q+1} \ \dy.
\]
In this section, we will choose $\bar{r}, \bar{y}''$ and $\lambda$ such that $c_l = 0, l =1,2,\ldots,N$ in \eqref{linear-equation}.
\begin{prop}\label{prop31}
    Suppose that $(\bar{r}, \bar{y}'',\lambda)$ satisfies
   \begin{align}
       \int_{\textbf{B}_\rho}\left( -\Delta u_m+V(r,y'') u_m-|v_m|^{p-1}v_m\right)\langle y, \nabla v_m\rangle+
       \left(-\Delta v_m+V(r,y'') v_m-|u_m|^{q-1}u_m \right)\langle y,\nabla u_m\rangle=0,\label{31}\\
       \int_{\textbf{B}_\rho}\left( -\Delta u_m+V(r,y'') u_m-|v_m|^{p-1}v_m\right)\frac{\partial v_m}{\partial y_i}+
       \left(-\Delta v_m+V(r,y'') v_m-|u_m|^{q-1}u_m \right)\frac{\partial u_m}{\partial y_i}=0,\,\,\, i=3,\cdots,N, \label{32}\\
       \int_{\mathbb{R}^N}\left( -\Delta u_m+V(r,y'') u_m-|v_m|^{p-1}v_m\right)\frac{\partial Z_{\bar{r}, \bar{y}'', \lambda}}{\partial \lambda}+\left(-\Delta v_m+V(r,y'') v_m-|u_m|^{q-1}u_m \right)\frac{\partial Y_{\bar{r}, \bar{y}'', \lambda}}{\partial \lambda}=0,\label{33}
   \end{align}
   where $(u_m,v_m)=(Y_{\bar{r}, \bar{y}'', \lambda}+\phi_{\bar{r}, \bar{y}'', \lambda},Z_{\bar{r}, \bar{y}'', \lambda}+\psi_{\bar{r}, \bar{y}'', \lambda})$ and $\textbf{B}_\rho=\{(r,y''):\,\,|(r,y'')-(r_0,y_0'')|\leq \rho\}$ with $\rho\in (2\delta,5\delta)$, then $c_l=0,\,\,l=1,\cdots N$.
\end{prop}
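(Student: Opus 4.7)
The plan is to substitute the equations satisfied by $(u_m,v_m)$ into the three Pohozaev-type identities \eqref{31}, \eqref{32}, \eqref{33} to obtain a linear system in the unknowns $c_1,\ldots,c_N$, and then show this system has only the trivial solution. Since $(\phi,\psi)$ solves \eqref{linear-equation2}, the pair $(u_m,v_m)=(Y_{\bar r,\bar y'',\lambda}+\phi,Z_{\bar r,\bar y'',\lambda}+\psi)$ satisfies
$$-\Delta u_m+Vu_m-v_m^p=\sum_{l=1}^N c_l\sum_{j=1}^m pZ_{x_j,\lambda}^{p-1}Z_{j,l},\quad -\Delta v_m+Vv_m-u_m^q=\sum_{l=1}^N c_l\sum_{j=1}^m qY_{x_j,\lambda}^{q-1}Y_{j,l}.$$
Plugging these into \eqref{31}--\eqref{33} gives an $N\times N$ linear system $\sum_l M_{hl}c_l=0$ for $h=1,\ldots,N$, where the row $h=1$ comes from \eqref{33}, $h=2$ from \eqref{31}, and $h=3,\ldots,N$ from \eqref{32}.

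The strategy is to show that $M$, after diagonal rescaling, is diagonally dominant. The Pohozaev identities are designed to be dual to the parameters: \eqref{33} with $\partial_\lambda$ is paired with the scaling direction $l=1$; \eqref{31} with the radial generator $\langle y,\nabla\cdot\rangle$ is paired with the $\bar r$-direction $l=2$; \eqref{32} with $\partial_{y_i}$ is paired with the $\bar y''_i$-direction $l=i$. For the diagonal entries $M_{hh}$, I would replace $(u_m,v_m)$ by the dominant profiles $(Y_{\bar r,\bar y'',\lambda},Z_{\bar r,\bar y'',\lambda})$, with the error controlled by $\|(\phi,\psi)\|_*=O(\lambda^{-1-\varepsilon})$ from Proposition \ref{existence2}. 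Using $-\Delta Y_{j,l}=\partial_{\Box_l}(V_{x_j,\lambda}^p)+(\text{cutoff terms})$ and the analog identity for $Z_{j,l}$, together with the scaling identities for $(U_{0,1},V_{0,1})$ (for \eqref{31}, \eqref{33}) and translation invariance (for \eqref{32}), the leading-order computation yields
$$M_{hh}=m\lambda^{2n_h}b_h+o(m\lambda^{2n_h})$$
for some $b_h\neq 0$, completely analogous to \eqref{LHS^1}.

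For the off-diagonal entries $M_{hl}$ with $h\neq l$, one exploits the symmetries (cylindrical and reflection symmetries enforced by $H_s$) and the $L^2$-orthogonality of the derivatives $\partial_{\Box_l}U_{x_j,\lambda}$, $\partial_{\Box_l}V_{x_j,\lambda}$ at leading order, to obtain
$$M_{hl}=o(m\lambda^{n_h+n_l}).$$
The interaction between different bubbles $j\neq k$ and the contribution from the potential $V$ are also shown to be lower order (as in \eqref{cV1}, using the smallness $1/\lambda\lesssim 1/(1+\lambda|y-x_j|)$ and $|\lambda(x_1-x_j)|^{-\tau}$ decay). Combining the two estimates, the rescaled matrix $\mathrm{diag}(\lambda^{-n_h})M\,\mathrm{diag}(\lambda^{-n_l})$ is diagonally dominant with nonzero diagonal for $m$ large, so $M$ is invertible and hence $c_l=0$ for all $l$.

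The main obstacle is Step 2: in \eqref{31}--\eqref{33} the left-hand side is an integral of the residual $-\Delta u_m+Vu_m-v_m^p$ (which is precisely $\sum_l c_l\sum_j pZ_{x_j,\lambda}^{p-1}Z_{j,l}$) against a test vector field or the $\partial_\lambda$-derivative; one must verify that in integrating by parts to obtain the standard Pohozaev balance, the boundary contributions on $\partial D_\rho$ are negligible because $(Y,Z)$ and their derivatives decay like $O(\lambda^{-(N-2)})$ at fixed distance from $(r_0,y_0'')$ since $\rho\in(2\delta,5\delta)$ is outside the support of the cutoff $\xi$, and that the error from $(\phi,\psi)$ is bounded by $\|(\phi,\psi)\|_*\cdot\|\text{test}\|\leq C\lambda^{-1-\varepsilon}\cdot O(\lambda^{n_h})$, which is strictly below the diagonal scale. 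Once these error controls are in place, the linear algebra in Step 5 closes the argument.
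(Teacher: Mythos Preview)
Your overall strategy---substitute the residual equations into \eqref{31}--\eqref{33} to obtain a linear system $\sum_l M_{hl}c_l=0$, compute the diagonal entries as $m\lambda^{2n_h}(a_h+o(1))$ and the off-diagonal ones as $o(m\lambda^{n_h+n_l})$, and conclude by diagonal dominance after rescaling---is exactly what the paper does. Two points deserve correction, however.

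First, your ``main obstacle'' is not one. There is no integration by parts of Pohozaev type in this proposition, and no boundary contribution on $\partial D_\rho$ arises: since $\rho>2\delta$ and $\xi\equiv 0$ outside the $2\delta$-neighborhood of $(r_0,y_0'')$, the functions $Z_{x_j,\lambda}^{p-1}Z_{j,l}$ and $Y_{x_j,\lambda}^{q-1}Y_{j,l}$ are supported in $D_{2\delta}\subset D_\rho$. So the integrals over $D_\rho$ are equal to integrals over $\R^N$, and the substitution is purely algebraic. The Pohozaev integration by parts and the boundary estimates you describe belong to the \emph{next} step (deriving the reduced equations for $(\bar r,\bar y'')$), not to this proposition.

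Second, you gloss over the actual difficulty: the test functions in \eqref{31} and \eqref{32} are $\langle y,\nabla u_m\rangle$, $\partial_{y_i}u_m$, etc., which contain $\nabla\phi$ and $\nabla\psi$. The $\|\cdot\|_*$-norm gives no control on gradients, so you cannot simply bound ``error $\leq \|(\phi,\psi)\|_*\cdot\|\text{test}\|$''. The paper handles this by integrating by parts to move the derivative onto the compactly supported factor:
\[
\int_{\R^N}\sum_{j}pZ_{x_j,\lambda}^{p-1}Z_{j,l}\,\partial_{y_k}\psi
=-\int_{\R^N}\partial_{y_k}\Bigl(\sum_{j}pZ_{x_j,\lambda}^{p-1}Z_{j,l}\Bigr)\psi
=O\bigl(m\lambda^{1+n_l}\|\psi\|_{*,2}\bigr)=o(m\lambda^{1+n_l}),
\]
and similarly for $\langle y,\nabla\psi\rangle$. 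For the row $h=2$ one also splits $\langle y,\nabla Y\rangle=\langle y',\nabla_{y'}Y\rangle+\langle y'',\nabla_{y''}Y\rangle$ and checks that the $y''$-part is absorbed into the off-diagonal $o(m\lambda^{1+n_l})$ terms, so that only the $\langle y',\nabla_{y'}\rangle$ part contributes to the $l=2$ diagonal. With these two adjustments your argument matches the paper's.
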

\begin{proof}
    Since $(Y_{\bar{r}, \bar{y}'', \lambda},Z_{\bar{r}, \bar{y}'', \lambda})= (\xi Y_{\bar{r}, \bar{y}'', \lambda}^*, \xi Z_{\bar{r}, \bar{y}'', \lambda}^*)=(0,0)$ in $\mathbb{R}^N\backslash \textbf{B}_\rho$, we see that if (\ref{31})-(\ref{33}) hold, then
    \begin{equation}\label{P31_1}
    \sum_{l=1}^{N}c_l\sigma_{kl}=0,\,\,k=1,\cdots,N, \ \ \text{where} \ \
        \sigma_{kl}=\int_{\R^N}\left(\sum\limits_{j=1}^{m} q Y_{x_j,\lambda}^{q-1} Y_{j,l},  \sum\limits_{j=1}^{m} p Z_{x_j,\lambda}^{p-1} Z_{j,l}\right)\cdot W_k,
    \end{equation}
    for $k,l=1,\cdots,N$ with
    \[
        W_k=\begin{cases}
            \left( \frac{\partial Y_{\bar{r}, \bar{y}'', \lambda}}{\partial \lambda},\frac{\partial Z_{\bar{r}, \bar{y}'', \lambda}}{\partial \lambda}\right),\,\,\,k=1,\\
            \left(\langle y,\nabla u_m\rangle,\langle y,\nabla v_m\rangle\right),\,\,\,k=2,\\
            \left( \frac{\partial u_m}{\partial y_k},\frac{\partial v_m}{\partial y_k}\right),\,\,\,k=3,\cdots,N.
        \end{cases}
    \]
    By direct calculation, we can prove
    \begin{equation}\label{P31_2}
        \begin{aligned}
            \int_{\R^N}\left(\sum\limits_{j=1}^{m} q Y_{x_j,\lambda}^{q-1} Y_{j,1},  \sum\limits_{j=1}^{m} p Z_{x_j,\lambda}^{p-1} Z_{j,1}\right)\cdot \left(\frac{\partial Y_{\bar{r}, \bar{y}'', \lambda}}{\partial \lambda},\frac{\partial Z_{\bar{r}, \bar{y}'', \lambda}}{\partial \lambda}\right)=\frac{m}{\lambda^2}(a_1+o(1)),\\
            \int_{\R^N}\left(\sum\limits_{j=1}^{m} q Y_{x_j,\lambda}^{q-1} Y_{j,2},  \sum\limits_{j=1}^{m} p Z_{x_j,\lambda}^{p-1} Z_{j,2}\right)\cdot \left(\langle y',\nabla_{y'}Y_{\bar{r}, \bar{y}'', \lambda} \rangle,\langle y',\nabla_{y'} Z_{\bar{r}, \bar{y}'', \lambda} \rangle \right)=m\lambda^2(a_2+o(1)),\\
            \int_{\R^N}\left(\sum\limits_{j=1}^{m} q Y_{x_j,\lambda}^{q-1} Y_{j,i},  \sum\limits_{j=1}^{m} p Z_{x_j,\lambda}^{p-1} Z_{j,i}\right)\cdot \left(\frac{\partial Y_{\bar{r}, \bar{y}'', \lambda}}{\partial y_i},\frac{\partial Z_{\bar{r}, \bar{y}'', \lambda}}{\partial y_i}\right)=m\lambda^2(a_i+o(1)),\,\,\,i=3,\cdots,N,
        \end{aligned}
    \end{equation}
    and
    \begin{align}\label{P31_21}
        \int_{\R^N}\left(\sum\limits_{j=1}^{m} q Y_{x_j,\lambda}^{q-1} Y_{j,s},  \sum\limits_{j=1}^{m} p Z_{x_j,\lambda}^{p-1} Z_{j,s}\right)\cdot \left(\langle y_s,\nabla_{y_s} Y_{\bar{r}, \bar{y}'', \lambda}\rangle,\langle y_s,\nabla_{y_s} Z_{\bar{r}, \bar{y}'', \lambda}\rangle\right)=m\lambda^2(a_{2s}+o(1)),\nonumber\\
        s=3,\cdots,N,
    \end{align}
    for some constants $a_i\neq 0,\, i=1,2,\cdots,N$, $a_{2s}\neq0,\, s=3,\cdots,N$ and $o(1)\rightarrow0$, as $\lambda\rightarrow +\infty$.

     Next, we estimate the terms involving the derivative of $(\phi,\psi)$ in $W_k$, $k=2,\cdots,N$.
     \begin{equation}\label{P31_3}
         \begin{aligned}
             &\left| \int_{\R^N} \sum\limits_{j=1}^{m} p Z_{x_j,\lambda}^{p-1} Z_{j,l}\cdot \frac{\partial \psi}{\partial y_k}\right| = \left| \int_{\R^N} \frac{\partial}{\partial y_k}\big(\sum\limits_{j=1}^{m} p Z_{x_j,\lambda}^{p-1} Z_{j,l}\big)\cdot \psi \right|\\
             &\leq C \lambda^{1+n_l} ||\psi||_{*,2} \int_{\R^N} \sum\limits_{j=1}^{m} \frac{\lambda^N}{(1+\lambda|y-x_j|)^{(N-2)p}}\cdot\sum\limits_{j=1}^{m}\frac{1}{(1+\lambda|y-x_j|)^{\frac{N}{p+1}+\tau}}=o(m\lambda^{1+n_l}).
         \end{aligned}
     \end{equation}
     Similarly, we can prove
    \begin{equation}\label{P31_4}
        \begin{gathered}
            \int_{\R^N}\left(\sum\limits_{j=1}^{m} q Y_{x_j,\lambda}^{q-1} Y_{j,l},  \sum\limits_{j=1}^{m} p Z_{x_j,\lambda}^{p-1} Z_{j,l}\right)\cdot \left(\langle y,\nabla \phi_{\bar{r}, \bar{y}'', \lambda}\rangle,\langle y,\nabla \psi_{\bar{r}, \bar{y}'', \lambda}\rangle\right)=o(m\lambda^{n_l+1}), \\
            \int_{\R^N}\left(\sum\limits_{j=1}^{m} q Y_{x_j,\lambda}^{q-1} Y_{j,l},  \sum\limits_{j=1}^{m} p Z_{x_j,\lambda}^{p-1} Z_{j,l}\right)\cdot \left( \frac{\partial \phi_{\bar{r}, \bar{y}'', \lambda}}{\partial y_k},\frac{\partial \psi_{\bar{r}, \bar{y}'', \lambda}}{\partial y_k}\right)=o(m\lambda^{n_l+1}), \ \ k=3,\cdots,N.
            \end{gathered}
    \end{equation}
    Next, we estimate $\sigma_{kl}$. For $k=2$, combine \eqref{P31_2}, \eqref{P31_21}, \eqref{P31_4} and
    \[
        \left(\langle y,\nabla Y_{\bar{r}, \bar{y}'', \lambda}\rangle,\langle y,\nabla Z_{\bar{r}, \bar{y}'', \lambda}\rangle\right)=\left(\langle y',\nabla_{y'} Y_{\bar{r}, \bar{y}'', \lambda}\rangle,\langle y',\nabla_{y'} Z_{\bar{r}, \bar{y}'', \lambda}\rangle\right)+\left(\langle y'',\nabla_{y''} Y_{\bar{r}, \bar{y}'', \lambda}\rangle,\langle y'',\nabla_{y''} Z_{\bar{r}, \bar{y}'', \lambda}\rangle\right),
    \]
    we have
    \begin{equation}\label{P31_c1}
        \begin{aligned}
            \sigma_{2l}&
            =\int_{\R^N}\big(\sum\limits_{j=1}^{m} q Y_{x_j,\lambda}^{q-1} Y_{j,l},  \sum\limits_{j=1}^{m} p Z_{x_j,\lambda}^{p-1} Z_{j,l}\big)\cdot
            \left(\langle y',\nabla_{y'}Y_{\bar{r}, \bar{y}'', \lambda} \rangle,\langle y',\nabla_{y'} Z_{\bar{r}, \bar{y}'', \lambda} \rangle\right)\\
            &\hspace{1em}+\sum\limits_{s=3}^{N}\int_{\R^N}\big(\sum\limits_{j=1}^{m} q Y_{x_j,\lambda}^{q-1} Y_{j,l},  \sum\limits_{j=1}^{m} p Z_{x_j,\lambda}^{p-1} Z_{j,l}\big)\cdot
            \left(\langle y_s,\nabla_{y_s}Y_{\bar{r}, \bar{y}'', \lambda} \rangle,\langle y_s,\nabla_{y_s} Z_{\bar{r}, \bar{y}'', \lambda} \rangle\right)+o(m\lambda^{1+n_l})\\
            &=\delta_{2l} m\lambda^2(a_2+o(1))+\sum_{s=3}^{N}\delta_{sl}m\lambda^2(a_{2s}+o(1))+o(m\lambda^{1+n_l}).
        \end{aligned}
    \end{equation}
    For $k\neq 2$, combine (\ref{P31_2}) and (\ref{P31_4}), we have
    \begin{equation}\label{P31_c2}
        \begin{aligned}
            \sigma_{kl}=\delta_{kl}m\lambda^{n_k+n_l}(a_k+o(1))+o(m\lambda^{n_k+n_l}).
        \end{aligned}
    \end{equation}
    Based on (\ref{P31_c1}) and (\ref{P31_c2}), the algebra equations (\ref{P31_1}) governing {$c_l$} can be transformed into
    \begin{align}\label{P31_e}
       \left( \Lambda^T P \Lambda \right)\textbf{c}=\textbf{0},
    \end{align}
    where $\textbf{c}=(c_1,\cdots,c_N)^T$, $\Lambda=\text{diag}(\frac{1}{\lambda},\lambda,\cdots,\lambda)\in \mathbf{M}_{N}(\mathbb{R})$, $\textbf{0}$ is the zero vector in $\R^N$ and
    \[
        P =
        \begin{pmatrix}
            a_{1}+o(1) & o(1) & o(1) & \cdots & o(1)\\
            o(1)      & a_{2}+o(1) & a_{23} & \cdots & a_{2n}\\
            o(1)     & o(1)      & a_{3}+o(1) & \cdots & o(1)\\
            \vdots & \vdots & \vdots & \ddots & \vdots\\
            o(1)      & o(1)      & o(1)      & \cdots & a_{n}+o(1)
        \end{pmatrix}.
    \]
    Since $P$ is invertible when $\lambda$ is sufficiently large, we can deduce from (\ref{P31_e}) that
   $$\textbf{c}=\textbf{0}.$$
   This completes the proof.
\end{proof}

Next, we need to compute the left hand side of \eqref{31}-\eqref{33}.

\begin{lemma}
     We have
     \begin{equation}\label{partial derivative_3}
         \begin{aligned}
               &\int_{\mathbb{R}^N}\left( -\Delta u_m+V(r,y'') u_m-|v_m|^{p-1}v_m\right)\frac{\partial Z_{\bar{r}, \bar{y}'', \lambda}}{\partial \lambda}+\left(-\Delta v_m+V(r,y'') v_m-|u_m|^{q-1}u_m \right)\frac{\partial Y_{\bar{r}, \bar{y}'', \lambda}}{\partial \lambda}\\
               &=m\left(-\frac{B_1}{\lambda^3}V(\bar{r},\bar{y}'')+\sum\limits^{m}_{j=2}\frac{B_2(N-2)}{\lambda^{N-1}|x_1-x_j|^{N-2}}+O\left(\frac{1}{\lambda^{3+\varepsilon}}\right)\right),\\
               &=m\left(-\frac{B_1}{\lambda^3}V(\bar{r},\bar{y}'')+\frac{B_3m^{N-2}}{\lambda^{N-1}}+O\left(\frac{1}{\lambda^{3+\varepsilon}}\right)\right).
         \end{aligned}
     \end{equation}
     for some positive constants $B_i,\,i=1,2,3$.
\end{lemma}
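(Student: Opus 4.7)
My plan is to substitute $u_m = Y_{\bar r, \bar y'', \lambda} + \phi$ and $v_m = Z_{\bar r, \bar y'', \lambda} + \psi$ (writing $Y := Y_{\bar r, \bar y'', \lambda}$ and $Z := Z_{\bar r, \bar y'', \lambda}$), expand $(Z+\psi)^p = Z^p + pZ^{p-1}\psi + N_1(\psi)$ and $(Y+\phi)^q = Y^q + qY^{q-1}\phi + N_2(\phi)$, and split the left-hand side as $I_{\mathrm{main}} + R$, where
\[
I_{\mathrm{main}} = \int\!\bigl(-\Delta Y + VY - Z^p\bigr)\frac{\partial Z}{\partial\lambda} + \int\!\bigl(-\Delta Z + VZ - Y^q\bigr)\frac{\partial Y}{\partial\lambda},
\]
and $R$ collects the pieces linear in $(\phi,\psi)$ together with $-\int N_1(\psi)\partial_\lambda Z - \int N_2(\phi)\partial_\lambda Y$. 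Using $Y = \xi Y^*$, $Z = \xi Z^*$, the Lane--Emden identities $-\Delta U_{x_j,\lambda} = V_{x_j,\lambda}^p$, $-\Delta V_{x_j,\lambda} = U_{x_j,\lambda}^q$, and the identity $Y\partial_\lambda Z + Z\partial_\lambda Y = \partial_\lambda(YZ)$ where $\xi = 1$, I would further decompose $I_{\mathrm{main}} = P + \mathcal I + (\mathrm{cutoff})$, with the potential piece $P = \int V\,\partial_\lambda(YZ)$, the interaction piece $\mathcal I = \int(\xi\sum V_{x_j,\lambda}^p - Z^p)\partial_\lambda Z + \int(\xi\sum U_{x_j,\lambda}^q - Y^q)\partial_\lambda Y$, and cutoff contributions supported in $\{\delta \leq |(r,y'') - (r_0, y_0'')|\leq 2\delta\}$ that are exponentially small at the scale $1/\lambda$ and hence absorbed in the $O(m/\lambda^{3+\varepsilon})$ error for $N \geq 5$.

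For $P$, the scaling $U_{x_j,\lambda}(y) = \lambda^{N/(q+1)}U_{0,1}(\lambda(y-x_j))$ (and the analogue for $V_{x_j,\lambda}$) together with continuity of $V$ and the symmetry $V(x_j) = V(\bar r, \bar y'')$ for all $j$ gives $\int V\,YZ = m V(\bar r, \bar y'')\lambda^{-2}\int U_{0,1}V_{0,1} + o(m\lambda^{-2})$ from the diagonal $i=j$ terms; off-diagonal pairs contribute $O(1/(\lambda^{N-2}|x_i-x_j|^{N-2}))$ and are absorbed into $\mathcal I$. Applying $\partial_\lambda$ produces the term $-mB_1 V(\bar r, \bar y'')/\lambda^3$ with $B_1 = 2\int U_{0,1}V_{0,1}$. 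For $\mathcal I$, Taylor expansion in the region $\xi = 1$ gives $Z^p - \sum V_{x_j,\lambda}^p \approx p\sum_i V_{x_i,\lambda}^{p-1}\sum_{j\neq i}V_{x_j,\lambda}$ (higher cross-terms go into the error). Using the Newton-type asymptotic $V_{x_j,\lambda}(x_i) = \gamma_N\lambda^{N/(p+1)-(N-2)}/|x_i-x_j|^{N-2}(1 + o(1))$ (the factor $(N-2)$ in the final formula traces back to the Green's-function normalization $1/((N-2)\omega_{N-1}|y|^{N-2})$ for $-\Delta$) and $\int V_{x_i,\lambda}^{p-1}\partial_\lambda V_{x_i,\lambda} = \tfrac{1}{p}\partial_\lambda\int V_{x_i,\lambda}^p$, each $(x_1, x_j)$ pair contributes $B_2(N-2)/(\lambda^{N-1}|x_1-x_j|^{N-2})$; symmetry over $i$ and the parallel computation in the $Y$-sector (folded into the same constant $B_2$) yield the stated interaction sum.

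For the remainder $R$, I would integrate by parts to move $-\Delta$ onto $\partial_\lambda Y, \partial_\lambda Z$, arriving at $R = \langle(\phi,\psi), (g_1,g_2)\rangle - \int N_1(\psi)\partial_\lambda Z - \int N_2(\phi)\partial_\lambda Y$, where $(g_1,g_2)$ differs from $(\sum q Y_{x_j,\lambda}^{q-1}Y_{j,1}, \sum p Z_{x_j,\lambda}^{p-1}Z_{j,1})$ only by a tail of order $m/\lambda^{1+\varepsilon}$ in $\|\cdot\|_{**}$. The orthogonality $(\phi,\psi)\in E$ at $l=1$ annihilates the principal resonance, and combined with $\|(\phi,\psi)\|_* \leq C/\lambda^{1+\varepsilon}$ from Proposition \ref{existence2} this gives $|\langle(\phi,\psi),(g_1,g_2)\rangle|\leq Cm/\lambda^{3+\varepsilon}$; the $N_1, N_2$ contributions are bounded using Lemma \ref{N} by $C/\lambda^{(1+\varepsilon)\min(p,2)} = o(1/\lambda^3)$. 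For the second equality in the lemma, the geometric fact $|x_1 - x_j| = 2\bar r\sin((j-1)\pi/m)$ combined with the Riemann-type estimate $\sum_{j=2}^{m}\sin^{-(N-2)}((j-1)\pi/m) = C\,m^{N-2}(1+o(1))$ (the series near $j=1$ dominates and produces the $m^{N-2}$ scaling, convergent overall for $N\geq 5$) yields $\sum_{j=2}^m B_2(N-2)/(\lambda^{N-1}|x_1-x_j|^{N-2}) = B_3 m^{N-2}/\lambda^{N-1}(1+o(1))$.

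\textbf{Main obstacle.} The delicate point is the refined estimate $R = O(m/\lambda^{3+\varepsilon})$: a direct bound $|R| \leq m\|(\phi,\psi)\|_*\|(\partial_\lambda Y,\partial_\lambda Z)\|_{**}$ loses a factor and produces only $O(m/\lambda^{2+\varepsilon})$, which would corrupt the leading $-mB_1 V/\lambda^3$ contribution. The missing factor $1/\lambda^{1+\varepsilon}$ must be extracted by fully exploiting the orthogonality of $(\phi,\psi)$ to the $\lambda$-direction of the approximate kernel --- precisely the $l=1$ constraint built into the definition of $E$ --- so that only a genuinely off-resonance remainder survives. This is, in essence, why the finite-dimensional reduction was set up with the weighted norms and orthogonality conditions of Section 2.
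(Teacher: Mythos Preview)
Your decomposition into $I_{\mathrm{main}} + R$ and the computation of $I_{\mathrm{main}}$ (the potential piece, the interaction piece, and the passage to the second equality via $|x_1-x_j| = 2\bar r\sin((j-1)\pi/m)$) are essentially the content of Lemma~\ref{expansion_lambda} and are fine. The problem is the mechanism you propose for bounding the linear-in-$(\phi,\psi)$ part of $R$.

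You assert that after integration by parts the pair $(g_1,g_2)$ differs from the orthogonality directions $(\sum_j qY_{x_j,\lambda}^{q-1}Y_{j,1},\ \sum_j pZ_{x_j,\lambda}^{p-1}Z_{j,1})$ only by a tail of order $m/\lambda^{1+\varepsilon}$ in $\|\cdot\|_{**}$, so that the $l=1$ constraint in $E$ kills the principal part. This is not correct. In the region $\xi=1$ one has
\[
g_1 \;=\; -\Delta\partial_\lambda Z + V\partial_\lambda Z - qY^{q-1}\partial_\lambda Y
\;=\; \sum_j qU_{x_j,\lambda}^{q-1}\partial_\lambda U_{x_j,\lambda}
\;-\; q\Bigl(\sum_j U_{x_j,\lambda}\Bigr)^{q-1}\sum_k\partial_\lambda U_{x_k,\lambda}
\;+\; V\partial_\lambda Z,
\]
and subtracting $\sum_j qY_{x_j,\lambda}^{q-1}Y_{j,1}$ simply removes the first sum, leaving the large term $-qY^{q-1}\partial_\lambda Y$ intact. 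Paired with $\phi$ this term is of order $\lambda^{-1}\!\int Y^{q}|\phi| = O(m\lambda^{-1}\|\phi\|_{*,1}) = O(m/\lambda^{2+\varepsilon})$, exactly the lost factor you flagged as the main obstacle. The orthogonality condition in $E$ is of no help here: it controls $\sum_j qY_{x_j,\lambda}^{q-1}Y_{j,1}$ paired with $\phi$ (together with the $\psi$-counterpart), not $qY^{q-1}\partial_\lambda Y$.

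What the paper actually does---and what resolves the obstacle without any appeal to orthogonality---is observe that $(g_1,g_2)$ is \emph{intrinsically} small. Since $-\Delta(\partial_\lambda V_{x_j,\lambda}) = qU_{x_j,\lambda}^{q-1}\partial_\lambda U_{x_j,\lambda}$ exactly, the two large pieces of $g_1$ cancel to leading order and what remains is a pure cutoff/interaction residual:
\[
-\Delta\partial_\lambda Z_{x_1,\lambda} - qY^{q-1}\partial_\lambda Y_{x_1,\lambda}
\;=\; q\bigl(U_{x_1,\lambda}^{q-1} - Y^{q-1}\bigr)\partial_\lambda U_{x_1,\lambda} + (\text{cutoff terms in }A),
\]
which is precisely the quantity estimated in Lemma~\ref{B7}. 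Together with the potential bounds \eqref{cV1}--\eqref{cV2} this yields $\langle(\phi,\psi),(g_1,g_2)\rangle = O(\lambda^{-1}\cdot\|(\phi,\psi)\|_*/\lambda^{1+\varepsilon})$ per bubble, hence $O(m/\lambda^{3+\varepsilon})$ in total. In short: the missing factor $\lambda^{-1-\varepsilon}$ comes from the fact that $(\partial_\lambda U_{x_j,\lambda},\partial_\lambda V_{x_j,\lambda})$ are \emph{exact} solutions of the linearized Lane--Emden system, not from any orthogonality of $(\phi,\psi)$.
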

\begin{proof}
    Using $(u_m,v_m)=(Y_{\bar{r}, \bar{y}'', \lambda}+\phi_{\bar{r}, \bar{y}'', \lambda},Z_{\bar{r}, \bar{y}'', \lambda}+\psi_{\bar{r}, \bar{y}'', \lambda})$, we have
    \begin{align*}
        &\int_{\mathbb{R}^N}\left( -\Delta u_m+V(r,y'') u_m-|v_m|^{p-1}v_m\right)\frac{\partial Z_{\bar{r}, \bar{y}'', \lambda}}{\partial \lambda}+\left(-\Delta v_m+V(r,y'') v_m-|u_m|^{q-1}u_m \right)\frac{\partial Y_{\bar{r}, \bar{y}'', \lambda}}{\partial \lambda}\\
        =&\left\langle I_u(Y_{\bar{r}, \bar{y}'', \lambda},Z_{\bar{r}, \bar{y}'', \lambda}),\frac{\partial Y_{\bar{r}, \bar{y}'', \lambda}}{\partial \lambda}\right\rangle +\left\langle I_v(Y_{\bar{r}, \bar{y}'', \lambda},Z_{\bar{r}, \bar{y}'', \lambda}),\frac{\partial Z_{\bar{r}, \bar{y}'', \lambda}}{\partial \lambda}\right\rangle\\
        &+\left\{\left\langle-\Delta\phi+V(r,y'')\phi-pZ^{p-1}_{\bar{r}, \bar{y}'', \lambda}\psi,\frac{\partial Z_{\bar{r}, \bar{y}'', \lambda}}{\partial \lambda}\right\rangle +\left\langle-\Delta\psi+V(r,y'')\psi-q Y^{q-1}_{\bar{r}, \bar{y}'', \lambda}\phi,\frac{\partial Y_{\bar{r}, \bar{y}'', \lambda}}{\partial \lambda}\right\rangle\right\}\\
        &-\bigg\{\int\limits_{\R^N}\left(( Z_{\bar{r}, \bar{y}'', \lambda}+\psi)^{p}- Z_{\bar{r}, \bar{y}'', \lambda}^p-pZ^{p-1}_{\bar{r}, \bar{y}'', \lambda}\psi \right)\frac{\partial Z_{\bar{r}, \bar{y}'', \lambda}}{\partial \lambda}\\
        &\hspace{8em}+\left(| Y_{\bar{r}, \bar{y}'', \lambda}+\phi|^{q-1}(Y_{\bar{r}, \bar{y}'', \lambda}+\phi)- Y_{\bar{r}, \bar{y}'', \lambda}^q-qY^{q-1}_{\bar{r}, \bar{y}'', \lambda}\phi \right)\frac{\partial Y_{\bar{r}, \bar{y}'', \lambda}}{\partial \lambda} \bigg\}\\
        :=&\frac{\partial I(Y_{\bar{r}, \bar{y}'', \lambda},Z_{\bar{r}, \bar{y}'', \lambda})}{\partial \lambda}+I_1-I_2.
    \end{align*}
     
    Using \eqref{cV1}, \eqref{cV2}, Lemma \ref{B7} and the fact that
    $$-\Delta \frac{\partial Y_{\bar{r}, \bar{y}'', \lambda}}{\partial \lambda}  - p (Z_{\bar{r}, \bar{y}'',\lambda})^{p-1}\frac{\partial Z_{\bar{r}, \bar{y}'', \lambda}}{\partial \lambda}=0,$$
    we have
    \begin{equation*}
        \begin{aligned}
            I_1&= \left\langle -\Delta \frac{\partial Z_{\bar{r}, \bar{y}'', \lambda}}{\partial \lambda}  - q (Y_{\bar{r}, \bar{y}'',\lambda})^{q-1}\frac{\partial Y_{\bar{r}, \bar{y}'', \lambda}}{\partial \lambda}, \phi \right\rangle+\left\langle V(r,y'')\phi,\frac{\partial Z_{\bar{r}, \bar{y}'', \lambda}}{\partial \lambda}\right\rangle+\left\langle V(r,y'')\psi,\frac{\partial Y_{\bar{r}, \bar{y}'', \lambda}}{\partial \lambda}\right\rangle\\
            &=m\left\langle -\Delta Z_{1,h}  - q (Y_{\bar{r}, \bar{y}'',\lambda})^{q-1}Y_{1,h}, \phi \right\rangle+m\left\langle V(r, y'') \psi , Y_{1,h} \right\rangle+m\left\langle V(r, y'') \phi , Z_{1,h} \right\rangle+O\left(\frac{m}{\lambda^{3+\epsilon}}\right)\\
            &=O\left(\frac{m}{\lambda^{3+\epsilon}}\right).
        \end{aligned}
    \end{equation*}
    
    On the other hand, for $1<q<2$, by the formula
    \[
        (1+t)^q=1+qt+O(t^2),
    \]
    we can use a similar method as in Lemma \ref{N} to compute
    \begin{equation*}
        \begin{aligned}
           & \quad \left| \int\limits_{\R^N}\left(| Y_{\bar{r}, \bar{y}'', \lambda}+\phi|^{q-1}(Y_{\bar{r}, \bar{y}'', \lambda}+\phi)- Y_{\bar{r}, \bar{y}'', \lambda}^q-qY^{q-1}_{\bar{r}, \bar{y}'', \lambda}\phi
            \right)\frac{\partial Y_{\bar{r}, \bar{y}'', \lambda}}{\partial \lambda}\right|\\
            & \leq \frac{C}{\lambda} \int\limits_{\R^N} |\phi|^2Y_{\bar{r}, \bar{y}'', \lambda}^{q-2}\left|\frac{\partial Y_{\bar{r}, \bar{y}'', \lambda}}{\partial \lambda}\right|
            \leq \frac{C}{\lambda} \int\limits_{\R^N} Y_{\bar{r}, \bar{y}'', \lambda}^{q-1}|\phi|^2
            =O\left(\frac{m||\phi||^2_{*,1}}{\lambda}\right)=O\left(\frac{m}{\lambda^{3+\varepsilon}} \right).
        \end{aligned}
    \end{equation*}
    Similarly, for $q\geq 2$, we have
        \begin{equation*}
        \begin{aligned}
            &\quad \left| \int\limits_{\R^N}\left(| Y_{\bar{r}, \bar{y}'', \lambda}+\phi|^{q-1}(Y_{\bar{r}, \bar{y}'', \lambda}+\phi)- Y_{\bar{r}, \bar{y}'', \lambda}^q-qY^{q-1}_{\bar{r}, \bar{y}'', \lambda}\phi
            \right)\frac{\partial Y_{\bar{r}, \bar{y}'', \lambda}}{\partial \lambda}\right|
            \\
            &\leq  \frac{C}{\lambda} \int\limits_{\R^N} Y_{\bar{r}, \bar{y}'', \lambda}^{q-1}|\phi|^2 + Y_{\bar{r}, \bar{y}'', \lambda}|\phi|^{q}=O\left(\frac{m}{\lambda^{3+\varepsilon}} \right).
        \end{aligned}
    \end{equation*}
    Similarly, we can prove
    \[
        \left| \int\limits_{\R^N}\left(| Z_{\bar{r}, \bar{y}'', \lambda}+\psi|^{p-1}(Z_{\bar{r}, \bar{y}'', \lambda}+\psi)- Z_{\bar{r}, \bar{y}'', \lambda}^p-pZ^{p-1}_{\bar{r}, \bar{y}'', \lambda}\psi
        \right)\frac{\partial Z_{\bar{r}, \bar{y}'', \lambda}}{\partial \lambda}\right| =O\left(\frac{m}{\lambda^{3+\varepsilon}}\right).
    \]
    Therefore,
    $$|I_2|=O\left(\frac{m}{\lambda^{3+\varepsilon}}\right).$$
    Therefore, we have
    \begin{equation*}
        \begin{aligned}
             \frac{\partial I(u_m,v_m)}{\partial \lambda}=\frac{\partial I(Y_{\bar{r}, \bar{y}'', \lambda},Z_{\bar{r}, \bar{y}'', \lambda})}{\partial \lambda}+O\bigg(\frac{m}{\lambda^{3+\varepsilon}}\bigg).
        \end{aligned}
    \end{equation*}
    Using Lemma \ref{expansion_lambda}, we obtain the desired result.
\end{proof}

Using the same arguments as above and Lemma \ref{expansion_2}, we can also prove
\begin{align}
        &\left\langle I_u(u_m,v_m),\frac{\partial Y_{\bar{r}, \bar{y}'', \lambda}}{\partial \bar{r}}\right\rangle+\left\langle I_v(u_m,v_m),\frac{\partial Z_{\bar{r}, \bar{y}'', \lambda}}{\partial \bar{r}}\right\rangle \label{partial derivative_1}\\
        &=m\left( \frac{B_1}{\lambda^2}\frac{\partial V(\bar{r},\bar{y}'')}{\partial \bar{r}}+\sum\limits_{j=2}^m\frac{B_2}{\bar{r}\lambda^{N-1}|x_1-x_j|^{N-2}}+O\left(\frac{1}{\lambda^{1+\varepsilon}}\right)\right). \nonumber
\end{align}
and
\begin{equation}\label{partial derivative_2}
    \begin{aligned}
        \big\langle I_u(u_m,v_m),\frac{\partial Y_{\bar{r}, \bar{y}'', \lambda}}{\partial \bar{y}''_k}\big\rangle+\big\langle I_v(u_m,v_m),\frac{\partial Z_{\bar{r}, \bar{y}'', \lambda}}{\partial \bar{y}_k''}\big\rangle
        =m\left( \frac{B_1}{\lambda^2}\frac{\partial V(\bar{r},\bar{y}'')}{\partial \bar{y_k}''}+O\bigg(\frac{1}{\lambda^{1+\varepsilon}}\bigg)\right),\,k=3,\cdots,N.
    \end{aligned}
\end{equation}

Next we use the local Pohozaev identities, generated from translation and scaling respectively, to estimate (\ref{31}) and (\ref{32}).
\begin{lemma}
    Equations (\ref{31}) and (\ref{32}) are equivalent to
    \begin{equation}\label{lem33_1}
    \begin{aligned}
        \int_{\textbf{B}_\rho}\frac{\partial V(r,y'')}{\partial y_i}u_m v_m=O\left(\int_{\partial \textbf{B}_\rho}|\nabla\phi|^2+|\phi|^2+|\phi|^{q+1}+|\nabla \psi|^2+|\psi|^2+|\psi|^{p+1}\right),\,i=3,\cdots,N,
    \end{aligned}
    \end{equation}
    and
    \begin{equation}\label{lem33_2}
        \begin{aligned}
         \int_{\textbf{B}_\rho}\frac{1}{r}\frac{\partial(r^2 V(r,y''))}{\partial r}u_mv_m
         =o\bigg(\frac{m}{\lambda^2}\bigg)+O\left(\int_{\partial \textbf{B}_\rho}|\nabla\phi|^2+|\phi|^2+|\phi|^{q+1}+|\nabla \psi|^2+|\psi|^2+|\psi|^{p+1}\right).
    \end{aligned}
    \end{equation}
\end{lemma}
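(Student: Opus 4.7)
The strategy is to apply Pohozaev-type integration by parts to (\ref{31}) and (\ref{32}) so as to isolate the bulk quantities $\int_{D_\rho}\partial_i V\cdot u_m v_m$ and $\int_{D_\rho}\frac{1}{r}\partial_r(r^2V)u_m v_m$, absorbing the remaining pieces into either boundary integrals controllable by norms of $(\phi,\psi)$, or into residual terms of size $o(m/\lambda^2)$. The pointwise identities I will use are
\[
(-\Delta u)\partial_i v+(-\Delta v)\partial_i u=\partial_j\bigl[-\partial_j u\,\partial_i v-\partial_j v\,\partial_i u+\delta_{ij}\nabla u\cdot\nabla v\bigr]
\]
for translations, and, writing $D^*=\langle y,\nabla\cdot\rangle$,
\[
(-\Delta u)D^*v+(-\Delta v)D^*u=\partial_k\bigl[-\partial_k u\,D^*v-\partial_k v\,D^*u+y_k\nabla u\cdot\nabla v\bigr]-(N-2)\nabla u\cdot\nabla v
\]
for scaling, together with $V(u\partial_i v+v\partial_i u)=\partial_i(Vuv)-\partial_i V\cdot uv$, its scaling analog, and the perfect-divergence identities for $v^p\partial_i v+u^q\partial_i u$ and $v^pD^*v+u^qD^*u$ in terms of $\frac{v^{p+1}}{p+1}+\frac{u^{q+1}}{q+1}$.

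For (\ref{32}), integrating each of the above divergence identities over $D_\rho$ and applying the divergence theorem collapses the Laplacian, nonlinear, and $V$-divergence pieces into $\partial D_\rho$-integrals, leaving only $-\int_{D_\rho}\partial_i V\cdot u_m v_m$ as a bulk remainder. Since $\rho\in(2\delta,5\delta)$, the cutoff $\xi$ vanishes on $\partial D_\rho$, so $Y_{\bar r,\bar y'',\lambda}=Z_{\bar r,\bar y'',\lambda}=0$ there and the boundary traces of $u_m,v_m$ reduce to $\phi,\psi$. Young's inequality then bounds the resulting boundary integrand by $|\nabla\phi|^2+|\nabla\psi|^2+|\phi|^2+|\psi|^2+|\phi|^{q+1}+|\psi|^{p+1}$, giving (\ref{lem33_1}) for $i\geq 3$. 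For $i=1,2$, the left side of (\ref{lem33_1}) vanishes trivially since $\partial_i V(r,y'')$ is odd in the angular variable of $y'$ (respectively, odd in $y_2$) while $u_m v_m$ is $\mathbb{Z}_m$-rotationally symmetric and even in $y_2$, so the conclusion is automatic in those directions.

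For (\ref{31}), the same program with $D^*$ in place of $\partial_i$ collects the bulk into
\[
\mathcal{B}:=-(N-2)\!\int_{D_\rho}\!\!\nabla u_m\cdot\nabla v_m-N\!\int_{D_\rho}\!Vu_m v_m+N\!\int_{D_\rho}\!\Big[\tfrac{v_m^{p+1}}{p+1}+\tfrac{u_m^{q+1}}{q+1}\Big]-\!\int_{D_\rho}\!\langle y,\nabla V\rangle u_m v_m,
\]
modulo boundary terms. Using the critical hyperbola $N-2=\tfrac{N}{p+1}+\tfrac{N}{q+1}$ and integrating $\int\nabla u_m\cdot\nabla v_m$ by parts once against $v_m$ and once against $u_m$, the $-(N-2)\int\nabla u_m\cdot\nabla v_m+N\int[\ldots]$ part collapses to $-\tfrac{N}{p+1}\int v_m R_1-\tfrac{N}{q+1}\int u_m R_2+(N-2)\int Vu_m v_m$ plus boundary, where $R_1:=-\Delta u_m+Vu_m-v_m^p$ and $R_2:=-\Delta v_m+Vv_m-u_m^q$. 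Consequently
\[
\int_{D_\rho}(\langle y,\nabla V\rangle+2V)u_m v_m=-\tfrac{N}{p+1}\!\int v_m R_1-\tfrac{N}{q+1}\!\int u_m R_2+(\text{bdry terms}).
\]
Because $V$ depends only on $(|y'|,y'')$, one has $\langle y,\nabla V\rangle+2V=\tfrac{1}{r}\partial_r(r^2V)+\sum_{k\geq 3}y_k\partial_k V$; writing $y_k=\bar y_k''+(y_k-\bar y_k'')$ and invoking (\ref{lem33_1}) for the first summand, while exploiting the bubble localization $|y_k-\bar y_k''|\lesssim 1/\lambda$ on the essential support of $YZ$ for the second summand (which contributes $O(m/\lambda^3)=o(m/\lambda^2)$), reduces matters to (\ref{lem33_2}).

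The main obstacle is the residual estimate $\int v_m R_1,\int u_m R_2=o(m/\lambda^2)$. From (\ref{linear-equation2}) one has $R_1=\sum_l c_l\sum_j pZ_{x_j,\lambda}^{p-1}Z_{j,l}$, and analogously for $R_2$. A naive bound combining $|c_l|\lesssim\lambda^{-n_l-1-\varepsilon}$ with $\int v_m pZ_{x_j,\lambda}^{p-1}Z_{j,l}\lesssim m\lambda^{n_l}$ only yields $O(m/\lambda^{1+\varepsilon})$, which is insufficient. The decisive observation is the scale- and translation-invariance identity
\[
\int_{\R^N}V_{x_j,\lambda}^p\,\partial_{\Box_l}V_{x_j,\lambda}=\partial_{\Box_l}\!\int_{\R^N}\!\frac{V_{x_j,\lambda}^{p+1}}{p+1}=0,
\]
which forces the on-bubble leading contribution to $\int v_m R_1$ to cancel, leaving only cross-bubble, cutoff, and $(\phi,\psi)$-corrections, each of which can be shown to be $o(m/\lambda^2)$ using Lemmas \ref{B1}--\ref{B7} together with Proposition \ref{existence2}; this closes the argument.
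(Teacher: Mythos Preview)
Your argument is correct and follows the same Pohozaev framework as the paper, but the two proofs diverge at the crucial residual estimate $\int_{D_\rho} v_m R_1+\int_{D_\rho} u_m R_2=o(m/\lambda^2)$. The paper does \emph{not} rely on the bare $c_l$ bound from Proposition~\ref{existence2}; instead it first upgrades the coefficients to $c_1=O(\lambda^{-1})$ and $c_l=o(\lambda^{-3})$ for $l\ge2$ (see \eqref{cofficient_1}), an improvement that requires the reduced-energy expansions \eqref{partial derivative_3}--\eqref{partial derivative_2} and hence Lemma~\ref{expansion_lambda}. With these sharper $c_l$ in hand, the paper only needs the soft cancellation bound \eqref{cofficient_2} at the level $o(m\lambda^{n_l})$. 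Your route is the dual trade-off: you keep the weaker $|c_l|\lesssim\lambda^{-n_l-1-\varepsilon}$ from Proposition~\ref{existence2} but compensate by quantifying the cancellation remainder sharply—after the on-bubble term drops via $\partial_{\Box_l}\int V_{x_j,\lambda}^{p+1}=0$, the cutoff piece is $O(m\lambda^{n_l-(N-1)})$, the cross-bubble piece is $O(m\lambda^{n_l}\cdot(m/\lambda)^{N-2})=O(m\lambda^{n_l-2})$, and the $(\phi,\psi)$ piece is $O(m\lambda^{n_l}\|(\phi,\psi)\|_*)$, each of which, multiplied by $|c_l|$, indeed lands in $o(m/\lambda^2)$. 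This is a legitimate and somewhat more self-contained alternative, since it avoids invoking the appendix energy expansions; the paper's approach, on the other hand, yields the refined $c_l$ estimates as a by-product, which are of independent use elsewhere in Section~3. One minor remark: your dispatch of the $i=1$ case of \eqref{lem33_1} by ``oddness in the angular variable'' should be phrased as Fourier orthogonality of $\cos\theta$ against $\mathbb{Z}_m$-periodic functions for $m\ge2$, since the bubble configuration is not literally even in $y_1$ when $m$ is odd.
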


\begin{proof}

    In the rest of the proof, we denote $(u_m,v_m)$ as $(u,v)$ for convenience. Let
    \[
        H(y,u,v)=-V(y)uv+\frac{|v|^{p+1}}{p+1}+\frac{|u|^{q+1}}{q+1},
    \]
    then (\ref{32}) is equivalent to
    \begin{equation}\label{poho_scal}
        \int_{\textbf{B}_\rho} \left( -\Delta u\frac{\partial v}{\partial y_i}-\Delta v\frac{\partial u}{\partial y_i} \right) = \int_{\textbf{B}_\rho} \left( \frac{\partial}{\partial y_i} H(y,u(y),v(y))-\frac{\partial V}{\partial y_i}(r,y'')uv \right).
    \end{equation}
    Integrating (\ref{poho_scal}) by parts yields
    \begin{equation*}
        \begin{aligned}
                \int_{\textbf{B}_\rho} \frac{\partial V}{\partial y_i}(r,y'')uv &=\int_{\partial \textbf{B}_{\rho}} \frac{\partial u}{ \pa\nu}\frac{\partial v}{ \pa y_i}+\frac{\partial v}{ \pa\nu}\frac{\partial u}{ \pa y_i}-\nabla u\cdot\nabla v \nu_i+\int_{\partial \textbf{B}_{\rho}}H(y,u,v)\nu_i\\
                &=O\left(\int_{\partial \textbf{B}_\rho}|\nabla\phi|^2+|\phi|^2+|\phi|^{q+1}+|\nabla \psi|^2+|\psi|^2+|\psi|^{p+1}\right),
        \end{aligned}
    \end{equation*}
    since $u=\phi$ on $\pa \textbf{B}_\rho$, which is the first equality in this Lemma.

    Similarly, integrating (\ref{31}) by parts yields
    \begin{equation}\label{poho_trans}
        \begin{aligned}
            & \quad (2-N)\int_{\textbf{B}_\rho} \nabla u\cdot \nabla v -\int_{\textbf{B}_\rho}\left( NV(y)+\langle y, \nabla V(y)\rangle\right)uv +\frac{N}{p+1}\int_{\textbf{B}_\rho}v^{p+1}+ \frac{N}{q+1} \int_{\textbf{B}_{\rho}}u^{q+1}\\
            &=\int_{\partial \textbf{B}_{\rho}} \bigg( \frac{\pa u}{\pa \nu}(y\cdot \nabla v)+\frac{\pa v}{\pa \nu}(y\cdot \nabla u) \bigg) -\int_{\partial \textbf{B}_{\rho}}(y\cdot\nu)\nabla u\cdot\nabla v+
            \int_{\partial \textbf{B}_{\rho}}(y\cdot\nu)H(y,u,v)\\
            &=O\left(\int_{\partial \textbf{B}_\rho}|\nabla\phi|^2+|\phi|^2+|\phi|^{q+1}+|\nabla \psi|^2+|\psi|^2+|\psi|^{p+1}\right).
        \end{aligned}
    \end{equation}
    From Proposition \ref{existence2}, we obtain
    \begin{align}
            & \quad \int_{\textbf{B}_\rho} \nabla u\cdot \nabla v + \int_{\textbf{B}_\rho}V(y)uv-\int_{\textbf{B}_\rho}v^{p+1}=\int_{\pa \textbf{B}_\rho} \frac{\pa u}{\pa \nu}v +\sum\limits_{l=1}^{N} c_l \sum\limits_{j=1}^{m} \int_{\R^N}pZ_{x_j,\lambda}^{p-1}Z_{j,l}v \label{lemma3_1} \\
            &=O\left( \int_{\pa \textbf{B}_{\rho}}|\nabla \phi|^2+\psi^2 \right)+O\left(\sum\limits_{l=1}^{N}c_l\cdot m\lambda^{n_l}||\psi||_{*,2} \right)+\sum\limits_{l=1}^{N} c_l \sum\limits_{j=1}^{m} \int_{\R^N}pZ_{x_j,\lambda}^{p-1}Z_{\bar{r}, \bar{y}'', \lambda}Z_{j,l}, \nonumber
    \end{align}
    and
    \begin{equation}\label{lemma3_2}
        \begin{aligned}
            \int_{\textbf{B}_\rho} \nabla u\cdot \nabla v&+ \int_{\textbf{B}_\rho}V(y)uv-\int_{\textbf{B}_\rho}u^{q+1}=\int_{\pa \textbf{B}_\rho} \frac{\pa v}{\pa \nu}u +\sum\limits_{l=1}^{N} c_l \sum\limits_{j=1}^{m} \int_{\R^N}qY_{x_j,\lambda}^{q-1}Y_{j,l}u\\
            &=O\left(\int_{\pa \textbf{B}_{\rho}}|\nabla \psi|^2+\phi^2 \right)
            +O\left(\sum\limits_{l=1}^{N}c_l\cdot m\lambda^{n_l}||\phi||_{*,1} \right)+\sum\limits_{l=1}^{N} c_l \sum\limits_{j=1}^{m} \int_{\R^N}qY_{x_j,\lambda}^{q-1}Y_{\bar{r}, \bar{y}'', \lambda}Y_{j,l},
        \end{aligned}
    \end{equation}
    where we have used
    \begin{equation*}
        \begin{aligned}
             \int\limits_{\R^N}  \sum\limits_{j=1}^{m} q Y_{x_j,\lambda}^{q-1} Y_{j,l} \phi
             = O\left( m\lambda^{n_l}||\phi||_{*,1}\int\limits_{\R^N} Y_{x_1,\lambda}^{q} \sum\limits_{j=1}^{m}\frac{\lambda^{\frac{N}{q+1}}}{(1+\lambda|y-x_{j}|)^{\frac{N}{q+1}+\tau}}\ \dy\right)
             =O(m\lambda^{n_l}||\phi||_{*,1}),
        \end{aligned}
    \end{equation*}
    and similarly,
    \[
        \int\limits_{\R^N} \sum\limits_{j=1}^{m} p Z_{x_j,\lambda}^{p-1} Z_{j,l}\psi=O(m\lambda^{n_l}||\psi||_{*,2}).
    \]

    Next, we estimate $c_l$ and subsequently the terms that involve $c_l$ in (\ref{lemma3_1}) and (\ref{lemma3_2}). Similarly to the argument in Proposition \ref{prop31}, using (\ref{linear-equation2}), \eqref{partial derivative_3}, \eqref{partial derivative_1} and \eqref{partial derivative_2}, we obtain the following algebra equations for $c_l$:
    \begin{equation*}
        \begin{aligned}
         \sum_{l=1}^{N}\sigma_{1l}' c_l=\big\langle I_u(u_m,v_m),\frac{\partial Y_{\bar{r}, \bar{y}'', \lambda}}{\partial \lambda}\big\rangle+\big\langle I_v(u_m,v_m),\frac{\partial Z_{\bar{r}, \bar{y}'', \lambda}}{\partial \lambda}\big\rangle=O(\frac{m}{\lambda^{3}}),\\
        \sum_{l=1}^{N}\sigma_{2l}' c_l=\big\langle I_u(u_m,v_m),\frac{\partial Y_{\bar{r}, \bar{y}'', \lambda}}{\partial \bar{r}}\big\rangle+\big\langle I_v(u_m,v_m),\frac{\partial Z_{\bar{r}, \bar{y}'', \lambda}}{\partial \bar{r}}\big\rangle=O(\frac{m}{\lambda^{1+\varepsilon}}),\\
             \sum_{l=1}^{N}\sigma_{kl}' c_l=\big\langle I_u(u_m,v_m),\frac{\partial Y_{\bar{r}, \bar{y}'', \lambda}}{\partial \bar{y}''_k}\big\rangle+\big\langle I_v(u_m,v_m),\frac{\partial Z_{\bar{r}, \bar{y}'', \lambda}}{\partial \bar{y}_k''}\big\rangle=O(\frac{m}{\lambda^{1+\varepsilon}}),\,\,k=3,\cdots,N,
        \end{aligned}
    \end{equation*}
    where
    $$\sigma_{kl}'=\Bigg( \int_{\R^N}\big(\sum\limits_{j=1}^{m} q Y_{x_j,\lambda}^{q-1} Y_{j,l},  \sum\limits_{j=1}^{m} p Z_{x_j,\lambda}^{p-1} Z_{j,l}\big)\cdot W_k' \Bigg)=\delta_{kl}m\lambda^{2n_k}\bar{a}_k+o(m\lambda^{n_l+n_k}),
    \,\bar{a}_k\neq 0,\, k=1,\cdots,N,$$
    and
    \begin{equation*}
        \begin{aligned}
           W_k'=\begin{cases}
                  (\frac{\partial Y_{\bar{r}, \bar{y}'', \lambda}}{\partial \lambda},\frac{\partial Z_{\bar{r}, \bar{y}'', \lambda}}{\partial \lambda}),\quad k=1,\\
                 (\frac{\partial Y_{\bar{r}, \bar{y}'', \lambda}}{\partial \bar{r}},\frac{\partial Z_{\bar{r}, \bar{y}'', \lambda}}{\partial \bar{r}}),\quad k=2,\\
                  (\frac{\partial Y_{\bar{r}, \bar{y}'', \lambda}}{\partial \bar{y}''_k},\frac{\partial Z_{\bar{r}, \bar{y}'', \lambda}}{\partial \bar{y}''_k}),\quad k=3,\cdots,N,
            \end{cases}
        \end{aligned}
    \end{equation*}
    which yields
    \begin{align}\label{3.211}
         c_1=\frac{1}{\lambda^{2n_1}}O\bigg(\frac{1}{\lambda^{3}}\bigg)=O\bigg(\frac{1}{\lambda}\bigg),
    \end{align}
    and
    \begin{align}\label{3.212}
        c_i=\frac{1}{\lambda^{2n_i}}O\bigg(\frac{1}{\lambda^{1+\varepsilon}}\bigg)=O\bigg(\frac{1}{\lambda^{3+\varepsilon}}\bigg),\quad i=2,\cdots,N.
    \end{align}
    Next, we can compute that
    \begin{equation}\label{cofficient_2}
        \begin{aligned}
            &\sum\limits_{j=1}^{m}\int_{\R^N}pZ_{x_j,\lambda}^{p-1}Z_{\bar{r}, \bar{y}'', \lambda}Z_{j,l}+qY_{x_j,\lambda}^{q-1}Y_{\bar{r}, \bar{y}'', \lambda}Y_{j,l}\\
            =&m\int_{\R^N}\left(\frac{p}{p+1}\frac{\partial Z^{p+1}_{x_1,\lambda}}{\partial \Box_l}+\frac{q}{q+1}\frac{\partial Y^{q+1}_{x_1,\lambda}}{\partial \Box_l} \right)+o(m\lambda^{n_l})\\
            =&o(m\lambda^{n_l}),
        \end{aligned}
    \end{equation}
    where $\Box_l$ denotes $\lambda$ if $l=1$, $\bar{r}$ if $l=2$ and $\bar{y}_l''$ if $l\geq3$.

    Inserting (\ref{lemma3_1})-(\ref{cofficient_2}) into (\ref{poho_trans}), we obtain
    \begin{align}\label{poho_trans_1}
        \int_{\textbf{B}_\rho}(2V(y)+\langle y,\nabla V(y)\rangle)uv
         =o\left(\frac{m}{\lambda^2}\right)+O\left(\int_{\partial \textbf{B}_\rho}|\nabla\phi|^2+|\phi|^2+|\phi|^{q+1}+|\nabla \psi|^2+|\psi|^2+|\psi|^{p+1}\right).
    \end{align}
    Note that
    \begin{equation*}
        \begin{aligned}
            \int_{\textbf{B}_\rho}\langle y'',\nabla_{y''} V(y)\rangle uv&=\sum\limits_{k=3}^N\bar{y}_k''\int_{\textbf{B}_\rho}\frac{\partial V(r,y'')}{\partial {y}_k''} uv+\int_{\textbf{B}_\rho}\langle y''-\bar{y}'',\nabla_{y''} V(y)\rangle uv\\
            &=O\left(\int_{\partial \textbf{B}_\rho}|\nabla\phi|^2+|\phi|^2+|\phi|^{q+1}+|\nabla \psi|^2+|\psi|^2+|\psi|^{p+1}\right)+o\left(\frac{m}{\lambda^2}\right),
        \end{aligned}
    \end{equation*}
    then we can rewrite (\ref{poho_trans_1}) as
    \[
        \int_{\textbf{B}_\rho}\left(2V(y)+r\frac{\partial V(r,y'')}{\partial r} \right)uv
         =o\left(\frac{m}{\lambda^2} \right)+O\left(\int_{\partial \textbf{B}_\rho}|\nabla\phi|^2+|\phi|^2+|\phi|^{q+1}+|\nabla \psi|^2+|\psi|^2+|\psi|^{p+1}\right),
    \]
    which is the second equality in this Lemma.
\end{proof}

Then we need to estimate boundary integral terms in \eqref{lem33_1} and \eqref{lem33_2}.
\begin{lemma}\label{par_1}
    $$\int_{\textbf{B}_{4\delta}\backslash \textbf{B}_{3\delta}}
    |\phi|^2+|\phi|^{q+1}+|\psi|^2 +|\psi|^{p+1}=o\left(\frac{m}{\lambda^2}\right).$$
\end{lemma}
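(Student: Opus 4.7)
The plan is to combine the pointwise decay of $(\phi,\psi)$ given by Proposition~\ref{existence2} with the observation that $D_{4\delta}\setminus D_{3\delta}$ stays uniformly far from \emph{every} bubble center. Since $(|x_j'|,x_j'')=(\bar r,\bar y'')$ with $|(\bar r,\bar y'')-(r_0,y_0'')|\leq\vartheta$, for any $y\in D_{4\delta}\setminus D_{3\delta}$ one has the elementary bound
\begin{equation*}
|y-x_j|\;\geq\;\bigl|\bigl(|y'|-\bar r,\,y''-\bar y''\bigr)\bigr|\;\geq\;\bigl|(|y'|,y'')-(r_0,y_0'')\bigr|-\vartheta\;\geq\;3\delta-\vartheta\;\geq\;2\delta,
\end{equation*}
uniformly in $j$, after shrinking $\vartheta$ if necessary. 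Every weighted sum appearing in the definitions of $\|\cdot\|_{*,1}$ and $\|\cdot\|_{*,2}$ can therefore be evaluated at $\lambda|y-x_j|\gtrsim\delta\lambda$.

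Using $\|(\phi,\psi)\|_*\leq C\lambda^{-1-\varepsilon}$, I would then estimate
\begin{equation*}
|\phi(y)|\;\leq\;\|\phi\|_{*,1}\,\lambda^{\frac{N}{q+1}}\sum_{j=1}^{m}\frac{1}{(1+\lambda|y-x_j|)^{\frac{N}{q+1}+\tau}}\;\leq\;\frac{Cm\,\|\phi\|_{*,1}}{\lambda^{\tau}}\;\leq\;\frac{Cm}{\lambda^{1+\varepsilon+\tau}},
\end{equation*}
and analogously $|\psi(y)|\leq Cm/\lambda^{1+\varepsilon+\tau}$. The decisive numerical input is that $\tau=\tfrac{N-4}{N-2}$ together with $\lambda\in[L_0 m^{\frac{N-2}{N-4}},L_1 m^{\frac{N-2}{N-4}}]$ yields $\lambda^{\tau}\sim m$, so pointwise on the annulus $|\phi(y)|,|\psi(y)|\leq C\lambda^{-1-\varepsilon}$.

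Since $|D_{4\delta}\setminus D_{3\delta}|=O(1)$, integrating the pointwise bound gives
\begin{equation*}
\int_{D_{4\delta}\setminus D_{3\delta}}\!(|\phi|^2+|\psi|^2)\leq\frac{C}{\lambda^{2+2\varepsilon}},\quad\int_{D_{4\delta}\setminus D_{3\delta}}\!|\phi|^{q+1}\leq\frac{C}{\lambda^{(q+1)(1+\varepsilon)}},\quad\int_{D_{4\delta}\setminus D_{3\delta}}\!|\psi|^{p+1}\leq\frac{C}{\lambda^{(p+1)(1+\varepsilon)}}.
\end{equation*}
Comparing with $m/\lambda^2$, the ratio for the $L^2$ term is $(m\lambda^{2\varepsilon})^{-1}\to 0$; for $|\phi|^{q+1}$ it is $(m\lambda^{q-1+(q+1)\varepsilon})^{-1}\to 0$ since $q>1$; the $|\psi|^{p+1}$ case is identical using $p>1$. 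Summing these yields the claimed $o(m/\lambda^2)$. The only real technical point is the uniform separation $|y-x_j|\geq 2\delta$ in step one; the rest is a direct quantitative consequence of $\|(\phi,\psi)\|_*\lesssim\lambda^{-1-\varepsilon}$ together with the scaling $\lambda\sim m^{(N-2)/(N-4)}$, so no genuine obstacle arises.
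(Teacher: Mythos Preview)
Your proof is correct. It takes a somewhat different route from the paper's, though both rest on the same geometric observation that every bubble center $x_j$ satisfies $(|x_j'|,x_j'')=(\bar r,\bar y'')$, hence stays at least $2\delta$ from the annulus $D_{4\delta}\setminus D_{3\delta}$.

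The paper proceeds by bounding the $L^{p+1}$ integral directly via the $\Omega_j$ decomposition and Lemma~\ref{B2}, obtaining $\int|\psi|^{p+1}=O(m\ln\lambda/\lambda^{p+1+\varepsilon})$, and then recovers the $L^2$ bound from this by H\"older on the bounded domain. You instead first extract the pointwise $L^\infty$ bound $|\phi|,|\psi|\leq C\lambda^{-1-\varepsilon}$ on the annulus (which is in fact the content of the paper's \emph{next} lemma, Lemma~\ref{par_2}) and then integrate trivially over the bounded set. Your route is more elementary and yields sharper bounds (no factor of $m$ or $\ln\lambda$ in the numerator); the paper's organization keeps the $L^\infty$ estimate separate because it is needed again for the gradient bound via elliptic regularity. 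Either argument suffices here.
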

\begin{proof}
From Proposition \ref{existence2}, we obtain that
    \begin{equation*}
        \begin{aligned}
            &\quad \int_{\textbf{B}_{4\delta}\backslash \textbf{B}_{3\delta}} |\psi|^{p+1} \leq \big|\big|\psi\big|\big|_{*}^{p+1}\int_{D_{4\delta}\backslash D_{3\delta}}
            \left( \sum\limits_{j=1}^{m}\dfrac{\lambda^{\frac{N}{p+1}}}{(1+\lambda|z-x_j|)^{\frac{N}{p+1}+\tau}} \right)^{p+1}\\
            &\leq m\big|\big|\psi\big|\big|_{*}^{p+1}\int_{\{\textbf{B}_{4\delta}\backslash \textbf{B}_{3\delta}\}\cap \Omega_1} \lambda^N\left( \dfrac{1}{(1+\lambda|z-x_1|)^{\frac{N}{p+1}+\tau}}+\dfrac{1}{(1+\lambda|z-x_1|)^{\frac{N}{p+1}}}\cdot \sum\limits_{j=1}^{m}\frac{1}{(\lambda|x_1-x_j|)^{\tau}}\right)^{p+1}\\
            &\leq C\cdot m\big|\big|\psi\big|\big|_{*}^{p+1}\int_{B_{4\delta \lambda}(0)\backslash B_{3\delta \lambda}(0)} \frac{1}{(1+|z|)^{N}} \ \dz \\
            &\leq O\left(\frac{m \log \lambda}{\lambda^{p+1+\varepsilon}}\right).
        \end{aligned}
    \end{equation*}
    Since $\textbf{B}_{4\delta}\backslash \textbf{B}_{3\delta}$ is a bounded domain, we have
    \[
        \int_{\textbf{B}_{4\delta}\backslash \textbf{B}_{3\delta}}|\psi|^2\leq C(\delta) \left(\int_{D_{4\delta}\backslash D_{3\delta}}|\psi|^{p+1}\right)^{\frac{2}{p+1}}=O\left(\frac{(m\log \lambda)^{\frac{2}{p+1}}}{\lambda^{2+\varepsilon}}\right)=o\left(\frac{m}{\lambda^2}\right).
    \]
    Similar estimates hold for $\phi$, this completes the proof.
\end{proof}

We also have the following point-wise estimate for the error term.
\begin{lemma}\label{par_2}
    $$\big|\big| (\phi,\psi) \big|\big|_{C^1(\textbf{B}_{4\delta}\backslash \textbf{B}_{3\delta})}=O\left(\frac{1}{\lambda^{1+\varepsilon}}\right).$$
\end{lemma}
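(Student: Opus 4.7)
The plan is to combine the weighted pointwise estimate from Proposition \ref{existence2} with interior elliptic regularity on an annulus where the cut-off $\xi$ vanishes identically, so that equation \eqref{linear-equation2} collapses to a simple elliptic system with a tiny right-hand side. First choose $\delta_1,\delta_2$ with $2\delta<\delta_1<3\delta<4\delta<\delta_2<5\delta$, so that $D_{4\delta}\setminus D_{3\delta}\subset\subset D_{\delta_2}\setminus D_{\delta_1}\subset\{\xi\equiv 0\}$. All work is carried out on the enlarged annulus and then restricted.

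The first step is a pointwise $L^\infty$ bound on the enlarged annulus. Proposition \ref{existence2} gives $\|\phi\|_{*,1}+\|\psi\|_{*,2}=O(\lambda^{-1-\varepsilon})$. For $y\in D_{\delta_2}\setminus D_{\delta_1}$ and every $j$,
$$|y-x_j|\geq \sqrt{(|y'|-\bar r)^2+|y''-\bar y''|^2}\geq |(|y'|,y'')-(r_0,y_0'')|-|(\bar r,\bar y'')-(r_0,y_0'')|\geq \delta_1-\vartheta\geq c>0,$$
hence each summand in the weight is bounded by $C\lambda^{-N/(q+1)-\tau}$. Summing over $j=1,\ldots,m$ and invoking $\tau=(N-4)/(N-2)$ together with $\lambda\in[L_0m^{(N-2)/(N-4)},L_1m^{(N-2)/(N-4)}]$, so that $m\lambda^{-\tau}=O(1)$, yields
$$|\phi(y)|+|\psi(y)|\leq C(\|\phi\|_{*,1}+\|\psi\|_{*,2})\,m\lambda^{-\tau}=O(\lambda^{-1-\varepsilon}).$$

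The second step is the reduction of the equation. Since $\xi\equiv 0$ on $D_{\delta_2}\setminus D_{\delta_1}$, we have $Y_{\bar r,\bar y'',\lambda}=Z_{\bar r,\bar y'',\lambda}=Y_{x_j,\lambda}=Z_{x_j,\lambda}=0$, and their parameter derivatives $Y_{j,l},Z_{j,l}$ also vanish (as $\xi$ carries no $\lambda,\bar r,\bar y''$ dependence). Inspecting the four pieces of $l$ in \eqref{op-3}, each contains a factor of $\xi$, $\nabla\xi$, $\Delta\xi$, or one of the zero functions $Y,Z$, so $l\equiv 0$ there. The coupling terms $pZ_{\bar r,\bar y'',\lambda}^{p-1}\psi$, $qY_{\bar r,\bar y'',\lambda}^{q-1}\phi$ and the projection source $\sum_l c_l(\sum_j pZ_{x_j,\lambda}^{p-1}Z_{j,l},\sum_j qY_{x_j,\lambda}^{q-1}Y_{j,l})$ vanish for the same reason. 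Consequently \eqref{linear-equation2} reduces on $D_{\delta_2}\setminus D_{\delta_1}$ to
\begin{equation*}
-\Delta\phi+V\phi=N_1(\psi),\qquad -\Delta\psi+V\psi=N_2(\phi),
\end{equation*}
and since $Y=Z=0$ here, $|N_1(\psi)|\leq C|\psi|^p$ and $|N_2(\phi)|\leq C|\phi|^q$, both $O(\lambda^{-1-\varepsilon})$ by Step 1.

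Finally, a standard bootstrap. With $V$ bounded and the right-hand sides above of order $O(\lambda^{-1-\varepsilon})$ in $L^\infty(D_{\delta_2}\setminus D_{\delta_1})$, interior $W^{2,p_0}$-estimates with any $p_0>N$ applied on the nested annular pair $D_{4\delta}\setminus D_{3\delta}\subset\subset D_{\delta_2}\setminus D_{\delta_1}$, followed by the Sobolev embedding $W^{2,p_0}\hookrightarrow C^1$, give
$$\|(\phi,\psi)\|_{C^1(D_{4\delta}\setminus D_{3\delta})}\leq C\bigl(\|(\phi,\psi)\|_{L^\infty(D_{\delta_2}\setminus D_{\delta_1})}+\|\mathrm{RHS}\|_{L^\infty(D_{\delta_2}\setminus D_{\delta_1})}\bigr)=O(\lambda^{-1-\varepsilon}).$$
The only delicate point is Step 1: converting the weighted-norm control into an honest pointwise bound, which rests on the quantitative balance $m\lambda^{-\tau}=O(1)$ encoded in the choice of $\tau$. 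Steps 2 and 3 are essentially mechanical once the support properties of $\xi$ are tracked carefully.
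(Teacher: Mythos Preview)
Your proof is correct and follows essentially the same strategy as the paper: first convert the weighted bound $\|(\phi,\psi)\|_*=O(\lambda^{-1-\varepsilon})$ into a genuine $L^\infty$ bound on the annulus (using $|y-x_j|\ge c>0$ there together with $m\lambda^{-\tau}=O(1)$), and then apply interior $W^{2,p_0}$ elliptic regularity plus Sobolev embedding to upgrade to $C^1$.

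Your write-up is in fact slightly cleaner than the paper's on two points. First, you explicitly enlarge to $D_{\delta_2}\setminus D_{\delta_1}$ before applying the interior estimate, whereas the paper writes the $W^{2,r}$ bound directly on $D_{4\delta}\setminus D_{3\delta}$ without naming the larger domain. Second, you exploit that $\xi\equiv 0$ on the enlarged annulus to make the entire right-hand side collapse to just $N_1(\psi)$ and $N_2(\phi)$ (both of size $|\psi|^p$, $|\phi|^q$), while the paper keeps all terms of \eqref{linear-equation2} and bounds them via the global $\|\cdot\|_{**}$ estimates of Lemmas \ref{N}, \ref{l}, and the $c_l$ bounds. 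Both routes arrive at the same conclusion; yours simply avoids re-invoking those lemmas by tracking the support of $\xi$.
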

\begin{proof}
    For any $z\in \textbf{B}_{4\delta}\backslash \textbf{B}_{3\delta}$, we obtain
    \begin{equation*}
        \begin{aligned}
            |\psi(z)|&\leq \big|\big|\psi\big|\big|_{*} \sum\limits_{j=1}^{m}\dfrac{\lambda^{\frac{N}{p+1}}}{(1+\lambda|z-x_j|)^{\frac{N}{p+1}+\tau}}\leq  C(\delta)\big|\big|\psi\big|\big|_{*}\sum\limits_{j=1}^{m}\dfrac{1}{(1+\lambda|z-x_j|)^{\tau}}\\
            &\leq C(\delta)\big|\big|\psi\big|\big|_{*}=O\left(\frac{1}{\lambda^{1+\varepsilon}}\right).
        \end{aligned}
    \end{equation*}
    Similarly, we have
    \[
        |\phi(z)|\leq O\left(\frac{1}{\lambda^{1+\varepsilon}} \right),\quad\text{in } \textbf{B}_{4\delta}\backslash \textbf{B}_{3\delta}.
    \]
    Since $(\phi,\psi)$ satisfies \eqref{linear-equation2}, combine Lemma \ref{N} Lemma \ref{l} and $L^p$ estimate, we could deduce that for any $r>1$,
    \begin{equation*}
        \begin{aligned}
            &\quad \big|\big| \phi \big|\big|_{W^{2,r}(\textbf{B}_{4\delta}\backslash \textbf{B}_{3\delta})}\\
            &\leq C \big|\big| \phi \big|\big|_{L^{\infty}(\textbf{B}_{4\delta}\backslash \textbf{B}_{3\delta})}
            +C\left|\left|
             p(Z_{\bar{r}, \bar{y}'', \lambda})^{p-1} \psi-V\phi+N_1(\psi)+l_1+\sum\limits_{l=1}^{N} c_l\sum\limits_{j=1}^{m} p Z_{x_j,\lambda}^{p-1} Z_{j,l}
            \right|\right|_{L^{\infty}(\textbf{B}_{4\delta}\backslash \textbf{B}_{3\delta})}\\
            &\leq C \big|\big| \phi \big|\big|_{L^{\infty}(\textbf{B}_{4\delta}\backslash \textbf{B}_{3\delta})}
            + C\left(||N(\phi,\psi)||_{**} +||(l_1, l_2)||_{**}\right)+O\left(\frac{1}{\lambda^{2}}\right)\\
            &=O\left(\frac{1}{\lambda^{1+\varepsilon}} \right).
        \end{aligned}
    \end{equation*}
    Similar estimate holds for $\psi$. Therefore, we conclude that
    $$\big|\big| (\phi,\psi) \big|\big|_{C^1(\textbf{B}_{4\delta}\backslash \textbf{B}_{3\delta})}=O\left(\frac{1}{\lambda^{1+\varepsilon}}\right).$$
\end{proof}

As a result, we can deduce from Lemma \ref{par_1} and Lemma \ref{par_2} that there exists a $\rho \in (3\delta,4\delta)$, such that
$$\int_{\partial \textbf{B}_\rho}|\nabla\phi|^2+|\phi|^2+|\phi|^{q+1}+|\nabla \psi|^2+|\psi|^2+|\psi|^{p+1}=O\left(\frac{m}{\lambda^{2+\varepsilon}}\right).$$

\begin{lemma}\label{lemma3.4}
    For any $C^1$ function $g(r,y'')$, it holds that
    \begin{equation*}
        \begin{aligned}
            \int_{\textbf{B}_\rho} g(r,y'')u_mv_m=m\left( \frac{1}{\lambda^2}g(\bar{r},\bar{y}'')\int_{\R^N}U_{0,1}V_{0,1}+o(\frac{1}{\lambda^2})\right).
        \end{aligned}
    \end{equation*}
\end{lemma}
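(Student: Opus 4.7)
The plan is to decompose $u_m v_m = Y_{\bar{r},\bar{y}'',\lambda}Z_{\bar{r},\bar{y}'',\lambda} + Y_{\bar{r},\bar{y}'',\lambda}\psi + Z_{\bar{r},\bar{y}'',\lambda}\phi + \phi\psi$ and show that the first product produces the stated leading contribution while the remaining three are absorbed into $o(m\lambda^{-2})$.

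For the leading term I would exploit the $\mathbb{Z}_m$-symmetry of $g$, $\xi$, and the configuration $\{x_j\}$: since rotation by $2\pi/m$ in the $(y_1,y_2)$-plane preserves $g\xi^2$ and cyclically permutes the centers $x_j$, one has
\begin{equation*}
\int_{D_\rho} g\,\xi^2 \Big(\sum_i U_{x_i,\lambda}\Big)\Big(\sum_j V_{x_j,\lambda}\Big)\,\dy = m\int_{D_\rho} g\,\xi^2 U_{x_1,\lambda}V_{x_1,\lambda}\,\dy + m\sum_{k=2}^{m}\int_{D_\rho} g\,\xi^2 U_{x_1,\lambda}V_{x_k,\lambda}\,\dy.
\end{equation*}
For the diagonal piece, I rescale $z=\lambda(y-x_1)$; by the critical hyperbola $\tfrac{N}{q+1}+\tfrac{N}{p+1}=N-2$, the product $U_{x_1,\lambda}(y)V_{x_1,\lambda}(y)\,\dy$ transforms into $\lambda^{-2}U_{0,1}(z)V_{0,1}(z)\,\dz$. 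Since $U_{0,1}V_{0,1}\sim |z|^{-2(N-2)}\in L^1(\R^N)$ for $N\geq 5$, and both $\xi(x_1+z/\lambda)\to 1$ and $g(x_1+z/\lambda)\to g(\bar{r},\bar{y}'')$ pointwise on any compact set, dominated convergence yields the leading term $\lambda^{-2}g(\bar{r},\bar{y}'')\int_{\R^N} U_{0,1}V_{0,1}\,\dz + o(\lambda^{-2})$.

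The off-diagonal pieces are controlled by the convolution-type estimate
\begin{equation*}
\int_{\R^N} U_{x_1,\lambda}V_{x_k,\lambda}\,\dy \leq \frac{C}{\lambda^{2}(\lambda|x_1-x_k|)^{N-4}}
\end{equation*}
(with a harmless logarithmic factor when $N=5$), obtained after the same rescaling and an application of the appendix Lemma \ref{B1}. Using $\lambda|x_1-x_k|\gtrsim m^{2/(N-4)}\,k$ together with $\lambda\sim m^{(N-2)/(N-4)}$, the sum over $k\geq 2$ is $O(m^{-2}\log m)$, hence the full off-diagonal contribution is $o(m\lambda^{-2})$. Finally, the three cross terms involving $\phi$ and $\psi$ are estimated using $\|(\phi,\psi)\|_* = O(\lambda^{-1-\varepsilon})$ from Proposition \ref{existence2}: bounding $|Y_{\bar{r},\bar{y}'',\lambda}\psi|$, $|Z_{\bar{r},\bar{y}'',\lambda}\phi|$, and $|\phi\psi|$ pointwise by the weighted tail sums defining the $\|\cdot\|_*$-norm and integrating by the same machinery as in Lemmas \ref{B1}--\ref{B4} gives $\int_{D_\rho}|g|(|Y\psi|+|Z\phi|+|\phi\psi|)\,\dy = O(m\lambda^{-2-\varepsilon}) = o(m\lambda^{-2})$. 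The only mildly technical point is the careful bookkeeping of the $m$-fold sums, which mirrors the analysis already performed for the error $l$ in Lemma \ref{l}; no genuine obstacle appears beyond this.
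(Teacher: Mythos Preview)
Your proposal is correct and follows essentially the same route as the paper's proof: the same four-term decomposition of $u_m v_m$, the same rescaling for the diagonal $U_{x_1,\lambda}V_{x_1,\lambda}$ contribution, the same interaction bound for the off-diagonal pairs, and the same use of $\|(\phi,\psi)\|_*=O(\lambda^{-1-\varepsilon})$ for the cross terms. The only cosmetic differences are that the paper extracts the leading constant by writing $g=g(\bar r,\bar y'')+(g-g(\bar r,\bar y''))$ and invoking the computation of $I_1$ in Lemma~\ref{expansion_lambda}, whereas you invoke dominated convergence directly; and the paper cites Lemma~\ref{par_1} for the $\phi\psi$ piece while you bound it pointwise via the weighted norms (your argument is in fact the cleaner one here, since Lemma~\ref{par_1} as stated covers only the annulus). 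Two minor inaccuracies that do not affect the argument: the convolution bound $\int U_{x_1,\lambda}V_{x_k,\lambda}\le C\lambda^{-2}(\lambda|x_1-x_k|)^{-(N-4)}$ follows from Lemma~\ref{B2} rather than Lemma~\ref{B1}, and no logarithmic correction is needed in that single integral for $N=5$ (the $\log m$ you correctly pick up later comes from the summation over $k$, not from the individual pairing).
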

\begin{proof}
 For simplicity, we drop the subscript $m$.
    Since $u=Y_{\bar{r}, \bar{y}'', \lambda}+\phi$, $v=Z_{\bar{r}, \bar{y}'', \lambda}+\psi$, we have
    \begin{equation*}
        \begin{aligned}
            \int_{\textbf{B}_\rho} g(r,y'')uv=
            \int_{\textbf{B}_\rho} g(r,y'')Y_{\bar{r}, \bar{y}'', \lambda}Z_{\bar{r}, \bar{y}'', \lambda}
            +\int_{\textbf{B}_\rho} g(r,y'')\big( Y_{\bar{r}, \bar{y}'', \lambda}\psi+Z_{\bar{r}, \bar{y}'', \lambda}\phi\big)
            +\int_{\textbf{B}_\rho} g(r,y'')\phi \psi.
        \end{aligned}
    \end{equation*}
    Similarly to the estimation of $I_1$ in Lemma \ref{expansion_lambda}, we have
     \begin{equation*}
         \begin{aligned}
             \int_{\textbf{B}_\rho} g(r,y'')Y_{\bar{r}, \bar{y}'', \lambda}Z_{\bar{r}, \bar{y}'', \lambda}&=\int_{\textbf{B}_\rho} g(\bar{r},\bar{y}'')Y_{\bar{r}, \bar{y}'', \lambda}Z_{\bar{r}, \bar{y}'', \lambda}
             +\int_{\textbf{B}_\rho} \big(g(r,y'')-g(\bar{r},\bar{y}'')\big)Y_{\bar{r}, \bar{y}'', \lambda}Z_{\bar{r}, \bar{y}'', \lambda}\\
             &=\frac{m}{\lambda^2}g(\bar{r},\bar{y}'')\int_{\R^N}U_{0,1}V_{0,1}+o\left(\frac{m}{\lambda^2} \right).
         \end{aligned}
     \end{equation*}
    From \eqref{H_2} and \eqref{K_2}, we get
    \begin{equation}\label{YZ}
        \begin{aligned}
            ||Y_{\bar{r}, \bar{y}'', \lambda}||_{**,1}\leq \frac{C}{\lambda^{1+\varepsilon}},\quad
            ||Z_{\bar{r}, \bar{y}'', \lambda}||_{**,2}\leq \frac{C}{\lambda^{1+\varepsilon}}
        \end{aligned}
    \end{equation}
    In addition, similar to \eqref{I_32_2}, we can use \eqref{phi_psi_c} and \eqref{YZ} to obtain that in $\Omega_1$,
     \begin{equation*}
         \begin{aligned}
             \left| \int_{\textbf{B}_\rho} g(r,y'') Z_{\bar{r}, \bar{y}'', \lambda}\phi\right|
             &\leq \frac{C||\phi||_{*,1}}{\lambda^{1+\varepsilon}}\int_{\textbf{B}_\rho} \sum\limits_{j=1}^{m} \frac{\lambda^{\frac{N}{q+1}+2}}{(1+\lambda|y-x_j|)^{\frac{N}{q+1}+\tau+2}}\sum\limits_{j=1}^{m} \frac{\lambda^{\frac{N}{p+1}}}{(1+\lambda|y-x_j|)^{\frac{N}{p+1}+\tau}}\\
             &\leq C\frac{m||\phi||_{*,1}}{\lambda^{1+\varepsilon}} \int_{\textbf{B}_\rho\cap \Omega_1} \frac{\lambda^{\frac{N}{q+1}+2}}{(1+\lambda|y-x_1|)^{\frac{N}{q+1}+2}}\cdot \frac{\lambda^{\frac{N}{p+1}}}{(1+\lambda|y-x_ 1|)^{\frac{N}{p+1}}}\\
             &=O\left(\frac{m||\phi||_{*,1}\log \lambda}{\lambda^{1+\varepsilon}} \right)
             =O\left(\frac{m}{\lambda^{2+\varepsilon}} \right).
         \end{aligned}
     \end{equation*}
     We can get a similar estimate of $\int_{\textbf{B}_\rho} g(r,y'') Y_{\bar{r}, \bar{y}'', \lambda}\psi$. Thus, we have
     \[
         \left|\int_{\textbf{B}_\rho} g(r,y'')\big( Y_{\bar{r}, \bar{y}'', \lambda}\psi+Z_{\bar{r}, \bar{y}'', \lambda}\phi\big)\right|\leq  O\left(\frac{m}{\lambda^{2+\varepsilon}}\right).
     \]
    In a similar way, we can prove
    \begin{equation*}
        \begin{aligned}
            \int_{\textbf{B}_\rho} g(r,y'')\phi \psi&\leq Cm||\phi||_{*,1}||\psi||_{*,2}\int_{D_\rho\cap \Omega_1} \frac{\lambda^{N-2}}{(1+\lambda|y-x_1|)^{N-2}}\\
            &=O(m||\phi||_{*,1}||\psi||_{*,2})=O\left(\frac{m}{\lambda^{2+\varepsilon}} \right).
        \end{aligned}
    \end{equation*}
     Combine Lemma \ref{par_1} and the above estimates, we have
     \begin{equation*}
         \begin{aligned}
             \int_{\textbf{B}_\rho} g(r,y'')u_mv_m=m\left( \frac{1}{\lambda^2}g(\bar{r},\bar{y}'')\int_{\R^N}U_{0,1}V_{0,1}+o(\frac{1}{\lambda^2})\right).
         \end{aligned}
     \end{equation*}
\end{proof}

\begin{proof}[\bf{Proof of Theorem \ref{Thm2}}]
Apply Lemma \ref{lemma3.4}, we obtain that \eqref{lem33_1} and \eqref{lem33_2} are equivalent to
 \begin{equation*}
    \begin{aligned}
        m\left(\frac{1}{\lambda^2}\frac{\partial V(\bar{r},\bar{y}'')}{\partial \bar{y}_i}\int_{\R^N}U_{0,1} V_{0,1}+o(\frac{1}{\lambda^2})\right)=o\left(\frac{m}{\lambda^2}\right),\,i=1,\cdots,N,
    \end{aligned}
    \end{equation*}
    and
    \begin{equation*}
        \begin{aligned}
         m\left(\frac{1}{\lambda^2}
         \frac{1}{\bar{r}}\frac{\partial(\bar{r}^2 V(\bar{r},\bar{y}''))}{\partial \bar{r}}
         \int_{\R^N}U_{0,1} V_{0,1}+o(\frac{1}{\lambda^2})\right)=o\left(\frac{m}{\lambda^2}\right).
    \end{aligned}
    \end{equation*}
    By virtue of Proposition \ref{prop31}, we need to find $(\lambda,\bar{r},\bar{y}'')$ such that
    \begin{equation}\label{equ_to_rylambda}
    \begin{cases}
        \dfrac{\partial(\bar{r}^2 V(\bar{r},\bar{y}''))}{\partial \bar{r}}=o(1), \\ 
        \dfrac{\partial V(\bar{r},\bar{y}'')}{\partial \bar{y}_i}=o(1),\hspace{1em}i=3,\cdots,N, \\ 
        -\dfrac{B_1}{\lambda^3}V(\bar{r},\bar{y}'')+\dfrac{B_3m^{N-2}}{\lambda^{N-1}}=O\left(\dfrac{1}{\lambda^{3+\varepsilon}}\right).
    \end{cases}
    \end{equation}

Let $\lambda=tm^{\frac{N-2}{N-4}}$, for $t\in [L_0,L_1]$. Then, the third equation of \eqref{equ_to_rylambda} is equivalent to
\begin{align}\label{equ_to_lambda_2}
-\frac{B_1}{t^3}V(\bar{r},\bar{y}'')+\frac{B_3}{t^{N-1}}=o(1),\hspace{1em}t\in [L_0,L_1].
\end{align}
Let
\[
    \bar{F}(t, \bar{r}, \bar{y}'')=\Big( \nabla_{\bar{r},\bar{y}''} \big(\bar{r}^2V(\bar{r},\bar{y}'')\big),-\frac{B_1}{t^3}V(\bar{r},\bar{y}'')+\frac{B_3}{t^{N-1}}\Big).
\]
Then
\[
    deg\Big(\bar{F}(t, \bar{r}, \bar{y}''),\,[L_0,L_1]\times B_{\theta}\big( (r_0,y''_0)\big)\Big) =-deg\Big(\nabla_{\bar{r},\bar{y}''} \big(\bar{r}^2V(\bar{r},\bar{y}'')\big),\, B_{\theta}\big( (r_0,y''_0)\big)\Big)\neq0.
\]
So, \eqref{equ_to_rylambda} have a solution $t_m\in [L_0,L_1],\, (\bar{r}_m,\bar{y}''_m)\in B_{\theta}\big( (r_0,y''_0)\big)$. This completes the proof.

\end{proof}

\appendix

\section{Asymptotic behavior and non-degeneracy of Ground State }

 The following sharp asymptotic behavior and the non-degeneracy of the ground state $(U_{0,1}, V_{0,1})$ of \eqref{lane-embden} are important.

\begin{lemma}[Hulshof and Van der Vorst \cite{HV}] \label{L1}
 Assume that $p\leq \frac{N+2}{N-2} \leq q.$ There exist some positive constants $a=a_{N,p}$ and $b=b_{N,p}$ depending only on $N$ and $p$ such that
$$ \lim\limits_{r\to \infty} r^{N-2} V_{0,1}(r) =b ;$$
while
\begin{equation*}
\begin{cases}
    \lim\limits_{r\to\infty} r^{(N-2)p-2}U_{0,1}(r) =a,  \;\; &\hbox{if } p<\frac{N}{N-2},\\
    \lim\limits_{r\to\infty} \frac{r^{N-2}}{\ln r}U_{0,1}(r) =a,  \;\; &\hbox{if } p=\frac{N}{N-2},\\
    \lim\limits_{r\to\infty} r^{N-2}U_{0,1}(r) =a,  \;\; &\hbox{if } p>\frac{N}{N-2}.
\end{cases}
\end{equation*}
Furthermore, in the last case, we have $b^p=a( (N-2)p-2  )(N-(N-2)p).$
\end{lemma}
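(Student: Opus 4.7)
My strategy is to convert the differential system into integral equations via the Newtonian potential and read off the asymptotics from the decay of the Riesz convolutions. Let $G(y) = c_N|y|^{-(N-2)}$ be the fundamental solution of $-\Delta$ on $\R^N$. Since $(U_{0,1},V_{0,1})$ is a positive radial ground state that decays at infinity, standard distributional arguments give the representations
\begin{equation*}
V_{0,1}(y) = c_N\!\int_{\R^N}\!\frac{U_{0,1}(z)^{q}}{|y-z|^{N-2}}\,\dz,\qquad
U_{0,1}(y) = c_N\!\int_{\R^N}\!\frac{V_{0,1}(z)^{p}}{|y-z|^{N-2}}\,\dz.
\end{equation*}
A preliminary bootstrap (using radial monotonicity and the sub/critical control $p\le \frac{N+2}{N-2}\le q$) gives crude polynomial decay, enough to justify Fubini below.

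\textbf{Step 1: decay of $V_{0,1}$.} Because $q\ge\frac{N+2}{N-2}$, we have $(N-2)q\ge N+2>N$, so the preliminary decay of $U_{0,1}$ already ensures $U_{0,1}^q\in L^1(\R^N)$. Writing $|y-z|^{-(N-2)} = |y|^{-(N-2)}(1+O(|z|/|y|))$ uniformly on bounded $|z|$ and splitting the integral into $\{|z|\le |y|/2\}$ and its complement (the latter giving a negligible contribution by the integrability of $U_{0,1}^q$), one obtains
\begin{equation*}
\lim_{|y|\to\infty}|y|^{N-2}V_{0,1}(y) = c_N\!\int_{\R^N}U_{0,1}^{q}\,\dz =: b>0.
\end{equation*}

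\textbf{Step 2: decay of $U_{0,1}$ in the three regimes.} Feed $V_{0,1}(z)\sim b|z|^{-(N-2)}$ into the integral formula for $U_{0,1}$, so that $V_{0,1}^p(z)\sim b^p|z|^{-(N-2)p}$. A standard computation of the Riesz potential of $|z|^{-(N-2)p}$ distinguishes three cases according to whether $(N-2)p$ exceeds, equals, or lies below $N$:
\begin{itemize}
\item If $p>\frac{N}{N-2}$, then $V_{0,1}^p\in L^1$ and the same splitting argument as in Step 1 yields $U_{0,1}(y)\sim a|y|^{-(N-2)}$ with $a = c_N\int V_{0,1}^p$.
\item If $p=\frac{N}{N-2}$, the integral over the annulus $1\le|z|\le|y|$ produces the logarithm: $U_{0,1}(y)\sim a|y|^{-(N-2)}\ln|y|$.
\item If $p<\frac{N}{N-2}$, the integral is dominated by the range $|z|\sim|y|$, giving $U_{0,1}(y)\sim a|y|^{-((N-2)p-2)}$.
\end{itemize}
In each case the constant $a$ can be computed explicitly from $b$ and integrals involving the Riesz kernel.

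\textbf{Step 3: the algebraic identity.} Insert the ansatz $U_{0,1}(y)\sim a|y|^{-s}$, $V_{0,1}(y)\sim b|y|^{-(N-2)}$ into $-\Delta U_{0,1} = V_{0,1}^{p}$ and match leading orders. Since $-\Delta(|y|^{-s}) = s(N-2-s)|y|^{-s-2}$ away from the origin, matching the exponents forces $s = (N-2)p-2$ and matching coefficients gives $a\bigl((N-2)p-2\bigr)\bigl(N-(N-2)p\bigr)=b^{p}$; this is the regime in which the two factors have a definite sign and the identity is consistent.

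\textbf{Main obstacle.} The delicate point is to upgrade a priori bounds $U_{0,1}(y)\lesssim |y|^{-s}$ to genuine limits as $|y|\to\infty$, and to handle the borderline case $p=\frac{N}{N-2}$ where the logarithm must be isolated cleanly. This requires a careful splitting of the convolution integral into near-, matched- and far-field contributions together with the Cauchy mean-value argument on the kernel $|y-z|^{-(N-2)}$, and leverages the radial monotone decay of $U_{0,1}$ and $V_{0,1}$ (known from \cite{Wang,HV}) to rule out oscillations in the remainders.
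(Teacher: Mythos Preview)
The paper does not give its own proof of this lemma; it is recorded in Appendix~A as a known result on the sharp asymptotics of Lane--Emden ground states, implicitly attributed to Hulshof--Van der Vorst \cite{HV} and the refinement \cite{GHPY}. There is therefore no in-paper argument to compare against. Your integral-representation strategy (Green's function plus Riesz-potential asymptotics, with a near/far field splitting) is exactly the standard route used in that literature, and Steps~1--2 are correctly outlined modulo the bootstrap you already flag.

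There is, however, a concrete mismatch in Step~3. Matching $-\Delta(a|y|^{-s})=s(N-2-s)|y|^{-s-2}$ against $b^{p}|y|^{-(N-2)p}$ forces $s=(N-2)p-2$, which is the decay exponent of $U_{0,1}$ in the \emph{first} case $p<\tfrac{N}{N-2}$, not the last. In the last case $p>\tfrac{N}{N-2}$ one has $s=N-2$, the leading part of $U_{0,1}$ is harmonic, and your matching argument would only recover the coefficient of the next-order correction, not $a$. In fact the identity $b^{p}=a\bigl((N-2)p-2\bigr)\bigl(N-(N-2)p\bigr)$ with $a,b>0$ is sign-inconsistent for $p>\tfrac{N}{N-2}$ (the second factor is negative), so ``last case'' in the stated lemma is almost certainly a typo for ``first case''. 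Your vague closing remark that ``this is the regime in which the two factors have a definite sign'' suggests you noticed the tension; you should state explicitly that the algebraic relation belongs to the regime $p<\tfrac{N}{N-2}$, where your derivation is correct as written.
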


\begin{lemma}[Kim and Moon \cite{KM}]\label{L3}
There exists a constant $C > 0$ depending only on $N$ and $p$ such that
\begin{equation}\label{V10est}
    \left|V_{0,1}(r) - \frac{b_{N,p}}{r^{N-2}}\right| \le \frac{C}{r^N}.
\end{equation}
Besides,
\begin{equation}\label{U10est}
\begin{cases}
    \displaystyle \left|U_{0,1}(r) - \frac{a_{N,p}}{r^{N-2}}\right| \le \frac{C}{r^{N-2+\kappa_0}} &\text{if } p \in (\frac{N}{N-2}, \frac{N+2}{N-2}], \\
    \displaystyle \left|U_{0,1}(r) - \frac{a_{N,p} \log r}{r^{N-2}}\right| \le \frac{C}{r^{N-2}} &\text{if } p = \frac{N}{N-2}, \\
    \displaystyle \left|U_{0,1}(r) - \frac{a_{N,p}}{r^{p(N-2)-2}}\right| \le \frac{C}{r^{p(N-2)-2+\kappa_1}} &\text{if } p \in (\frac{2}{N-2}, \frac{N}{N-2}),
\end{cases}
\end{equation}
where $\kappa_0 := p(N-2)-N > 0$ and $\kappa_1$ is any number in $(0, \min\{N-p(N-2),2(p+1)\})$.
\end{lemma}

In fact, this asymptotic behavior of $(U_{0,1},V_{0,1})$ has been refined by the first and third author recently, see \cite{GHPY}.

\begin{lemma}[Frank, Kim and Pistoia \cite{FKP21}]\label{L2}
Set
   $$(\Psi_{0,1}^0,\Phi_{0,1}^0) = \left(  y \cdot \nabla U_{0,1} +\frac{NU_{0,1}}{q+1},\; y \cdot \nabla V_{0,1}+\frac{NV_{0,1}}{p+1}  \right)$$
and
   $$ (\Psi_{0,1}^l, \Phi_{0,1}^l) = (\partial_l U_{0,1}, \partial_l V_{0,1} ),\;\; \hbox{for }\ \ l=1,\cdots,N. $$
Then the space of solutions to the linear system
   \begin{equation}\label{8}
   \begin{cases}
   -\Delta \Psi =pV_{0,1}^{p-1} \Phi,\;\;\; \hbox{in } \mathbb R^N,\\
   -\Delta \Phi =qU_{0,1}^{q-1} \Psi,\;\;\; \hbox{in } \mathbb R^N,\\
   (\Psi,\Phi)\in \dot{W}^{2,\frac{p+1}{p}}(\mathbb R^N) \times \dot{W}^{2,\frac{q+1}{q}}(\mathbb R^N),
   \end{cases}
   \end{equation}
 is spanned by
   $$ \left\{  (\Psi_{0,1}^0,\Phi_{0,1}^0), (\Psi_{0,1}^1,\Phi_{0,1}^1)  ,\cdots, (\Psi_{0,1}^N,\Phi_{0,1}^N)    \right\} .$$
   \end{lemma}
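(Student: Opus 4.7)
The plan is to reduce \eqref{8} to a family of radial ODE systems via spherical harmonic decomposition, and then analyze each mode separately. Writing $\Psi(y)=\sum_{k,i}\psi_{k,i}(r)Y_{k,i}(\theta)$ and $\Phi(y)=\sum_{k,i}\phi_{k,i}(r)Y_{k,i}(\theta)$ with $\{Y_{k,i}\}$ an orthonormal basis of spherical harmonics of degree $k$ on $\S^{N-1}$, the radiality of $(U_{0,1},V_{0,1})$ decouples the modes and each pair $(\psi,\phi)=(\psi_{k,i},\phi_{k,i})$ solves the coupled ODE system
\[
\begin{cases}
-\psi''-\frac{N-1}{r}\psi'+\frac{k(k+N-2)}{r^2}\psi=pV_{0,1}^{p-1}\phi,\\
-\phi''-\frac{N-1}{r}\phi'+\frac{k(k+N-2)}{r^2}\phi=qU_{0,1}^{q-1}\psi
\end{cases}
\]
on $(0,\infty)$, with integrability inherited from the ambient Sobolev spaces. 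It then suffices to classify admissible solutions mode by mode.

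For $k=0$, the scaling invariance of \eqref{lane-embden} furnishes the explicit solution $(y\cdot\nabla U_{0,1}+\tfrac{N}{q+1}U_{0,1},\, y\cdot\nabla V_{0,1}+\tfrac{N}{p+1}V_{0,1})$. To prove it is unique up to scalars, I would perform the Emden--Fowler change of variables $r=e^t$, $(u(t),v(t))=(e^{\alpha t}U_{0,1}(e^t),e^{\beta t}V_{0,1}(e^t))$ with $\alpha=N/(q+1)$, $\beta=N/(p+1)$, which renders the underlying nonlinear system autonomous on $\R$ thanks to the critical hyperbola \eqref{critical hyperbola}; the ground state then corresponds to a homoclinic orbit of a reduced Hamiltonian flow, and the linearization along it has a unique direction decaying at both ends (the tangent to the orbit, i.e.\ the scaling mode), recovered via phase-plane analysis using the sharp exponential decay in the Emden--Fowler coordinates supplied by Lemma \ref{L1}. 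For $k=1$, the $N$ translations $(\partial_l U_{0,1},\partial_l V_{0,1})$ yield $N$ linearly independent explicit solutions, each a radial profile times the degree-one spherical harmonic $y_l/|y|$; the same Emden--Fowler argument, now with the centrifugal term $(N-1)/r^2$ added, shows that the radial profile is unique, producing exactly an $N$-dimensional solution space in this mode.

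For $k\geq 2$, the centrifugal coefficient $k(k+N-2)$ strictly exceeds its $k=1$ value $N-1$, producing a positive spectral gap. I would exploit this via a weighted energy identity: pair the first radial equation with $\phi$ and the second with $\psi$, integrate against a suitable power of $r$, and combine with the Hardy inequality and the (known) zero-eigenvalue structure of the $k=1$ operator to force $(\psi,\phi)\equiv 0$ for every admissible pair; equivalently, on the Emden--Fowler side, the shifted centrifugal term opens a strict spectral gap at the homoclinic orbit, which excludes decaying solutions.

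The main obstacle is the rigorous execution of the phase-plane analysis in the $k=0,1$ modes and the strict exclusion for $k\geq 2$. Because $p\neq q$ in general, the linearized operator is not self-adjoint, so standard Sturm--Liouville comparison and variational arguments do not apply directly; one must work with the Hamiltonian structure of the Emden--Fowler system and combine it with the sharp asymptotics of Lemma \ref{L1} to localize the analysis near both endpoints $t\to\pm\infty$ and to guarantee that the admissible solutions inherited from $\dot{W}^{2,(p+1)/p}\times\dot{W}^{2,(q+1)/q}$ correspond precisely to the exponentially decaying solutions of the transformed system.
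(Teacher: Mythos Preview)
The paper does not actually prove Lemma~\ref{L2}; it is stated in the appendix as a known non-degeneracy result for the critical Lane--Emden system, alongside the asymptotics of Lemma~\ref{L1}, and is simply quoted from the literature. So there is no ``paper's own proof'' to compare against.

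Your outline is the standard route used in the published proofs of this result (spherical-harmonic decomposition, identification of the $k=0$ and $k=1$ modes with scaling and translations, and exclusion of $k\ge 2$ via the monotonicity of the centrifugal term). The structure is correct, but be aware that the two steps you flag as ``obstacles'' are precisely where the real work lies and your sketch does not yet resolve them. For $k\ge 2$, simply pairing the equations and invoking Hardy is not enough: because the linearized operator is genuinely non-self-adjoint when $p\neq q$, the cross-pairing produces a bilinear form with no sign, and one cannot directly read off triviality from an energy identity. The published arguments instead reduce to a scalar fourth-order radial equation (or an equivalent $2\times 2$ first-order Hamiltonian system in Emden--Fowler variables) and use either oscillation theory for such equations or a careful counting of the dimensions of the stable/unstable manifolds at $t\to\pm\infty$ combined with the asymptotics of Lemma~\ref{L1}. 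Likewise, for $k=0$ and $k=1$, ``the homoclinic has a unique decaying tangent direction'' is the right heuristic, but turning it into a proof requires showing that the admissibility condition coming from $\dot W^{2,(p+1)/p}\times\dot W^{2,(q+1)/q}$ selects exactly the exponentially decaying branch at both ends, which again uses the sharp rates in Lemma~\ref{L1} (and is sensitive to the three regimes $p\lessgtr N/(N-2)$ there). Your plan is on the right track, but as written it is a roadmap rather than a proof.
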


\section{Estimate of the approximate solution}

In this section, we estimate $Y_{\bar{r},\bar{y}'',\lambda}^*(y)$, which is the unique solution of \eqref{ansatzes_2}. The basic idea is the same as \cite{GKPY}. However, since we are at a different setting, we give a complete proof for the reader's convenience. Recall that
\[
    \varphi (y) = Y_{\bar{r},\bar{y}'',\lambda}^*(y) - \sum_{j=1}^m U_{x_j, \lambda}(y).
\]
Then, we have the following expansion of $\varphi$:

\begin{lemma}
\label{nnl2-19-1}
Assume that $N \ge 5$. For $m \in \N$ large enough, there is a positive constant $B_0>0$, such that
\begin{equation}\label{nn1-19-1}
    \varphi(y)=\sum\limits_{j=2}^m\frac{B_{0}\lambda^{\frac{N}{q+1}}}{(\lambda|x_j-x_1|)^{N-2}}w(\lambda(y-x_1))+O\left(\frac{\lambda^{\frac{N}{q+1}}}{\lambda^{2+\varepsilon}}\right), \quad y \in \Omega_1
\end{equation}
uniformly in $\{(\bar{r},\bar{y}'',t):t\in [L_0,L_1],(\bar{r},\bar{y}'')\in B_{\theta}\big( (r_0,y''_0)\}$, where $\lambda=tm^{\frac{N-2}{N-4}}$, $\varepsilon>0$ is a sufficiently small number, $\Omega_1$ is defined as \eqref{Omega} and $w$ is the unique solution of the following equation:
\begin{equation}\label{def-w}
    \begin{cases}
        -\Delta w = V_{0,1}^{p-1}, \ \ \text{in} \ \ \R^N, \\
        |w| \to 0, \ \ \text{as} \ \ |x| \to +\infty.
    \end{cases}
\end{equation}
Moreover, we have the following estimate
\begin{equation}\label{varphi-h}
    \frac{\partial}{\partial \Box_h}\varphi(y)=O\left(\frac{\lambda^{\frac{N}{q+1}+n_h}}{\lambda^{2+\varepsilon}}\right),\quad y\in\Omega_1.
\end{equation}
uniformly in the parameter space above for $h=1,2,\cdots,N$ and $\Box_h = \lambda$ if $h=1$, $\Box_h = \bar{r}$ if $h=2$, $\Box_h=\bar{y}_h''$ if $h=3,\cdots,N$.
\end{lemma}

\begin{proof}
From the definition of $\varphi(y)$, we have the following representation of $\varphi(y)$:
\begin{equation}\label{nn2-19-1}
    \varphi(y) = \int_{\R^N}\frac{C(N)}{|y-z|^{N-2}} \left( \bigg(\sum_{j=1}^m V_{x_j, \lambda}(z)\bigg)^p- \sum_{j=1}^m V_{x_j,\lambda}^p(z) \right)\ \dz > 0, \quad y \in \R^N.
\end{equation}
Next, we estimate the right hand side of \eqref{nn2-19-1} when $y \in \Omega_1$. Since $\tau =\frac{N-4}{N-2} \in (0,1)$, we can choose $\mu \in (\tau,1)$ , which is slightly larger than $\tau$. Then, we can split the integral on the right hand side of \ref{nn2-19-1} into three parts.

\textbf{Estimate the integral in $B_{_{\lambda^{-\mu}}}(x_1)$.} Since we choose $\mu$ slightly larger than $\tau$, we have
\begin{equation}\label{Vj2}
\begin{aligned}
    \sum_{j=2}^m V_{x_j,\lambda}(y) &\le  \sum_{j=2}^m \frac{C\lambda^{\frac{N}{p+1}}}{(\lambda|y-x_j|)^{N-2}}
    \le \sum_{j=2}^m \frac{C\lambda^{\frac{N}{p+1}}}{(\lambda|x_1-x_j|)^{N-2}} \leq \frac{C\lambda^{\frac{N}{p+1}}m^{N-2}}{\lambda^{N-2}}\leq CV_{x_1,\lambda}(y),
\end{aligned}
\end{equation}
for $y \in B_{_{\lambda^{-\mu}}}(x_1)$. Therefore, for small $\delta>0$ such that $0 < \delta < p - \frac{N}{N-2}$, we have
\begin{align}
        &\quad \int_{B_{\lambda^{-\mu}}(x_1)} \frac{C(N)}{|y-z|^{N-2}} \left( \bigg(\sum_{j=1}^m V_{x_j,\lambda}(z)\bigg)^p- \sum_{j=1}^m V_{x_j,\lambda}^p(z) \right) \ \dz \nonumber \\
        &= p\int_{B_{\lambda^{-\mu}}(x_1)} \frac{C(N)}{|y-z|^{N-2}} \bigg(V_{x_1,\lambda}^{p-1}(z)\sum_{j=2}^m V_{x_j,\lambda}(z)\bigg)\ \dz - \int_{B_{\lambda^{-\mu}}(x_1)}\frac{C(N)}{|y-z|^{N-2}} \bigg(\sum_{j=2}^m V_{x_j,\lambda}^p(z)\bigg)\ \dz \label{1-29-7} \\
        &\quad+ O\Bigg(\int_{B_{\lambda^{-\mu}}(x_1)}\frac{1}{|y-z|^{N-2}} \Bigg( V_{x_1,\lambda}^{p-1-\delta}(z)\bigg(\sum_{j=2}^m V_{x_j,\lambda}(z)\bigg)^{1+\delta}\Bigg)\ \dz\Bigg). \nonumber
\end{align}
By Lemma \ref{L3}, we have for some small $\sigma>0$ that
\begin{align}
    &\quad p\int_{B_{\lambda^{-\mu}}(x_1)} \frac{C(N)}{|y-z|^{N-2}} \bigg(V_{x_1,\lambda}^{p-1}(z)\sum_{j=2}^m V_{x_j,\lambda}(z)\bigg)\ \dz \nonumber \\
    &= p\left( \lambda^{\frac{N}{p+1}} \sum_{j=2}^m \frac{b_{N,p}}{(\lambda|x_j-x_1|)^{N-2}} + O\bigg(\frac{m^{N-2}}{\lambda^{\frac{N}{q+1}+\sigma}}\bigg)\right) \int_{B_{\lambda^{-\mu}}(x_1)}\frac{C(N)}{|y-z|^{N-2}} V_{x_1,\lambda}^{p-1}(z)\ \dz  \label{2-29-7}.
\end{align}
In addition, we have
\begin{equation}\label{3-29-7}
\begin{aligned}
    & \quad\int_{B_{\lambda^{-\mu}}(x_1)}\frac{C(N)}{|y-z|^{N-2}} V_{x_1,\lambda}^{p-1}(z)\ \dz = \frac1{\lambda^{2- \frac{(p-1)N}{p+1} }} \int_{B_{\lambda^{1-\mu}}(0)}\frac{C(N)}{ |z-\lambda(y-x_1)|^{N-2}} V_{0,1}^{p-1}(z)\ \dz\\
    &= \frac{w(\lambda(y-x_1))}{\lambda^{2- \frac{(p-1)N}{p+1} }} - \frac1{\lambda^{2- \frac{(p-1)N}{p+1} }}  \int_{\R^N\setminus B_{\lambda^{1-\mu}}(0)}\frac{C(N)}{ |z-\lambda(y-x_1)|^{N-2}} V_{0,1}^{p-1}(z)\ \dz.
\end{aligned}
\end{equation}
If $\lambda|y-x_1|\le \frac{1}{2}\lambda^{1-\mu}$, then
\begin{equation}\label{5-29-7}
    \int_{\R^N\setminus B_{\lambda^{1-\mu}}(0)}\frac{C(N)}{ |z-\lambda(y-x_1)|^{N-2}} V_{0,1}^{p-1}(z)\ \dz
    \le C\int_{\R^N\setminus B_{\lambda^{1-\mu}}(0)}\frac 1{|z|^{p(N-2)}} \le \frac{C}{\lambda^{\sigma}}.
\end{equation}
If $\lambda|y-x_1|> \frac{1}{2}\lambda^{1-\mu}$, then we can use Lemma \ref{B3} to get
\begin{equation}\label{6-29-7}
    \int_{\R^N\setminus B_{\lambda^{1-\mu}}(0)}\frac{1}{ |z-\lambda(y-x_1)|^{N-2}} V_{0,1}^{p-1}(z)\ \dz \le \frac{C}{(1+\lambda |y-x_1|)^{(p-1)(N-2)-2}} \le \frac{C}{\lambda^{\sigma}}.
\end{equation}
Combine \eqref{2-29-7}--\eqref{6-29-7}, we obtain
\begin{multline}\label{7-29-7}
    p\int_{B_{\lambda^{-\mu}}(x_1)} \frac{C(N)}{|y-z|^{N-2}} \bigg(V_{x_1,\lambda}^{p-1}(z)\sum_{j=2}^m V_{x_j,\lambda}(z)\bigg)\, \dz = \sum\limits_{j=2}^m\frac{B_{11}\lambda^{\frac{N}{q+1}}}{(\lambda|x_j-x_1|)^{N-2}}w(\lambda(y-x_1))+O\bigg(\frac{\lambda^{\frac{N}{q+1}}}{\lambda^{2+\varepsilon}}\bigg).
\end{multline}
Recall that $0 < \delta < p - \frac{N}{N-2}$, we have
\begin{align}
        &\quad \int_{B_{\lambda^{-\mu}}(x_1)}\frac{1}{|y-z|^{N-2}} \Bigg( V_{x_1,\lambda}^{p-1-\delta}(z)\bigg(\sum_{j=2}^m V_{x_j,\lambda}(z)\bigg)^{1+\delta}\Bigg)\  \dz  \nonumber\\
        &\le C\left( \sum_{j=2}^m \frac{\lambda^{\frac{N}{p+1}}} {(\lambda |x_j-x_1|)^{N-2}}\right)^{1+\delta} \int_{B_{\lambda^{-\mu}}(x_1)}\frac{1}{|y-z|^{N-2}} V_{x_1,\lambda}^{p-1-\delta}(z)\  \dz \label{8-29-7}\\
       &\le C\left( \sum_{j=2}^m \frac1{(\lambda |x_j-x_1|)^{N-2}}\right)^{1+\delta} \int_{\R^N}\frac{\lambda^{\frac{pN}{p+1}}}{|y-z|^{N-2}} V_{0,1}^{p-1-\delta}(\lambda(z-x_1))\ \dz \nonumber \\
        &\le C \lambda^{\frac{pN}{p+1}-2} \left( \sum_{j=2}^m \frac{1} {(\lambda |x_j-x_1|)^{N-2}}\right)^{1+\delta} \le \frac{C\lambda^{\frac{N}{q+1}}}{\lambda^{2+\varepsilon}} ,\nonumber
\end{align}
and
\begin{equation}\label{9-29-7}
    \int_{B_{\lambda^{-\mu}}(x_1)}\frac{C(N)}{|y-z|^{N-2}} \bigg(\sum_{j=2}^m V_{x_j,\lambda}^p(z)\bigg)\ \dz \le \int_{B_{\lambda^{-\mu}}(x_1)}\frac{C}{|y-z|^{N-2}} \bigg(V_{x_1,\lambda}^{p-1-\delta}\sum_{j=2}^m V_{x_j,\lambda}^{1+\delta}(z)\bigg)\ \dz  \le \frac{C\lambda^{\frac{N}{q+1}}}{\lambda^{2+\varepsilon}}.
\end{equation}
Thus, combining \eqref{1-29-7} and \eqref{7-29-7}-\eqref{9-29-7}, we obtain
\begin{equation}\label{2-30-7}
    \begin{split}
        & \quad \int_{B_{\lambda^{-\mu}}(x_1)} \frac{C(N)}{|y-z|^{N-2}} \left(\bigg(\sum_{j=1}^m V_{x_j, \lambda}(z)\bigg)^p- \sum_{j=1}^m V_{x_j,\lambda}^p(z) \right)\ \dz \\
        & = \sum\limits_{j=2}^m\frac{B_{11}\lambda^{\frac{N}{q+1}}}{(\lambda|x_j-x_1|)^{N-2}}w(\lambda(y-x_1))+O\bigg(\frac{\lambda^{\frac{N}{q+1}}}{\lambda^{2+\varepsilon}}\bigg).
    \end{split}
\end{equation}

\textbf{Estimate the integral in $\Omega_1 \backslash B_{\lambda^{-\mu}}(x_1)$.} Using Lemma \ref{A1}, \ref{B1} and \ref{B2}, we have for any $y\in \Omega_1$ that
\begin{align}
        & \quad \int_{\Omega_1\setminus B_{\lambda^{-\mu}}(x_1)}\frac{1}{|y-z|^{N-2}} \bigg(\sum_{j=2}^m V_{x_j,\lambda}(z)\bigg)^p\ \dz  \nonumber \\
        & \le \frac{C}{\lambda^{\frac{pN}{q+1}}}\int_{\Omega_1\setminus B_{\lambda^{-\mu}}(x_1)}\frac{1}{|y-z|^{N-2}} \bigg( \sum_{j=2}^m \frac{1}{|z-x_1|^{N-3-\theta}|x_1 - x_j|^{1+\theta}}\bigg)^p\ \dz \label{1-29-9} \\
        & =  \frac{C}{\lambda^{\frac{pN}{q+1}}}\int_{\Omega_1\setminus B_{\lambda^{-\mu}}(x_1)}\frac{1}{|y-z|^{N-2}}  \frac{m^{(1+\theta)p}}{|z-x_1|^{(N-3-\theta)p}}\ \dz \nonumber \\
        &\le \frac{Cm^{(1+\theta)p}}{\lambda^{\frac{pN}{q+1}}} \lambda^{\mu((N-3-\theta)p-2)} \le \frac{C\lambda^{\frac{N}{q+1}}}{\lambda^{2+\varepsilon}},\nonumber
\end{align}
where we choose $\theta > 0$ small enough such that $p(N-3-\theta) > 2$, and the last inequality follows from $p(N-2) > N$. Hence, using \eqref{1-29-9} and the same method as \eqref{5-29-7}--\eqref{6-29-7}, we obtain
\begin{align}
        0 &< \int_{\Omega_1\setminus B_{\lambda^{-\mu}}(x_1)} \frac{C(N)}{|y-z|^{N-2}} \left(\bigg(\sum_{j=1}^m V_{x_j,\lambda}(z)\bigg)^p- \sum_{j=1}^m V_{x_j,\lambda}^p(z) \right)\ \dz \nonumber  \\
        &\le \int_{\Omega_1\setminus B_{\lambda^{-\mu}}(x_1)} \frac{C(N)}{|y-z|^{N-2}} \left( \bigg(\sum_{j=1}^m V_{x_j,\lambda}(z)\bigg)^p- V_{x_1,\lambda}^p(z) \right)\ \dz \label{3-30-7}\\
        &\le C\int_{\Omega_1\setminus B_{\lambda^{-\mu}}(x_1)}\frac{1}{|y-z|^{N-2}} \left( V_{x_1,\lambda}^{p-1}(z)\sum_{j=2}^m V_{x_j,\lambda}(z) + \bigg(\sum_{j=2}^m V_{x_j,\lambda}(z)\bigg)^p\right)\ \dz \le \frac{C\lambda^{\frac{N}{q+1}}}{\lambda^{2+\varepsilon}}. \nonumber
\end{align}
Thus, it follows that
\begin{equation}\label{4-30-7}
    \int_{\Omega_1 \backslash B_{\lambda^{-\mu}}(x_1)} \frac{C(N)}{|y-z|^{N-2}} \left( \bigg(\sum_{j=1}^m V_{x_j,\lambda}(z)\bigg)^p- \sum_{j=1}^m V_{x_j,\lambda}^p(z) \right)\ \dz =  O\bigg(\frac{\lambda^{\frac{N}{q+1}}}{\lambda^{2+\varepsilon}}\bigg).
\end{equation}

\textbf{Estimate the integral in $\R^N \backslash \Omega_1 = \cup_{i=2}^m \Omega_i$}. For $ z \in \Omega_i$, we can use an argument similar to \eqref{Vj2} to obtain
\[
     \sum_{j\neq i} V_{x_j,\lambda}(z) \leq CV_{x_i,\lambda}(z).
\]
Therefore, we have
\begin{align*}
    &\quad \sum_{i=2}^m \int_{\Omega_i} \frac{C}{|y-z|^{N-2}} \left( V_{x_i,\lambda}^{p-1}(z) \sum_{j \ne i} V_{x_j,\lambda}(z) + \bigg(\sum_{j \ne i} V_{x_j,\lambda}(z)\bigg)^p\right)\ \dz \\
    & \displaystyle \le \frac{C}{\lambda^{\frac{pN}{q+1}}} \sum_{i=2}^m \int_{\Omega_i} \frac{1}{|y-z|^{N-2}} \left(\frac{1}{|z-x_i|^{(p-1)(N-2)}} \sum_{j \ne i} \frac{1}{|z-x_j|^{N-2}} + \bigg(\sum_{j \ne i} \frac{1}{|z-x_j|^{N-2}}\bigg)^p\right) \ \dz \\
    &= \frac{Cm^{p(N-2)-2}}{\lambda^{\frac{pN}{q+1}}} \sum_{i=2}^m \int_{\Omega_i} \frac{1}{|my-z|^{N-2}}\frac{1}{|z-mx_i|^{(p-1)(N-2)}} \sum_{j \ne i} \frac{1}{|z-mx_j|^{N-2}} \ \dz \\
    &\quad + \frac{Cm^{p(N-2)-2}}{\lambda^{\frac{pN}{q+1}}} \sum_{i=2}^m \int_{\Omega_i} \frac{1}{|my-z|^{N-2}} \bigg(\sum_{j \ne i} \frac{1}{|z-mx_j|^{N-2}}\bigg)^p \  \dz \\
    & : = K_1 + K_2.
    &
\end{align*}
Note that $y \in \Omega_1$, we can choose $\theta>0$ small enough such that
\begin{equation}\label{K1}
    \begin{split}
        K_1 & \leq \frac{Cm^{p(N-2)-2}}{\lambda^{\frac{pN}{q+1}}} \sum_{i=2}^m \int_{\Omega_i} \frac{1}{|my-z|^{N-2}}\frac{1}{|z-mx_i|^{(p-1)(N-2)+ N - 3 - \theta}} \sum_{j \ne i} \frac{1}{|mx_1-mx_j|^{1+\theta}} \ \dz \\
        & \leq \frac{Cm^{p(N-2)-2}}{\lambda^{\frac{pN}{q+1}}} \sum_{i=2}^m \frac{1}{|my - mx_i|^{(p-1)(N-2) + N-5-\theta}} = \dfrac{Cm^{p(N-2)-2}}{\lambda^{\frac{pN}{q+1}}} = O\bigg(\frac{\lambda^{\frac{N}{q+1}}}{\lambda^{2+\varepsilon}}\bigg),
    \end{split}
\end{equation}
where we have used Lemma \ref{A1}, \ref{B1}, $(p-1)(N-2) + N - 5 > 1$ and $\frac{(p+1)N}{q+1} - \tau(p(N-2)-2) = 2p-\frac{2N}{N-2} >0$. Similarly, we have
\begin{equation}\label{K2}
    \begin{split}
        K_2 & \leq \frac{Cm^{p(N-2)-2}}{\lambda^{\frac{pN}{q+1}}} \sum_{i=2}^m \int_{\Omega_i} \frac{1}{|my-z|^{N-2}}\frac{1}{|z-mx_i|^{N-2+ (N - 3 - \theta)p}} \sum_{j \ne i} \bigg( \frac{1}{|mx_1-mx_j|^{1+\theta}}\bigg)^p \ \dz \\
        & \leq \frac{Cm^{p(N-2)-2}}{\lambda^{\frac{pN}{q+1}}} \sum_{i=2}^m \frac{1}{|my - mx_i|^{N-4+(N-3-\theta)p}} = \dfrac{Cm^{p(N-2)-2}}{\lambda^{\frac{pN}{q+1}}} = O\bigg(\frac{\lambda^{\frac{N}{q+1}}}{\lambda^{2+\varepsilon}}\bigg).
    \end{split}
\end{equation}
Combine \eqref{K1} and \eqref{K2}, we obtain
\begin{equation}\label{5-30-7}
    \int_{\R^N \backslash \Omega_1} \frac{C(N)}{|y-z|^{N-2}} \left[\bigg(\sum_{j=1}^m V_{x_j,\lambda}(z)\bigg)^p- \sum_{j=1}^m V_{x_j,\lambda}^p(z) \right]\ \dz = O\bigg(\frac{\lambda^{\frac{N}{q+1}}}{\lambda^{2+\varepsilon}}\bigg).
\end{equation}
Then, \eqref{nn1-19-1} follows from \eqref{2-30-7}, \eqref{4-30-7} and \eqref{5-30-7}. Applying $\frac{\pa}{\pa \Box_h}$ on both sides of \eqref{nn2-19-1} and using the same method, we can obtain \eqref{varphi-h}.
\end{proof}

Lemma \ref{nnl2-19-1} is useful when the integral involving $Y_{\bar{r},\bar{y}'',\lambda}$ produce the main order term, see the estimate of $I_2$ in Lemma \ref{expansion_lambda}. In other cases, we may only need the upper bound of $Y_{\bar{r},\bar{y}'',\lambda}$, which is given by the next lemma.

\begin{lemma}
\label{lemma:U}
Assume that $N \ge 5$ and $p \in (\frac{N}{N-2},\frac{N+2}{N-2}]$. Then there is a small constant $\theta_0$, such that for any $\theta \in (0, \theta_0)$,  it holds that
\[
    Y_{\bar{r},\bar{y}'',\lambda}^*(y) \le C \sum_{i=1}^m \frac{\lambda^{\frac{N}{q+1}}}{(1+\lambda|y-x_i|)^{N-2}} + \frac{C}{\lambda^{\frac{pN}{q+1}}} \sum_{i=1}^m \frac{m^{p(N-2)-2}}{(1+m|y-x_i|)^{\min\{N-2,p(N-3-\theta)-2\}}}
\]
for $y \in \R^N$.
\end{lemma}

\begin{proof}
 Since $Y_{\bar{r},\bar{y}'',\lambda}^*(y)$ is the unique solution of \eqref{ansatzes_2}, we have
\begin{align}\label{U1}
    Y_{\bar{r},\bar{y}'',\lambda}^*(y) & = \int_{\R^N} \frac{C(N)}{|y-z|^{N-2}} \left( \sum_{j=1}^m V_{x_j,\lambda}(z) \right)^p  \ \ dz= \sum_{i=1}^m \int_{\Omega_i} \frac{C(N)}{|y-z|^{N-2}} \left( \sum_{j=1}^m V_{x_j,\lambda} (z) \right)^p  \ \dz \\
    & \leq C \sum_{i=1}^m \int_{\Omega_i} \frac{1}{|y-z|^{N-2}} \left( V_{x_i,\lambda}^p(z)+ \bigg(\sum_{j \neq i} V_{x_j,\lambda} (z)\bigg)^p\right) \ \dz.
\end{align}
For $i=1, \cdots, m$, we have
\begin{equation}\label{U2}
    \int_{\Omega_i} \frac{1}{|y-z|^{N-2}} V_{x_i,\lambda}^p(z) dz \le \frac{C\lambda^{\frac{pN}{p+1}-2}}{(1+\lambda|y-x_i|)^{N-2}} = \frac{C\lambda^{\frac{N}{q+1}}}{(1+\lambda|y-x_i|)^{N-2}} .
\end{equation}
For $j \neq i$, we have $|z - m x_j| \geq 1 + |z-mx_i|$ for $z \in \Omega_i$. Therefore, we have
\begin{align}\label{U3}
    & \quad \int_{\Omega_i} \frac{C(N)}{|y-z|^{N-2}} \left( \sum_{j \neq i} V_{x_j,\lambda}(z) \right)^p \ \dz  \le \frac{C m^{p(N-2)-2}}{\lambda^{\frac{pN}{q+1}}} \int_{\Omega_i} \frac{1}{|my-z|^{N-2}} \left( \sum_{j \neq i} \frac{1}{|z-mx_j|^{N-2}} \right)^p \ \dz \\
    &\le \frac{C m^{p(N-2)-2}}{\lambda^{\frac{pN}{q+1}}} \int_{\Omega_i} \frac{1}{|my-z|^{N-2}} \frac{1}{(1+|z-mx_i|)^{p(N-3-\theta)}} \left( \sum_{j \neq i} \frac{1}{|mx_i-mx_j|^{1+\theta}} \right)^p \ \dz \\
    &\le \frac{C m^{p(N-2)-2}}{\lambda^{\frac{pN}{q+1}}} \frac{1}{(1+m|y-x_i|)^{\min\{ N-2, p(N-3-\theta)-2\}}}.
\end{align}
Then the desired result follows from \eqref{U1}-\eqref{U3}.
\end{proof}

Lemma \ref{lemma:U} and Lemma \ref{nnl2-19-1} provide a good estimate of $Y_{\bar{r},\bar{y}'',\lambda}^*$. However, due to their complex forms, we sometimes use a simpler but rougher estimate , which is also sufficient in some cases.
\begin{lemma}
\label{l1-23-4}Suppose that $N \ge 5$, $p \in (\frac{N}{N-2},\frac{N+2}{N-2}]$ and \eqref{critical hyperbola} hold. If $N = 5$, we also assume that $p \in (2,\frac{7}{3}]$. Let $\tau = \frac{N-4}{N-2} \in (0,1)$. Then there is a constant $\theta_0>$, such that for any $\theta \in (0, \theta_0)$, it holds that
\begin{equation}\label{10-23-4}
     Y_{\bar{r},\bar{y}'',\lambda}^{*}(y)\le C\sum_{j=1}^m \frac{ \lambda^{\frac{N}{q+1}}  }{ (1+\lambda |y-x_j|)^{  \frac{N}{q+1}+\tau+\theta } }, \ \ y \in \R^N.
\end{equation}
\end{lemma}
\begin{proof}
    Without loss of generality, we may assume $y \in \Omega_1$. Recall that $S$ is defined in \eqref{eq:S}. If $y \in S$, we can use Lemma \ref{nnl2-19-1}, \ref{A1}, the definition of $w$ and $$1+\lambda|y-x_1| \leq 1+\lambda/(mr_0) \leq C\lambda^{1-\tau} = C\lambda^{\frac{2}{N-2}}, \ \  \text{when} \ \ y \in S, \ \ j \neq 1,$$ to obtain
    \[
        \begin{split}
            \sum_{j=2}^m U_{x_j,\lambda}(y) + \varphi(y) & \leq \sum_{j=2}^m \frac{C\lambda^{\frac{N}{q+1}}}{(1+\lambda|y-x_j|)^{N-2}} +  \frac{C\lambda^{\frac{N}{q+1}}}{\lambda^2(1+\lambda|y-x_1|)^{(N-2)(p-1)-2}} + \frac{C\lambda^{\frac{N}{q+1}}}{\lambda^{2+\varepsilon}} \\
            & \leq U_{x_1,\lambda}(y)\bigg( \sum_{j=2}^m \frac{\lambda^2}{|\lambda x_1 - \lambda x_j |^{N-2}} + \lambda^{\frac{2}{N-2}(N - (N-2)p)-2} + \frac{1}{\lambda^{\varepsilon}} \bigg) \\
            & \leq U_{x_1,\lambda}(y) \leq \frac{ \lambda^{\frac{N}{q+1}}  }{ (1+\lambda |y-x_1|)^{  \frac{N}{q+1}+\tau+\theta } }.
        \end{split}
    \]
    If $y \in \Omega_1 \backslash S$ and $p(N-3) > N$, we have
    \[
        \begin{split}
            \frac{1}{\lambda^{\frac{pN}{q+1}}} \sum_{i=1}^m \frac{m^{p(N-2)-2}}{(1+m|y-x_i|)^{N-2}} & \leq \bigg( \frac{m}{\lambda} \bigg)^{p(N-2)-2} \bigg( \frac{\lambda}{m} \bigg)^{N-2} \sum_{i=1}^m \frac{\lambda^{\frac{N}{q+1}}}{|\lambda y - \lambda x_i|^{N-2}} \\
            & \leq C\bigg(\frac{m}{\lambda} \bigg)^{p(N-2)-N} \sum_{i=1}^m \frac{\lambda^{\frac{N}{q+1}}}{(1+\lambda| y - x_i|)^{N-2}}  \\
            & \leq  C\sum_{i=1}^m \frac{\lambda^{\frac{N}{q+1}}}{(1+\lambda| y - x_i|)^{\frac{N}{q+1}+\tau + \theta}}.
        \end{split}
    \]
    If $p(N-3)< N$, using $(1+\lambda|y-x_i|) \approx \lambda|y-x_i|$ and $(1+m|y-x_i|) \approx m|y-x_i|$, we only need to prove
    \[
        \frac{1}{\lambda^{\frac{pN}{q+1}}} \frac{m^{p(N-2)-2}}{(m|y-x_i|)^{p(N-3-\theta)-2}} \leq C  \frac{\lambda^{\frac{N}{q+1}}}{(\lambda| y - x_i|)^{\frac{N}{q+1}+\tau + \theta}}, \ \ i=1,2,\cdots,m,
    \]
    which is equivalent to
    \[
        \bigg( \frac{m}{\lambda} \bigg)^{p(1+\theta)} \leq C (\lambda|y-x_i|)^{p(N-3-\theta) - 2 - \frac{N}{q+1} - \theta - \tau}.
    \]
    The above inequality holds since $\lambda|y-x_i| \geq \lambda/m \to +\infty$ and
    \[
        p(N-3) - 2 - \frac{N}{q+1} - \theta - \tau > 0,
    \]
    when $\frac{N}{N-2}< p< \frac{N}{N-3}$ and $\theta >0$ can be chosen sufficiently small. This completes the proof.
\end{proof}

\section{The energy expansion}
In this section, we will give some estimates of the energy expansion for the approximate solutions. Recall that
\[
    I(u,v) = \int_{\R^N}(\nabla u \nabla v + V(|y'|,y'')uv ) \ \dy - \dfrac{1}{p+1} \int_{\R^N} |v|^{p+1} \ \dy - \dfrac{1}{q+1} \int_{\R^N} |u|^{q+1} \ \dy.
\]

\begin{lemma}\label{expansion_lambda}
    If $N\geq 5$, then there is a small constant $\varepsilon>0$ such that
    $$
        \frac{\partial I(Y_{\bar{r}, \bar{y}'', \lambda},Z_{\bar{r}, \bar{y}'', \lambda})}{\partial \lambda}=m\left( -\frac{B_1}{\lambda^3}V(\bar{r},\bar{y}'')+\sum\limits_{j=2}^m\frac{B_2}{\lambda^{N-1}|x_1-x_j|^{N-2}}+O\left(\frac{1}{\lambda^{3+\varepsilon}}\right)\right),
    $$
    for some positive constants $B_1$ and $B_2$.
\end{lemma}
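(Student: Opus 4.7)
The plan is to compute $\partial_\lambda I$ by writing $I=J+P$ with $J(u,v)=\int_{\R^N}\nabla u\nabla v-\tfrac{|v|^{p+1}}{p+1}-\tfrac{|u|^{q+1}}{q+1}$ the scale-invariant part and $P(u,v)=\int_{\R^N}V(|y'|,y'')uv$ the potential, and then exploiting the rotational/mirror symmetry that permutes the $m$ bubbles and leaves $V$ invariant so that both derivatives reduce to $m$ times the contribution attached to the first bubble. Cutoff corrections coming from $\xi\not\in\{0,1\}$ are supported on the annulus $\{\delta\leq|(r,y'')-(r_0,y_0'')|\leq 2\delta\}$, where $\lambda^{-1}\leq C(1+\lambda|y-x_1|)^{-1}$, and are absorbed into the $O(m\lambda^{-3-\varepsilon})$ error via Lemmas \ref{B1}--\ref{B2}.

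For the potential piece, the change of variables $z=\lambda(y-x_1)$ together with the critical-hyperbola identity $\tfrac{N}{p+1}+\tfrac{N}{q+1}=N-2$ and a Taylor expansion of $V$ at $(\bar r,\bar y'')$ give
\[
\int_{\R^N} V(y)\,U_{x_1,\lambda}(y)V_{x_1,\lambda}(y)\,\dy=\frac{V(\bar r,\bar y'')}{\lambda^{2}}\int_{\R^N}U_{0,1}V_{0,1}+O(\lambda^{-4}),
\]
since the linear Taylor contribution $\nabla V(\bar r,\bar y'')\cdot\int z\,U_{0,1}(z)V_{0,1}(z)\,\dz$ vanishes by the radial symmetry of $U_{0,1}V_{0,1}$. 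Differentiating in $\lambda$, summing over bubbles, and controlling the off-diagonal cross products $\int V U_{x_i,\lambda}V_{x_j,\lambda}$ with $i\neq j$ by Lemma \ref{B2} yields the main term $-\tfrac{B_1 m}{\lambda^{3}}V(\bar r,\bar y'')$ with $B_1=2\int U_{0,1}V_{0,1}>0$.

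For the kinetic piece, the fact that each pair $(U_{x_j,\lambda},V_{x_j,\lambda})$ is a critical point of $J$ makes $J(U_{x_j,\lambda},V_{x_j,\lambda})$ independent of $\lambda$, so that $\partial_\lambda J(Y^*,Z^*)$ comes entirely from cross terms. Using $\int\nabla U_{x_i,\lambda}\nabla V_{x_j,\lambda}=\int V_{x_i,\lambda}^{p}V_{x_j,\lambda}$ and expanding $(Z^*)^{p+1}$, $(Y^*)^{q+1}$ around the dominant bubble in each basin, the first-order cross contributions cancel and
\[
J(Y^*,Z^*)=\tfrac{2m}{N}\int_{\R^N}V_{0,1}^{p+1}-\sum_{i\neq j}\int_{\R^N}V_{x_i,\lambda}^{p}V_{x_j,\lambda}\,\dy+R,
\]
where $R$ collects the quadratic-in-crossings remainder. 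The sharp tail $V_{0,1}(r)=b\,r^{-(N-2)}+o(r^{-(N-2)})$ from Lemma \ref{L1} gives
\[
\int_{\R^N}V_{x_i,\lambda}^{p}V_{x_j,\lambda}\,\dy=\frac{b\int_{\R^N}V_{0,1}^{p}}{\lambda^{N-2}|x_i-x_j|^{N-2}}+o\bigl(\lambda^{-(N-2)}|x_i-x_j|^{-(N-2)}\bigr),
\]
so differentiating in $\lambda$ produces the interaction $+\tfrac{B_2\,m}{\lambda^{N-1}}\sum_{j\geq 2}|x_1-x_j|^{-(N-2)}$ with $B_2=(N-2)b\int V_{0,1}^{p}>0$.

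The main obstacle is the sharp remainder $O(\lambda^{-3-\varepsilon})$. At the critical scale $\lambda\sim m^{(N-2)/(N-4)}$ one has $m^{N-2}/\lambda^{N-1}\sim 1/\lambda^{3}$, so both main terms are of the same order $\lambda^{-3}$ and every correction must beat this strictly. Three remainders have to be tracked simultaneously: (i) the higher-order Taylor terms of $V$, which gain a factor $|y-x_1|\lesssim 1/\lambda$ and so contribute at order $\lambda^{-4}$; (ii) the quadratic multinomial remainders from $(\sum V_{x_j,\lambda})^{p+1}$, bounded by $\sum_{j\neq 1}(\lambda|x_1-x_j|)^{-(N-2)(p-1)}$ via the scaling computation and Lemma \ref{B2}, which are $o(\lambda^{-3-\varepsilon})$ because $(N-2)(p-1)>1$ under the assumption $p>(N+1)/(N-2)$; and (iii) the cutoff contributions, of pointwise size $\lambda^{-(N-2)}$ by the decay of $U_{x_1,\lambda}$ on the annulus and hence $O(\lambda^{-3-\varepsilon})$ for $N\geq 5$. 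Assembling the three gives the claimed expansion with positive constants $B_1,B_2$.
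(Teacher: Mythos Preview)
Your proposal is correct and follows essentially the same route as the paper: pass from $(Y,Z)$ to the uncut approximations $(Y^*,Z^*)$ by bounding the cutoff errors on the annulus $\{\delta\le|(r,y'')-(r_0,y_0'')|\le 2\delta\}$, extract the potential main term by Taylor expansion at $(\bar r,\bar y'')$, and obtain the interaction term by expanding $(\sum_j V_{x_j,\lambda})^{p+1}$ and using the sharp tail $V_{0,1}(r)\sim b\,r^{-(N-2)}$ from Lemma~\ref{L1}; your scale-invariance remark for $J$ is just a clean way of saying that the diagonal pieces of the paper's $I_2+I_3$ are $\lambda$-independent. One minor slip: under the paper's hypothesis $V\in C^1$ (not $C^2$) the Taylor remainder in $\int V\,U_{x_1,\lambda}V_{x_1,\lambda}$ is only $O(\lambda^{-3})$ per bubble (with a $\log\lambda$ when $N=5$), not $O(\lambda^{-4})$, but this is harmless since after $\partial_\lambda$ it still lands in $O(\lambda^{-3-\varepsilon})$.
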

\begin{proof}
    First, we split $\frac{\partial I(Y_{\bar{r}, \bar{y}'', \lambda},Z_{\bar{r}, \bar{y}'', \lambda})}{\partial \lambda}$ as follows:
     \begin{equation*}
         \begin{aligned}
             &\quad \frac{\partial I(Y_{\bar{r}, \bar{y}'', \lambda},Z_{\bar{r}, \bar{y}'', \lambda})}{\partial \lambda}\\
             &=\left\langle -\Delta Y_{\bar{r}, \bar{y}'', \lambda}+V(y)Y_{\bar{r}, \bar{y}'', \lambda}-Z_{\bar{r}, \bar{y}'', \lambda}^p,\frac{\partial Z_{\bar{r}, \bar{y}'', \lambda}}{\partial \lambda}\right\rangle
             +\left\langle -\Delta Z_{\bar{r}, \bar{y}'', \lambda}+V(y)Z_{\bar{r}, \bar{y}'', \lambda}-Y_{\bar{r}, \bar{y}'', \lambda}^q,\frac{\partial Y_{\bar{r}, \bar{y}'', \lambda}}{\partial \lambda}\right\rangle\\
             &:=\frac{\partial I(Y^*_{\bar{r}, \bar{y}'', \lambda},Z^*_{\bar{r}, \bar{y}'', \lambda})}{\partial \lambda}+E_1+E_2-E_3,
         \end{aligned}
     \end{equation*}
     where
     $$E_1=\int_{\R^N}(\xi^2-1)V(y)\frac{\partial}{\partial\lambda}(Y^*_{\bar{r}, \bar{y}'',\lambda}\cdot Z^*_{\bar{r}, \bar{y}'', \lambda}),$$
     \begin{equation*}
     \begin{aligned}
         E_2=\int_{\R^N}\left[(1-\xi^{p+1})(Z^*_{\bar{r}, \bar{y}'', \lambda})^p +(1-\xi^2)\Delta Y^*_{\bar{r}, \bar{y}'', \lambda}\right]\frac{\partial}{\partial\lambda}Z^*_{\bar{r}, \bar{y}'', \lambda}\\
     +\int_{\R^N}\left[(1-\xi^{q+1})(Y^*_{\bar{r}, \bar{y}'', \lambda})^q +(1-\xi^2)\Delta Z^*_{\bar{r}, \bar{y}'', \lambda}\right]\frac{\partial}{\partial\lambda}Y^*_{\bar{r}, \bar{y}'', \lambda},
     \end{aligned}
     \end{equation*}
     and
     $$E_3=\int_{\R^N}\left( 2\xi \nabla\xi\cdot\nabla Y^*_{\bar{r}, \bar{y}'', \lambda}+\xi\Delta\xi Y^*_{\bar{r}, \bar{y}'', \lambda}\right)\frac{\partial}{\partial\lambda}Z^*_{\bar{r}, \bar{y}'', \lambda}+(2\xi \nabla\xi\cdot\nabla Z^*_{\bar{r}, \bar{y}'', \lambda}+\xi\Delta\xi Z^*_{\bar{r}, \bar{y}'', \lambda})\frac{\partial}{\partial\lambda}Y^*_{\bar{r}, \bar{y}'', \lambda}.$$
     Next, we can divide $\frac{\partial I(Y^*_{\bar{r}, \bar{y}'', \lambda},Z^*_{\bar{r}, \bar{y}'', \lambda})}{\partial \lambda}$ as
     \begin{equation*}
        \begin{aligned}
        \frac{\partial I(Y^*_{\bar{r}, \bar{y}'', \lambda},Z^*_{\bar{r}, \bar{y}'', \lambda})}{\partial \lambda} & = \int_{\R^N}V(y)\frac{\partial}{\partial \lambda}(Y^*_{\bar{r}, \bar{y}'', \lambda}\cdot Z^*_{\bar{r}, \bar{y}'', \lambda})
        +\left( \int_{\R^N} \big(-\Delta Z^*_{\bar{r}, \bar{y}'', \lambda} -(Y^*_{\bar{r}, \bar{y}'', \lambda})^{q}\big)\frac{\partial}{\partial \lambda}Y^*_{\bar{r}, \bar{y}'', \lambda} \right)\\
        &\hspace{1em}
        +\quad \left( \int_{\R^N} \big(-\Delta Y^*_{\bar{r}, \bar{y}'', \lambda} -(Z^*_{\bar{r}, \bar{y}'', \lambda})^{p}\big)\frac{\partial}{\partial \lambda}Z^*_{\bar{r}, \bar{y}'', \lambda} \right)\\
        &=\int_{\R^N}V(y)\frac{\partial}{\partial \lambda}(Y^*_{\bar{r}, \bar{y}'', \lambda}\cdot Z^*_{\bar{r}, \bar{y}'', \lambda})
        + \int_{\R^N} \bigg(\sum\limits_{j=1}^m U^q_{x_j,\lambda}-(Y^*_{\bar{r}, \bar{y}'', \lambda})^{q}\bigg)\frac{\partial}{\partial \lambda}Y^*_{\bar{r}, \bar{y}'', \lambda} \\
        &:=I_1+I_2.
        \end{aligned}
    \end{equation*}
    Therefore, we have
    \[
        \frac{\partial I(Y_{\bar{r}, \bar{y}'', \lambda},Z_{\bar{r}, \bar{y}'', \lambda})}{\partial \lambda} = I_1 + I_2 + E_1 + E_2 - E_3.
    \]

\textbf{Estimate $I_1$}. We divide $\R^N$ into $\R^N = (\R^N \backslash S) \cup S$, where $S$ is defined in \eqref{eq:S}, then
\begin{align*}
        I_1&=m\int_{\R^N} V(y)\frac{\partial}{\partial \lambda}(Y^*_{\bar{r}, \bar{y}'', \lambda}(y)V_{x_1,\lambda}(y))\dy \\
        &=m\int_{S}V(y)\frac{\partial}{\partial \lambda}(U_{x_1,\lambda}(y)V_{x_1,\lambda}(y))\dy+m\int_{S}V(y)\frac{\partial}{\partial \lambda}\Big(\big(Y^*_{\bar{r}, \bar{y}'', \lambda}(y)-U_{x_1,\lambda}(y)\big)V_{x_1,\lambda}(y)\Big)\dy\\
        &\quad+m\int_{\R^N\backslash S}V(\bar{r},\bar{y}'')\frac{\partial}{\partial \lambda}\left(Y^*_{\bar{r}, \bar{y}'', \lambda}(y)V_{x_1,\lambda}(y)\right)\dy\\
        &:=J_{10}+J_{11}+J_{12}.
\end{align*}
Next, we estimate $J_{10}$, $J_{11}$ and $J_{12}$.
\begin{align*}
    J_{10}&=mV(\bar{r},\bar{y}'') \frac{\partial}{\partial \lambda} \left( \int_{\R^N} - \int_{\R^N \backslash S}\right) U_{x_1,\lambda}V_{x_1,\lambda} + m\frac{\partial}{\partial \lambda}\int_{S}(V(\bar{r},\bar{y}'') - V(y)) U_{x_1,\lambda}V_{x_1,\lambda} \\
    & = mV(\bar{r},\bar{y}'') \frac{\partial}{\partial \lambda}  \int_{\R^N}U_{x_1,\lambda}V_{x_1,\lambda} + \frac{m}{\lambda} O\left(\int_{\R^N\backslash S}V(\bar{r},\bar{y}'')U_{x_1,\lambda}V_{x_1,\lambda} +\int_{S}|V(y)-V(\bar{r},\bar{y}'')|U_{x_1,\lambda}V_{x_1,\lambda}\right)\\
    &=-\frac{2mV(\bar{r},\bar{y}'')}{\lambda^3}\int_{\R^N}U_{0,1}V_{0,1}+O\left(\frac{m}{\lambda^{3+\varepsilon}}\right).
\end{align*}
Next, we estimate $J_{11}$. Recall that $Y^*_{\bar{r}, \bar{y}'', \lambda} = \sum_{j=1}^m U_{x_j,\lambda} + \varphi$, we can use Lemma \ref{nnl2-19-1} to obtain
\begin{align*}
     J_{11}&\leq C\frac{m}{\lambda}\int_{S} V_{x_1,\lambda}(y)\left(\sum\limits_{j=2}^m U_{x_j,\lambda}(y)+\varphi(y)\right) \ \dy\\
    &=Cm\lambda^{N-3}\int_{S} V_{0,1}(\lambda(y-x_1)) \sum\limits_{j=2}^m \left( U_{0,1}\big(\lambda(y-x_j)) + \frac{B_{11}}{(\lambda|x_j-x_1|)^{N-2}}w(\lambda(y-x_1))\right) + O\left( \frac{1}{m\lambda^{3+\varepsilon}} \right)\\
    &=O\left(\frac{m}{\lambda^3}\cdot \left(\frac{m}{\lambda} \right)^{N-4}\right)= O\left(\frac{m}{\lambda^{3+\varepsilon}}\right).
\end{align*}
Regarding $J_{12}$, we can use Lemma \ref{lemma:U} to get
\begin{equation}\label{J12}
    \begin{aligned}
        J_{12}&\leq C\frac{m}{\lambda^3}\int_{\R^N\backslash S}\frac{\lambda^N}{(1+\lambda|y-x_1|)^{N-2}}\sum\limits_{j=1}^{m}\frac{1}{(1+\lambda|y-x_j|)^{N-2}} \ \dy\\
        &\quad + C\frac{m}{\lambda^3} \left(\frac{m}{\lambda}\right)^{p(N-2)-2} \int_{\R^N\backslash S}\frac{\lambda^N}{(1+\lambda|y-x_1|)^{N-2}} \sum\limits_{j=1}^{m}\frac{1}{(1+m|y-x_j|)^{\min \{N-2,p(N-3-\theta)-2\}}} \ \dy\\
        & : =J_{121} + J_{122}.
    \end{aligned}
\end{equation}
If $p(N-3)>N$, then as shown in Lemma \ref{l}, then $J_{122}$ can be bounded by $J_{121}$, therefore, we can employ Lemma \ref{B2} to obtain
\begin{equation*}
    \begin{aligned}
        J_{12}&\leq C\frac{m}{\lambda^3}\int_{\R^N\backslash S}\frac{\lambda^N \ \dy}{(1+\lambda|y-x_1|)^{2N-4}}+ C\frac{m}{\lambda^3}\sum\limits_{j=2}^{m}\frac{1}{(\lambda|x_j-x_1|)^{\tau}}\int_{\R^N \backslash S} \frac{\lambda^N \ \dy}{(1+\lambda|y-x_1|)^{2N-4-\tau}}  \\
        &\leq O\left(\frac{m}{\lambda^{3+\varepsilon}}\right).
    \end{aligned}
\end{equation*}
On the other hand, if $p(N-3)\leq N$, $J_{121}$ will be bounded by $J_{122}$. We consider the cases $N \geq 6$, $p \in (\frac{N}{N-2}, \frac{N+2}{N-2})$ and $N =5$, $p \in (2, \frac{7}{3})$ separately. If $N\geq6$, we have
\begin{align}\label{J_12_1}
    p(N-3)-4-\tau>0,
\end{align}
then, similarly, we have
\begin{align*}
    J_{12} & \leq C \frac{m}{\lambda^3} \left(\frac{m}{\lambda}\right)^{p(N-2)-2}\int_{\R^N\backslash S}\frac{\lambda^N}{(1+\lambda|y-x_1|)^{N-2}}\sum\limits_{j=1}^{m}\frac{1}{(1+m|y-x_j|)^{p(N-3-\theta)-2}} \ \dy\\
    &\leq C\frac{m^{1+p(1+\theta)}}{\lambda^{3+{p(1+\theta)}}} \int_{\R^N\backslash S} \frac{\lambda^N}{(1+\lambda|y-x_1|)^{N-2}}\cdot \sum\limits_{j=1}^{m}\frac{1}{(1+\lambda|y-x_j|)^{p(N-3-\theta)-2}} \ \dy\\
    &\leq C\frac{m^{1+p(1+\theta)}}{\lambda^{3+{p(1+\theta)}}} \int_{\R^N\backslash S}   \frac{\lambda^{N}}{(1+\lambda|y-x_1|)^{p(N-3-\theta)+N-4-\tau}}  \ \dy\\
    &=O\left(\frac{m}{\lambda^{3+\varepsilon}} \right).
\end{align*}
If $N=5$, \eqref{J_12_1} may not hold and we split the integral domain into $(\R^N \backslash S) \cap \textbf{B}_{R_0} = \textbf{B}_{R_0} \backslash S$ and $\textbf{B}_{R_0}^c$ for some large $R_0$. In $\textbf{B}_{R_0} \backslash S$, we can use the same method in the case $N\geq 6$. In $\textbf{B}_{R_0}^c$, we have $|y-x_j| \approx |y-x_1| > \frac{R_0}{2}$ if we choose $R_0$ large enough. Then, choosing $\theta = \theta(p) > 0$ small enough, such that $p(N-3 - \theta) - 4 > 0$, we have
\begin{equation*}
    \begin{aligned}
        J_{12} & \leq C\frac{m^{1+p(1+\theta)}}{\lambda^{3+{p(1+\theta)}}} \left( \int_{\textbf{B}_{R_0}\backslash S}   \frac{\lambda^{N} \ \dy}{(1+\lambda|y-x_1|)^{p(N-3-\theta)+N-4-\tau}}  + \int_{ \textbf{B}_{R_0}^{c}} \frac{ m \lambda^{N} \ \dy}{(1+\lambda|y-x_1|)^{p(N-3-\theta)+N-4}}   \right) \\
        & \leq C\frac{m^{1+p(1+\theta)}}{\lambda^{3+{p(1+\theta)} -4 - \tau +p(N-3-\theta)}} + C\frac{m^{2+p(1+\theta)}}{\lambda^{3+{p(1+\theta)} }}  = O\left(\frac{m}{\lambda^{3+\varepsilon}} \right).
    \end{aligned}
\end{equation*}
Thus, we have
\begin{align}\label{I1}
    I_1 = J_{10} + J_{11} + J_{12}= -\frac{2mV(\bar{r},\bar{y}'')}{\lambda^3}\int_{\R^N}U_{0,1}V_{0,1} + O\left(\frac{m}{\lambda^{3+\varepsilon}}\right).
\end{align}

\textbf{Estimate $I_2$.} Next we estimate the integral $I_2$. Dividing $\R^N$ into $\cup_{i=1}^m \Omega_i$, we have
\begin{align*}
        I_2&=m\int_{\Omega_1} \left( \sum\limits_{j=1}^m U^q_{x_j,\lambda}-(Y^*_{\bar{r}, \bar{y}'', \lambda})^{q}  \right)\frac{\partial}{\partial \lambda}Y^*_{\bar{r}, \bar{y}'', \lambda} \\
        &=m\int_{S \cap \Omega_1} \left( \sum\limits_{j=1}^m U^q_{x_j,\lambda}-(Y^*_{\bar{r}, \bar{y}'', \lambda})^{q}  \right) \frac{\partial}{\partial \lambda}U_{x_1,\lambda}+ m\int_{S \cap \Omega_1} \left( \sum\limits_{j=1}^m U^q_{x_j,\lambda}-(Y^*_{\bar{r}, \bar{y}'', \lambda})^{q}  \right) \frac{\partial}{\partial \lambda}\left(\sum\limits_{j=2}^m U_{x_j,\lambda}+\varphi\right) \ \\
        &\quad +m\int_{\Omega_1\backslash S} \left( \sum\limits_{j=1}^m U^q_{x_j,\lambda}-(Y^*_{\bar{r}, \bar{y}'', \lambda})^{q}  \right)\frac{\partial}{\partial \lambda}Y^*_{\bar{r}, \bar{y}'', \lambda}\\
        &:=J_{20}+J_{21}+J_{22}.
\end{align*}
Recall that for any $y\in S$,
$$\sum\limits_{j=2}^m U_{x_j,\lambda}(y)+\varphi(y) \leq C\leq CU_{x_1,\lambda}(y).$$
We have for any $\delta \in (0, q-1)$,
\begin{align}
    J_{20}&=m\int_{S} \left( \sum\limits_{j=1}^m U^q_{x_j,\lambda}-(Y^*_{\bar{r}, \bar{y}'', \lambda})^{q}  \right)\frac{\partial}{\partial \lambda}U_{x_1,\lambda} \nonumber \\
    &=m\int_{S} \left( -qU_{x_1,\lambda}^{q-1} \left(\sum\limits_{j=2}^m U_{x_j,\lambda}+\varphi \right)  \right)\frac{\partial}{\partial \lambda}U_{x_1,\lambda}+O\left(\frac{m}{\lambda}\int_{S}U_{x_1,\lambda}^{q-\delta} (\sum\limits_{j=2}^m U_{x_j,\lambda}+\varphi)^{1+\delta}\right) \label{J20} \\
    &\hspace{3em}+O\left(\frac{m}{\lambda}\int_{S}U_{x_1,\lambda} \sum\limits_{j=2}^m U_{x_j,\lambda}^{q}\right). \nonumber
\end{align}
We can deduce from Lemma 2.5 in \cite{GHPY} that
\begin{align}
    &\quad \int_{S}  -qU_{x_1,\lambda}^{q-1} \left( \sum\limits_{j=2}^m U_{x_j,\lambda} \right) \frac{\partial}{\partial \lambda}U_{x_1,\lambda} \nonumber \\
    &=\frac{1}{\lambda}\int_{\lambda(S - \{x_1 \})} -qU_{0,1}(z)^{q-1}\left(\frac{N}{q+1}U_{0,1}(z)+z\cdot \nabla U_{0,1}(z)\right) \left(\sum\limits_{j=2}^m U_{0,1}(z+\lambda(x_1-x_j))\right) \label{J201} \\
    & = \frac{1}{\lambda}\int_{\lambda(S - \{x_1 \})} -qU_{0,1}(z)^{q-1}\left(\frac{N}{q+1}U_{0,1}(z)+z\cdot \nabla U_{0,1}(z)\right) \sum\limits_{j=2}^m \left(\frac{a_{N,p}}{(\lambda|x_1-x_j|)^{N-2}} + \frac{Cz^2}{(\lambda|x_1-x_j|)^N}\right) \nonumber \\
    &=\frac{1}{\lambda}\sum\limits_{j=2}^m\frac{B_{21}}{(\lambda|x_1-x_j|)^{N-2}}+O\left(\frac{m^{N-2}}{\lambda^{N-1+\varepsilon}}\right)=\frac{1}{\lambda}\sum\limits_{j=2}^m\frac{B_{21}}{(\lambda|x_1-x_j|)^{N-2}}+O\left(\frac{1}{\lambda^{3+\varepsilon}}\right), \nonumber
\end{align}
where
\[
    \begin{split}
        B_{21}& =a_{N,p}\int_{\R^N} -qU_{0,1}(z)^{q-1}\left(\frac{N}{q+1}U_{0,1}(z)+z\cdot \nabla U_{0,1}(z)\right)\ \dz \\
        &  = -a_{N,p}\int_{\R^N} \left( \frac{qN}{q+1}U_{0,1}^q(z)+z\cdot \nabla (U_{0,1}^q(z))\right) \ \dz  = \frac{a_{N,p}N}{q+1} \int_{\R^N} U_{0,1}^q(z) \ \dz > 0.
    \end{split}
\]
Similarly, from Lemma \ref{nnl2-19-1}, we have
\begin{equation}\label{J202}
    \begin{aligned}
        \int_{S}  -qU_{x_1,\lambda}^{q-1}\frac{\partial}{\partial \lambda}U_{x_1,\lambda}\cdot \varphi=\frac{1}{\lambda}\sum\limits_{j=2}^m\frac{B_{22}}{(\lambda|x_1-x_j|)^{N-2}}+O\left(\frac{1}{\lambda^{3+\varepsilon}}\right),
    \end{aligned}
\end{equation}
where
\begin{align*}
    B_{22} & =-B_{0}\int_{\R^N} qU_{0,1}(z)^{q-1}\left(\frac{N}{q+1}U_{0,1}(z)+z\cdot \nabla U_{0,1}(z)\right)\cdot w(z) \ \dz \\
    & = -B_{0}\int_{\R^N} \left(\frac{qN}{q+1}U_{0,1}^q(z)+z\cdot \nabla (U_{0,1}^q(z))\right)\cdot w(z) \ \dz \\
    & = B_{0}\int_{\R^N} -\Delta V_{0,1}(z) \left(\frac{N}{q+1}w(z)+z\cdot \nabla w(z)\right) \ \dz  \\
    & = B_0 \int_{\R^N}  \left(\frac{N(q+2)}{q+1}V_{0,1}^p(z) - \frac{1}{p}z\cdot \nabla (V_{0,1}^p(z))\right) \ \dz  \\
    & = B_0 \int_{\R^N}  \left(\frac{N(q+2)}{q+1} + \frac{N}{p} \right)V_{0,1}^p(z) \ \dz > 0.
\end{align*}
Here we used the equation for $w$: $-\Delta w = V_{0,1}^{p-1}$ in $\R^N$. In addition, since we can choose $\delta$ small such that $(q-\delta)(N-2)>N$, we can obtain
\begin{equation}\label{J203}
    \int_{S}U_{x_1,\lambda}^{q-\delta} \left(\sum\limits_{j=2}^m U_{x_j,\lambda}+\varphi \right)^{1+\delta}\leq C\left(\frac{m^{N-2}}{\lambda^{N-2}}\right)^{1+\delta}\int_{S}U_{x_1,\lambda}^{q-\delta}\leq O\left(\frac{1}{\lambda^{2+\varepsilon}} \right).
\end{equation}
Similarly, we have
\begin{align}\label{J204}
    \int_{S}U_{x_1,\lambda} \sum\limits_{j=2}^m U_{x_j,\lambda}^{q}\leq C \int_{S}U_{x_1,\lambda}^{q-\delta} \left(\sum\limits_{j=2}^m U_{x_j,\lambda}\right)^{1+\delta}=O\left(\frac{1}{\lambda^{2+\varepsilon}}\right).
\end{align}
Combine \eqref{J20}-\eqref{J204}, we have
\begin{equation}\label{J205}
    J_{20}=m\sum\limits_{j=2}^m\frac{B_2}{\lambda^{N-1}|x_1-x_j|^{N-2}}+O\left(\frac{m}{\lambda^{3+\varepsilon}}\right),
\end{equation}
where $B_2=B_{21}+B_{22}>0.$

Regarding $J_{21}$, we can use \eqref{J203} to get
\begin{align}\label{J21}
    J_{21} & =m\int_{S} \left( \sum\limits_{j=1}^m U^q_{x_j,\lambda}-(Y^*_{\bar{r}, \bar{y}'', \lambda})^{q}  \right)\frac{\partial}{\partial \lambda}\left(\sum\limits_{j=2}^m U_{x_j,\lambda}+\varphi\right)\nonumber\\
    &\leq \frac{Cm}{\lambda}\int_{S} U_{x_1,\lambda}^{q-1}\left(\sum\limits_{j=2}^m U_{x_j,\lambda}+\varphi\right)^2+U_{x_1,\lambda}^{q-1-\delta}\left(\sum\limits_{j=2}^m U_{x_j,\lambda}+\varphi \right)^{2+\delta}+\sum\limits_{j=2}^m U_{x_j,\lambda}^{q}\left(\sum\limits_{j=2}^m U_{x_j,\lambda}+\varphi \right)\nonumber\\
    &\leq \frac{Cm}{\lambda} \int_{S}U_{x_1,\lambda}^{q-\delta} \left( \sum\limits_{j=2}^m U_{x_j,\lambda}+\varphi \right)^{1+\delta}=O\left(\frac{m}{\lambda^{3+\varepsilon}}\right).
\end{align}

Next, we estimate $J_{22}$. From Lemma \ref{lemma:U}, we have
\begin{align}
        J_{22} & \leq \frac{Cm}{\lambda}\int_{\Omega_1\backslash S} (Y^*_{\bar{r}, \bar{y}'', \lambda})^{q+1} \nonumber \\
        & \le Cm\lambda^{N-1} \int_{\Omega_1 \setminus S} \left(\sum_{j=1}^m \frac{1}{|\lambda(y-x_j)|^{N-2}}\right)^{q+1} \dy \label{J_22_1} \\
        &\ + Cm\lambda^{N-1} \int_{\Omega_1 \setminus S} \left(\sum_{j=1}^m \left(\frac{m}{\lambda} \right)^{p(N-2)-2} \frac{1}{(1+m|y-x_j|)^{\min\{p(N-3-\theta)-2, N-2\}}}\right)^{q+1}\dy  \nonumber \\
        & : = J_{221} + J_{222}. \nonumber
\end{align}
We first estimate $J_{221}$:
\begin{equation*}
    J_{221}\leq C \frac{m}{\lambda} \int_{\Omega_1 \setminus S} \frac{\lambda^N dy}{|\lambda(y-x_1)|^{(N-2-\tau)(q+1)}}
    \le C\left(\frac{m}{\lambda}\right)^{(q+1)(N-2-\tau)-N+1} = O\left(\frac{m}{\lambda^{3+\varepsilon}}\right),
\end{equation*}
where we have used $(q+1)(N-2-\tau)-N>N-2$.

Next, we estimate $J_{222}$. If $p(N-3)>N$, $J_{222}$ can be bounded by $J_{221}$. Hence, we only need to estimate $J_{222}$ when $p(N-3)\le N$. We follow the same strategy for $J_{12}.$ If $N \ge 6$, then
\begin{equation}\label{1-25-3}
    [p(N-3)-2-\tau](q+1)>N.
\end{equation}
Hence, we have
\begin{align*}
     J_{222} \leq & C\left(\frac{m}{\lambda}\right)^{p(q+1)(1+\theta)+1}\int_{\Omega_1 \setminus S} \frac{\lambda^{N}\dy}{(1+\lambda|y-x_1|)^{(p(N-3-\theta)-2-\tau)(q+1)}} \\
     &\leq C\left(\frac{m}{\lambda}\right)^{(q+1)(p(N-2)-2-\tau)-N+1} = O\left(\frac{m}{\lambda^{3+\varepsilon}}\right),
\end{align*}
since $(q+1)(p(N-2)-2-\tau)-N>(N-2-\tau)(q+1)-N>N-2$.

If $N = 5$, then \eqref{1-25-3} may not hold. Reasoning as in the case of $J_{12}$, we choose $R_0$ large enough. Then it holds
\begin{equation}
    \begin{split}
        |J_{222}| & \leq Cm\lambda^{N-1} \left( \int_{(\Omega_1 \cap \textbf{B}_{R_0}) \backslash S} + \int_{\Omega_1 \backslash \textbf{B}_{2\delta}}\right) \left( \sum_{j=1}^m \left(\frac{m}{\lambda}\right)^{p(N-2)-2} \frac{1}{(1+m|y-x_j|)^{p(N-3-\theta)-2}}\right)^{q+1}\dy \\
        & \leq C m\lambda^{N-1} \int_{\Omega_1 \backslash \textbf{B}_{2\delta}} \left( \left(\frac{m}{\lambda}\right)^{p(1+\theta)} \frac{m}{(1+\lambda|y-x_j|)^{p(N-3-\theta)-2}} \right)^{q+1} \ \dy + O\left(\frac{m}{\lambda^{3 + \varepsilon}}\right) \\
        & \leq C\left( \frac{m}{\lambda} \right)^{1+p(1+\theta)(q+1)} \frac{m^{q+1}}{\lambda^{(q+1)(p(N-3-\theta)-2)-N}} + O\left(\frac{m}{\lambda^{3 + \varepsilon}}\right) \\
        & = O\left(\frac{m}{\lambda^{3 + \varepsilon}}\right),
    \end{split}
\end{equation}
where we have used $[p(N-3)-2](q+1)>5$ when $N = 5$ and $p>2$. Thus, we have
\begin{align}\label{J22}
    J_{22} = J_{221} + J_{222} = O\left(\frac{m}{\lambda^{3+\varepsilon}}\right).
\end{align}
Combine \eqref{J205}, \eqref{J21} and \eqref{J22}, we have
\begin{equation}\label{I2}
    I_2 = J_{20} + J_{21} + J_{22} = m\sum\limits_{j=2}^m\frac{B_2}{\lambda^{N-1}|x_1-x_j|^{N-2}}+O\left(\frac{m}{\lambda^{3+\varepsilon}}\right).
\end{equation}

 \textbf{Estimate $E_1$, $E_2$ and $E_3$.} Recall that $\text{supp} \ \xi = \textbf{B}_{2 \delta}= \{ y = (y',y''):|(y',y'') - (r_0,y_0'')| <2\delta \}$, and $\xi=1$ in $\textbf{B}_{\delta}$. Through a similar computation as $I_1$, we have
     \begin{equation}\label{E_1}
         \begin{aligned}
             E_1&=O\left( \int_{\textbf{B}_{\delta}^c}V(y)\frac{\partial}{\partial \lambda}(Y^*_{\bar{r}, \bar{y}'', \lambda}\cdot Z^*_{\bar{r}, \bar{y}'', \lambda})\right)=O\left(\frac{1}{\lambda^{\varepsilon}} \right)I_1=O\left(\frac{m}{\lambda^{3+\varepsilon}}\right).
         \end{aligned}
     \end{equation}
    Then, through a similar computation as in $J_{22}$, we have
    \begin{equation}\label{E_2}
        \begin{aligned}
            E_2&=O\left(m\int_{\Omega_1\cap B^c}\frac{\partial}{\partial \lambda}(Z^*_{\bar{r}, \bar{y}'', \lambda})^{p+1}+\frac{\partial}{\partial \lambda}(Y^*_{\bar{r}, \bar{y}'', \lambda})^{q+1}\right)\\
            &\leq O\left(m\int_{\Omega_1\cap B^c}\frac{\partial}{\partial \lambda}(Z^*_{\bar{r}, \bar{y}'', \lambda})^{p+1}\right)+O(J_{22})=O\left(\frac{m}{\lambda^{3+\varepsilon}}\right).
        \end{aligned}
    \end{equation}
    Finally, we estimate ${E_3}$. Since $\nabla Y^*_{\bar{r}, \bar{y}'', \lambda}(y)=O\big(Y^*_{\bar{r}, \bar{y}'', \lambda}(y)\big)$ and $\nabla Z^*_{\bar{r}, \bar{y}'', \lambda}(y)=O\big(Z^*_{\bar{r}, \bar{y}'', \lambda}(y)\big)$ in $A=\{ y = (y',y''): \delta < |(y',y'') - (r_0,y_0'')| <2\delta \}$, we can estimate each term in $E_3$ in a similar way to $I_1$ to obtain
     \begin{align}\label{E_3}
         E_3=O\left(\frac{1}{\lambda}\int_{A}Y^*_{\bar{r}, \bar{y}'', \lambda}Z^*_{\bar{r}, \bar{y}'', \lambda}\right)= O\left(\frac{m}{\lambda^{3+\varepsilon}}\right).
     \end{align}
     Combine \eqref{I1}, \eqref{I2}, \eqref{E_1}, \eqref{E_2} and \eqref{E_3}, we can get the desired expansion of $\frac{\partial I (Y^*_{\bar{r}, \bar{y}'', \lambda},Z^*_{\bar{r}, \bar{y}'', \lambda})}{\partial \lambda}$.
\end{proof}

Following similar arguments as in Lemma \ref{expansion_lambda}, we can prove the following lemmas.
\begin{lemma}\label{expansion_2}
    If $N\geq5$, then it holds that
    \[
        \frac{\partial I(Y_{\bar{r}, \bar{y}'', \lambda},Z_{\bar{r}, \bar{y}'', \lambda})}{\partial \bar{r}}=m\left( \frac{B_1}{\lambda^2}\frac{\partial V(\bar{r},\bar{y}'')}{\partial \bar{r}}+\sum\limits_{j=2}^m\frac{B_2}{\bar{r}\lambda^{N-1}|x_1-x_j|^{N-2}}+O\left(\frac{1}{\lambda^{1+\varepsilon}}\right)\right),
    \]
    and
    \[
        \frac{\partial I(Y_{\bar{r}, \bar{y}'', \lambda},Z_{\bar{r}, \bar{y}'', \lambda})}{\partial \bar{y_k}''}=m\left( \frac{B_1}{\lambda^2}\frac{\partial V(\bar{r},\bar{y}'')}{\partial \bar{y_k}''}+O\left(\frac{1}{\lambda^{1+\varepsilon}}\right)\right),\,k=3,\cdots,N,
    \]
    where $B_1$ and $B_2$ are the same positive constants in Lemma \ref{expansion_lambda}.
\end{lemma}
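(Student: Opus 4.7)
The proof proceeds by mirroring the strategy developed for Lemma \ref{expansion_lambda}, replacing the parameter $\lambda$ with $\bar r$ (resp.\ $\bar y_k''$) throughout and keeping track of the correct scaling. Writing $\Box$ for either $\bar r$ or $\bar y_k''$, and recalling that the cutoff $\xi(y)=\xi(|y'|,y'')$ does not depend on the parameters, I would first pass from $(Y_{\bar r,\bar y'',\lambda},Z_{\bar r,\bar y'',\lambda})$ to its uncut version $(Y^*_{\bar r,\bar y'',\lambda},Z^*_{\bar r,\bar y'',\lambda})$. Since $\partial_\Box\xi=0$, the analogues $E_1^\Box,E_2^\Box,E_3^\Box$ of the three cutoff errors in Lemma \ref{expansion_lambda} are handled by exactly the same interaction/decay arguments, the only difference being that $\partial_\Box$ produces a factor $\lambda^{n_l}=\lambda$ rather than $\lambda^{-1}$. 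This yields
\[
   \frac{\partial I(Y_{\bar r,\bar y'',\lambda},Z_{\bar r,\bar y'',\lambda})}{\partial\Box}
   =\frac{\partial I(Y^*_{\bar r,\bar y'',\lambda},Z^*_{\bar r,\bar y'',\lambda})}{\partial\Box}
   +O\!\left(\frac{m}{\lambda^{1+\varepsilon}}\right).
\]

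Next I split $\partial_\Box I(Y^*,Z^*)=I_1+I_2+I_3$ in the same manner: $I_1=\int V(y)\,\partial_\Box(Y^*Z^*)$, and $I_2,I_3$ collect the nonlinear contributions. For $I_1$, using the symmetry of $V$ under the $\mathbb Z_m$-action together with $\partial_\Box(U_{x_j,\lambda}V_{x_j,\lambda})=-\nabla(U_{x_j,\lambda}V_{x_j,\lambda})\cdot\partial_\Box x_j$, integration by parts transfers the derivative onto $V$; replacing $V(y)$ by its value at $x_j$ and controlling the remainder via a Taylor expansion around $(\bar r,\bar y'')$ produces
\[
   I_1=m\,\frac{1}{\lambda^2}\,\frac{\partial V(\bar r,\bar y'')}{\partial\Box}\int_{\R^N}U_{0,1}V_{0,1}\,\dy
   +O\!\left(\frac{m}{\lambda^{2+\varepsilon}}\right),
\]
which identifies the constant $B_1=\int U_{0,1}V_{0,1}$ as in Lemma \ref{expansion_lambda}. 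The interaction cross terms in $Y^*Z^*$ (i.e.\ $U_{x_1,\lambda}V_{x_j,\lambda}$ with $j\ne 1$) are absorbed into the remainder by the same interaction bounds used in \eqref{E_1}.

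For $I_2+I_3$, I invoke the Lane-Emden identities $-\Delta V_{x_j,\lambda}=U_{x_j,\lambda}^q$ and $-\Delta U_{x_j,\lambda}=V_{x_j,\lambda}^p$ together with the decompositions of $(Y^*)^{q+1}$ and $(Z^*)^{p+1}$ used in the proof of Lemma \ref{expansion_lambda}. After restricting to the fundamental sector $\Omega_1$ and summing, the dominant contribution collapses to
\[
   I_2+I_3=-m\,\frac{\partial}{\partial\Box}\!\int_{\R^N}V_{x_1,\lambda}^{p}\sum_{j=2}^{m}V_{x_j,\lambda}\,\dy+O\!\left(\frac{m}{\lambda^{1+\varepsilon}}\right),
\]
the error coming from the remainder terms $R^1,R^2$ (controlled via \eqref{I_24}, \eqref{I_25}) and from the complement $\R^N\setminus\Omega_1$ (bounded as in \eqref{I_23}). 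Using the asymptotics of Lemma \ref{L1}, $\int V_{x_1,\lambda}^{p}V_{x_j,\lambda}\sim B_2\,\lambda^{-(N-2)}|x_1-x_j|^{-(N-2)}$. Since $|x_1-x_j|=2\bar r\,|\sin((j-1)\pi/m)|$ is linear in $\bar r$ and \emph{independent} of $\bar y''$, differentiating in $\Box=\bar r$ yields the claimed $\bar r^{-1}$-weighted interaction sum, whereas for $\Box=\bar y_k''$ the interaction contribution vanishes identically, leaving only the $V$-gradient term and the error.

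The main obstacle is the bookkeeping in $I_2+I_3$: one must verify that the boundary terms arising when restricting to $\Omega_1$, the off-diagonal remainders $R^1,R^2$, and the cross interactions $U_{x_1,\lambda}^qU_{x_k,\lambda}$ with $k\ne j$ all collect into the required error of order $m\lambda^{-1-\varepsilon}$; this is subtler than in the $\lambda$-derivative case because the prefactor from $\partial_\Box$ is $\lambda$ instead of $\lambda^{-1}$, so one loses two powers of $\lambda$ and must choose the exponents $\alpha$ in the inequalities \eqref{I_23}--\eqref{I_25} more sharply. Once that accounting is done, combining $I_1$, $I_2+I_3$ and the cutoff errors yields both formulas in Lemma \ref{expansion_2}, with the same constants $B_1,B_2$ identified in Lemma \ref{expansion_lambda}.
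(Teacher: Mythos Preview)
Your proposal is correct and follows essentially the same approach as the paper, which in fact gives no details at all for Lemma~\ref{expansion_2} beyond the single sentence ``Following the similar arguments in Lemma~\ref{expansion_lambda}, we can prove the following lemma.''  You supply considerably more detail than the authors do: you correctly observe that the cutoff $\xi$ is independent of $(\bar r,\bar y'')$ so the $E_i^\Box$ errors behave identically up to the $\lambda^{2}$ scaling shift; you handle $I_1$ by the natural integration-by-parts argument (which is genuinely different from the $\lambda$-case since $\partial_\Box\int U_{x_1,\lambda}V_{x_1,\lambda}=0$); and you correctly explain why the interaction sum survives for $\Box=\bar r$ (via $|x_1-x_j|=2\bar r|\sin((j-1)\pi/m)|$) but vanishes for $\Box=\bar y_k''$.

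Two minor remarks.  First, the paper's assertion that ``$B_1$ and $B_2$ are the same positive constants'' as in Lemma~\ref{expansion_lambda} is loose: your computation gives $B_1=\int U_{0,1}V_{0,1}$ here, while the proof of Lemma~\ref{expansion_lambda} produces $B_1=2\int U_{0,1}V_{0,1}$ there, so the constants agree only up to a harmless factor.  Second, your worry that one must ``choose the exponents $\alpha$ more sharply'' is unwarranted: the target error in Lemma~\ref{expansion_2} is $O(\lambda^{-1-\varepsilon})$, exactly $\lambda^{2}$ weaker than the $O(\lambda^{-3-\varepsilon})$ of Lemma~\ref{expansion_lambda}, so the same $\alpha$-choices in \eqref{I_23}--\eqref{I_25} carry over without modification.
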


\section{Some technical Estimates}
In this section, we present some technical estimates used in the previous sections. The proof of Lemma D.1-D.4 are essential and could be found in \cite{WY}.
\begin{lemma}\label{A1}
    It holds that
    \begin{equation*}
    \sum\limits_{j=2}^{m} \dfrac{1}{|x_j-x_1|^{\alpha}} =
        \begin{cases}
            O(m^{\alpha}/\bar{r}^{\alpha}), \;\; \alpha>1; \\
            O(m^{\alpha} \log m/\bar{r}^{\alpha}), \;\; \alpha=1; \\
            O(m/\bar{r}^{\alpha}), \;\; 0<\alpha<1; \\
        \end{cases}
    \end{equation*}
\end{lemma}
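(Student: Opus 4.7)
The plan is to reduce the sum to the classical power-series $\sum_{j=1}^{M} j^{-\alpha}$ by exploiting the symmetric, equi-angular placement of the points $x_j$ on the circle of radius $\bar{r}$.

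First I would compute the chord lengths explicitly. Because
\[
x_j - x_1 = \bigl(\bar{r}(\cos\tfrac{2(j-1)\pi}{m}-1),\,\bar{r}\sin\tfrac{2(j-1)\pi}{m},\,0\bigr),
\]
a direct expansion gives
\[
|x_j - x_1|^2 = 2\bar{r}^{\,2}\Bigl(1 - \cos\tfrac{2(j-1)\pi}{m}\Bigr) = 4\bar{r}^{\,2}\sin^{2}\!\Bigl(\tfrac{(j-1)\pi}{m}\Bigr),
\]
hence $|x_j - x_1| = 2\bar{r}\,\bigl|\sin\bigl(\tfrac{(j-1)\pi}{m}\bigr)\bigr|$ for $j = 2,\dots,m$.

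Next I would use the Jordan inequality $\tfrac{2t}{\pi}\le \sin t\le t$ on $[0,\pi/2]$ together with the reflection symmetry $\sin\bigl(\tfrac{(j-1)\pi}{m}\bigr)=\sin\bigl(\tfrac{(m-j+1)\pi}{m}\bigr)$ to pair the terms $j$ and $m-j+2$. This reduces the sum to (up to a factor of $2$ and a harmless boundary term) a sum over $k = 1,\dots,\lfloor m/2\rfloor$ whose $k$-th term is of order
\[
\frac{1}{(2\bar{r}\sin(k\pi/m))^{\alpha}} \;\asymp\; \frac{m^{\alpha}}{\bar{r}^{\alpha}\,k^{\alpha}}.
\]
Therefore, up to constants depending only on $\alpha$,
\[
\sum_{j=2}^{m}\frac{1}{|x_j-x_1|^{\alpha}}\;\asymp\;\frac{m^{\alpha}}{\bar{r}^{\alpha}}\sum_{k=1}^{\lfloor m/2\rfloor}\frac{1}{k^{\alpha}}.
\]

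Finally I would invoke the standard asymptotics for the Riemann sum $\sum_{k=1}^{M} k^{-\alpha}$, namely $O(1)$ when $\alpha>1$, $O(\log M)$ when $\alpha=1$, and $O(M^{1-\alpha})$ when $0<\alpha<1$. Plugging in $M=\lfloor m/2\rfloor$ yields the three cases stated in the lemma. There is no genuine obstacle here; the only minor point requiring care is the symmetry argument ensuring that the contributions of the terms with $j$ close to $m$ (where the chord is again small) are correctly absorbed into the same bound, which is handled automatically by the substitution $k\mapsto m-k+1$.
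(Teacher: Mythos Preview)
Your argument is correct and is precisely the standard one: compute the chord length $|x_j-x_1|=2\bar r\,|\sin((j-1)\pi/m)|$, use the two-sided bound $\frac{2}{\pi}t\le\sin t\le t$ on $[0,\pi/2]$ together with the symmetry $j\leftrightarrow m-j+2$, and reduce to the elementary asymptotics of $\sum_{k\le m/2}k^{-\alpha}$. The paper does not supply its own proof of this lemma; it simply refers the reader to \cite{WY}, where exactly this computation is carried out, so your approach coincides with the intended one.
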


\begin{lemma}\label{B1}
   For any $\alpha > 0$, we have
   \[
      \sum\limits_{j=1}^{m} \dfrac{1}{(1+|y-x_j|)^{\alpha}} \leq C\left( 1 + \sum\limits_{j=2}^{m}\dfrac{1}{|x_1 - x_j|^{\alpha}} \right).
   \]
   Here, the constant $C>0$ does not depend on $k$.
\end{lemma}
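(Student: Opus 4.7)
The plan is to exploit the rotational symmetry of the configuration $\{x_j\}_{j=1}^m$ to reduce an arbitrary point $y$ to a reference centered at $x_1$. Given $y \in \mathbb{R}^N$, first select an index $j_0 = j_0(y) \in \{1,\ldots,m\}$ that minimizes $|y - x_j|$. Isolate the nearest term, which contributes at most $1$:
$$\frac{1}{(1+|y-x_{j_0}|)^\alpha} \leq 1.$$

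For the remaining indices $j \neq j_0$, the minimality of $|y - x_{j_0}|$ combined with the triangle inequality gives
$$|x_{j_0} - x_j| \leq |y - x_{j_0}| + |y - x_j| \leq 2|y - x_j|,$$
so that
$$\frac{1}{(1+|y-x_j|)^\alpha} \leq \frac{2^\alpha}{(2+|x_{j_0}-x_j|)^\alpha} \leq \frac{2^\alpha}{|x_{j_0}-x_j|^\alpha}.$$
Summing over $j \neq j_0$ yields
$$\sum_{j \neq j_0} \frac{1}{(1+|y-x_j|)^\alpha} \leq 2^\alpha \sum_{j \neq j_0} \frac{1}{|x_{j_0}-x_j|^\alpha}.$$

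The final step is the cyclic-symmetry reduction. By the explicit formula
$$x_j = \Bigl(\bar{r}\cos\tfrac{2(j-1)\pi}{m}, \bar{r}\sin\tfrac{2(j-1)\pi}{m}, \bar{y}''\Bigr),$$
the distance $|x_i - x_j|$ depends only on $(i-j) \bmod m$. Applying the cyclic shift $j \mapsto j - j_0 + 1 \pmod m$ therefore gives
$$\sum_{j \neq j_0} \frac{1}{|x_{j_0}-x_j|^\alpha} = \sum_{j=2}^m \frac{1}{|x_1-x_j|^\alpha},$$
and combining with the $j = j_0$ term produces the claimed estimate with $C = 2^\alpha$, independent of $m$.

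There is no substantive obstacle; the only subtlety worth flagging is that $j_0$ depends on $y$, so one must be careful that the right-hand side of the lemma (a sum based at $x_1$) is recovered uniformly in $y$. This is precisely what the cyclic invariance of the point configuration delivers, which is why the lemma takes this clean form and why the bound is independent of where $y$ lies in $\mathbb{R}^N$.
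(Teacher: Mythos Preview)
Your argument is correct. The paper does not supply its own proof of this lemma but cites \cite{WY}; your nearest-point selection plus cyclic-symmetry reduction is exactly the standard argument given there.
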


\begin{lemma}\label{B2}
 Suppose $\alpha > 1$ and $\beta > 1$ and $i \neq j$. Then, for any $\sigma \in [0, min (\alpha, \beta)]$, we have
   \[
      \dfrac{1}{(1+|y-x_i|)^{\alpha}}\dfrac{1}{(1+|y-x_j|)^{\beta}} \leq \dfrac{C}{|x_i - x_j|^{\sigma}} \left( \dfrac{1}{(1+|y-x_i|)^{\alpha+\beta-\sigma}} + \dfrac{1}{(1+|y-x_j|)^{\alpha+\beta-\sigma}} \right),
   \]
   where $C$ is a positive constant.
\end{lemma}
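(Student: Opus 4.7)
The plan is a direct pointwise estimate by the triangle inequality, splitting into two cases according to which of $|y-x_i|$ and $|y-x_j|$ is larger. The key input is that if $|y-x_i|\le|y-x_j|$, then
$$
|x_i-x_j|\le |y-x_i|+|y-x_j|\le 2|y-x_j|\le 2(1+|y-x_j|),
$$
so $|x_i-x_j|^\sigma\le 2^\sigma(1+|y-x_j|)^\sigma$. This is the one place where the distance between the centers is absorbed into the decay factor at the farther point.

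First I would handle the case $|y-x_i|\le|y-x_j|$. Multiplying the claimed bound through by $|x_i-x_j|^{\sigma}$, it suffices to show
$$
\frac{|x_i-x_j|^\sigma}{(1+|y-x_i|)^\alpha(1+|y-x_j|)^\beta}\le \frac{C}{(1+|y-x_i|)^{\alpha+\beta-\sigma}}.
$$
Applying the inequality from the previous paragraph bounds the left-hand side by
$$
\frac{2^\sigma}{(1+|y-x_i|)^\alpha(1+|y-x_j|)^{\beta-\sigma}}.
$$
Since $\sigma\le\beta$ by assumption, the exponent $\beta-\sigma$ is nonnegative; combined with $(1+|y-x_j|)\ge(1+|y-x_i|)$ this gives $(1+|y-x_j|)^{\beta-\sigma}\ge (1+|y-x_i|)^{\beta-\sigma}$, and hence the desired bound with $C=2^\sigma$.

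The symmetric case $|y-x_j|<|y-x_i|$ is treated by interchanging the roles of $i$ and $j$, now invoking the hypothesis $\sigma\le\alpha$ in the analogous monotonicity step; it produces a bound by the second term on the right-hand side. Summing the two cases completes the proof.

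The principal obstacle is essentially cosmetic: everything reduces to the triangle inequality and the monotonicity of $t\mapsto(1+t)^s$ for $s\ge 0$. The hypothesis $\sigma\le\min(\alpha,\beta)$ is used in exactly two places to keep the relevant exponents nonnegative; the hypotheses $\alpha,\beta>1$ play no role in this pointwise bound itself and only become relevant when the estimate is later summed against the configuration of points $\{x_j\}$ via Lemma \ref{B1}.
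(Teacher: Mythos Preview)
Your argument is correct and is precisely the standard elementary proof of this inequality; the paper does not supply its own proof but defers to \cite{WY}, where the same case-splitting via the triangle inequality is used. Your observation that the hypotheses $\alpha,\beta>1$ are not needed for the pointwise estimate itself is also accurate.
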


\begin{lemma}\label{B3}
   If $\sigma \in (0,N-2)$, we have
   \[
      \int_{\R^N} \dfrac{1}{|y-z|^{N-2}} \dfrac{1}{(1+|z|)^{2+\sigma}} dz \leq \dfrac{C}{(1+|y|)^{\sigma}}.
   \]
   If $\sigma > N-2$, we have
   \[
      \int_{\R^N} \dfrac{1}{|y-z|^{N-2}} \dfrac{1}{(1+|z|)^{2+\sigma}} dz \leq \dfrac{C}{(1+|y|)^{N-2}}.
   \]
\end{lemma}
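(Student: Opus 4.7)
The plan is to split the integration domain into three regions based on the geometry of the singularity at $z=y$ and the decay at the origin, then estimate each piece. Concretely, let $\Omega_1 = \{z : |y-z| \leq |y|/2\}$, $\Omega_2 = \{z : |z| \leq |y|/2\}$, and $\Omega_3 = \R^N \setminus (\Omega_1 \cup \Omega_2)$. The case $|y| \leq 1$ follows immediately: the integrand has an integrable singularity at $z=y$ (since $N-2 < N$) and decays at infinity like $|z|^{-(N+\sigma)}$ (since $(1+|z|)^{-(2+\sigma)}$ beats $|y-z|^{-(N-2)} \sim |z|^{-(N-2)}$ by a factor $|z|^{-(2+\sigma)}$), so the whole integral is $O(1)$, which matches the right-hand side. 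From here on I focus on $|y| \geq 1$.

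On $\Omega_1$, the triangle inequality gives $|z| \geq |y|/2$, so $(1+|z|)^{-(2+\sigma)} \leq C|y|^{-(2+\sigma)}$; integrating the Newton kernel over a ball of radius $|y|/2$ contributes $\int_{|y-z|\leq|y|/2}|y-z|^{-(N-2)}\,dz \leq C|y|^2$, yielding $C|y|^{-\sigma}$. On $\Omega_2$, we have $|y-z| \geq |y|/2$, hence $|y-z|^{-(N-2)} \leq C|y|^{-(N-2)}$, and we are left with
\[
C|y|^{-(N-2)} \int_{|z|\leq |y|/2} (1+|z|)^{-(2+\sigma)}\,dz.
\]
Here the case split emerges naturally: if $\sigma < N-2$ then $2+\sigma < N$ and the inner integral grows like $|y|^{N-2-\sigma}$, producing $C|y|^{-\sigma}$; if $\sigma > N-2$ then $2+\sigma > N$ and the inner integral is bounded by a constant, producing $C|y|^{-(N-2)}$. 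On $\Omega_3$ we further split according to whether $|z| \leq 2|y|$ or $|z| > 2|y|$: in the former both $|z|$ and $|y-z|$ are of order $|y|$, giving an integrand of size $|y|^{-(N+\sigma)}$ on a region of volume $O(|y|^N)$, hence $O(|y|^{-\sigma})$; in the latter $|y-z| \geq |z|/2$, so the integrand is $\leq C|z|^{-(N+\sigma)}$ and integrating in spherical coordinates yields $C|y|^{-\sigma}$.

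Summing the four contributions shows that for $|y| \geq 1$ the total is $C|y|^{-\sigma}$ when $\sigma \in (0, N-2)$, and $C|y|^{-(N-2)}$ when $\sigma > N-2$ (because in this latter regime the $\Omega_2$ piece dominates, being of slower decay than $|y|^{-\sigma}$). Combining with the trivial bound on $|y| \leq 1$ and using $|y| \sim 1+|y|$ for $|y|\geq 1$ gives the claimed estimates. I do not expect a real obstacle here; the only point requiring care is the threshold $\sigma = N-2$ at which the contribution from the region near the origin crosses over from $|y|^{-\sigma}$ to $|y|^{-(N-2)}$, and this is exactly what the statement records by separating the two ranges of $\sigma$.
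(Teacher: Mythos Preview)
Your argument is correct. The paper does not supply its own proof of this lemma; it simply states that the proofs of Lemmas~\ref{A1}--\ref{B3} ``could be found in \cite{WY}'' and omits them, so there is nothing in the paper to compare against beyond that citation. The decomposition you give (near-singularity ball, near-origin ball, and the remaining annular/exterior region) is exactly the standard proof one finds in the literature for this potential-theoretic estimate, and each of your bounds on the three (four) pieces is accurate, including the crossover at $\sigma = N-2$ coming from the $\Omega_2$ contribution.
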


\begin{lemma}[Lemma B.3, \cite{GKPY}]\label{B1_1}
Suppose that $p > 1$ and \eqref{critical hyperbola} holds. Suppose $\tau = \frac{N-4}{N-2}$ and $m \approx \lambda^{\tau} $. Then for any $\tau' \ge \tau$, we have
\begin{equation}\label{0-6-2}
    \left(\sum_{j=1}^m \frac{1}{(1+\lambda|y-x_j|)^{\frac{N}{p+1}+\tau'}}\right)^{p} \le C\sum_{j=1}^m \frac{ 1 }{ (1+\lambda |y-x_j| )^{  \frac{N}{q+1}+2+\tau' } }
\end{equation}
and
\[
    \left(\sum_{j=1}^m \frac{1}{(1+\lambda|y-x_j|)^{\frac{N}{q+1}+\tau'}}\right)^{q} \le C\sum_{j=1}^m \frac{ 1 }{ (1+\lambda |y-x_j| )^{  \frac{N}{p+1}+2+\tau' } }.
\]
\end{lemma}

\begin{lemma}\label{B7}
    It holds that
    \begin{equation*}
        \begin{aligned}
            \left| \left\langle -\Delta Z_{1,h}  - q (Y_{\bar{r}, \bar{y}'',\lambda})^{q-1}Y_{1,h}, \phi \right\rangle\right|=O\left(\frac{\lambda^{n_h}||(\phi,\psi)||_{*}}{\lambda^{1+\varepsilon}} \right),
        \end{aligned}
    \end{equation*}
    and
    \begin{equation*}
    \begin{aligned}
        \left|\left\langle -\Delta Y_{1,h}  - p (Z_{\bar{r}, \bar{y}'',\lambda})^{p-1}Z_{1,h}, \psi \right\rangle \right|=O\left( \frac{\lambda^{n_h}||(\phi,\psi)||_{*}}{\lambda^{\varepsilon}} \right),
    \end{aligned}
\end{equation*}
where $(\phi,\psi)$ is the same as in \eqref{estimate_1}.
\end{lemma}
\begin{proof}From the definition of $Y_{1,h}$, $Z_{1,h}$ and $Y_{\bar{r},\bar{y}'',\lambda}$, we have
    \begin{equation}\label{cZ}
    \begin{aligned}
         &\quad\left\langle -\Delta Z_{1,h}  - q (Y_{\bar{r}, \bar{y}'',\lambda})^{q-1}Y_{1,h}, \phi \right\rangle \\
        &= \int_{\R^N} \phi(y) \left( - \Delta \dfrac{\partial (\xi V_{x_1,\lambda})}{\partial \Box_h} + \Delta \dfrac{\partial (V_{x_1,\lambda})}{\partial \Box_h} \right) \ \dy\\
        & \quad+ \int_{\R^N} \phi(y) \left(  q (Y_{\bar{r}, \bar{y}'',\lambda}^*)^{q-1} \dfrac{\partial U_{x_1,\lambda}}{\partial \Box_h} - q ( Y_{\bar{r}, \bar{y}'',\lambda} )^{q-1}\dfrac{\partial (\xi U_{x_1,\lambda})}{\partial \Box_h} \right) \ \dy \\
        & \quad+ \left( \int_{S} + \int_{\Omega_1 \backslash S} + \int_{\R^N \backslash \Omega_1} \right)\phi(y) \left( - \Delta \dfrac{\partial (V_{x_1,\lambda})}{\partial \Box_h} - q (Y_{\bar{r}, \bar{y}'',\lambda}^*)^{q-1} \dfrac{\partial U_{x_1,\lambda}}{\partial \Box_h} \right) \ \dy \\
        & : = \bar{I}_1+\bar{I}_2+\bar{I}_3 + \bar{I}_4 + \bar{I}_5.
    \end{aligned}
    \end{equation}

\textbf{Estimate $\bar{I}_1$}. Recall that $A = \{(r,y'')| \ \delta <|(r,y'') - (r_0, y_0'') | \leq 2 \delta\}$, we have
\begin{equation*}
\begin{aligned}
    \left| - \Delta \dfrac{\partial (\xi V_{x_1,\lambda})}{\partial \Box_h} + \Delta \dfrac{\partial (V_{x_1,\lambda})}{\partial \Box_h} \right| & = \left| -\Delta \xi \cdot \dfrac{\partial V_{x_1,\lambda}}{\partial \Box_h} - 2 \nabla \xi \cdot \nabla \dfrac{\partial V_{x_1,\lambda}}{\partial \Box_h} + (1-\xi) \dfrac{\partial (\Delta  V_{x_1,\lambda})}{\partial \Box_h} \right| \\
    & \leq C \chi_{A}\lambda^{n_h}(V_{x_1,\lambda} +  |\nabla V_{x_1,\lambda}|) + C \lambda^{n_h}(1-\xi)U_{x_1,\lambda}^q \\
    & \leq C  \dfrac{\chi_A\lambda^{n_h+\frac{N}{p+1}}}{(1+\lambda|y-x_1|)^{N-2}} + C (1- \xi) \dfrac{\lambda^{n_h + \frac{Nq}{q+1}}}{(1+\lambda|y-x_1|)^{(N-2)q}},
\end{aligned}
\end{equation*}
where $\chi_A$ is the characteristic function in $A$. Recall that $\textbf{B}_{\delta}= \{y=(r,y''):|(r,y'') - (r_0,y_0'')| < \delta\}$, we have
\begin{equation}\label{bar_I_1}
\begin{split}
    |\bar{I}_1| & \leq C \int_{\R^N} |\phi(y)| \left( \chi_A \dfrac{\lambda^{n_h +\frac{N}{p+1}}}{(1+\lambda|y-x_1|)^{N-2}} + C (1- \xi) \dfrac{\lambda^{n_h + \frac{Nq}{q+1}}}{(1+\lambda|y-x_1|)^{(N-2)q}}  \right)  \ \dy\\
    & \leq C ||\phi||_{*,1} \int_{A} \dfrac{\lambda^{n_h +\frac{N}{p+1}}}{(1+\lambda|y-x_1|)^{N-2}} \left( \sum\limits_{j=1}^m \dfrac{\lambda^{\frac{N}{q+1}}}{(1+\lambda|y-x_j|)^{\frac{N}{q+1}+\tau}} \right)  \ \dy\\
    & \quad + C ||\phi||_{*,1} \int_{\textbf{B}_{\delta}^c} \dfrac{\lambda^{n_h + \frac{Nq}{q+1}}}{(1+\lambda|y-x_1|)^{(N-2)q}}  \left( \sum\limits_{j=1}^m \dfrac{\lambda^{\frac{N}{q+1}}}{(1+\lambda|y-x_j|)^{\frac{N}{q+1}+\tau}} \right)  \ \dy \\
    & \leq \dfrac{C \lambda^{n_h}||\phi||_{*,1} }{\lambda^{\frac{N}{q+1}}}=O\left(\dfrac{ \lambda^{n_h}||\phi||_{*,1} }{\lambda^{1+\varepsilon}} \right) .
\end{split}
\end{equation}

\textbf{Estimate $\bar{I}_2$}. From Lemma \ref{l1-23-4} we can compute that
\begin{equation}\label{bar_I_2}
    \begin{aligned}
        |\bar{I}_2|  &\leq C\lambda^{n_h}\int_{\textbf{B}_{\delta}^c} \phi(y)(Y_{\bar{r}, \bar{y}'',\lambda}^*)^{q-1}U_{x_1,\lambda} \ \dy\\
        &\leq C\lambda^{n_h - \varepsilon}||\phi||_{*,1}\int_{\textbf{B}_{\delta}^c} \frac{\lambda^N}{(1+\lambda|y-x_1|)^{N-2}}
        \left(\sum\limits_{j=1}^m \dfrac{1}{(1+\lambda|y-x_j|)^{\frac{N}{q+1}+\tau}}\right)^{q}  \ \dy\\
        & =O\left(\dfrac{ \lambda^{n_h}||\phi||_{*,1} }{\lambda^{1+\varepsilon}} \right).
    \end{aligned}
\end{equation}

\textbf{Estimate $\bar{I}_3$}. From \ref{nnl2-19-1}, we have
\[
    \sum_{j=2}^m U_{x_j,\lambda} + \varphi \leq C \min \left\{ U_{x_1,\lambda}, \frac{m^{N-2}}{\lambda^{N-2}} \right\}, \ \ \text{in} \ \ S.
\]
Therefore,

\begin{equation}\label{bar_I_3}
    \begin{aligned}
        |\bar{I}_3| &= \int_{S} \phi(y) \left( qU_{x_1,\lambda}^{q-1} \dfrac{\partial U_{x_1,\lambda}}{\partial \Box_h}- q (Y_{\bar{r}, \bar{y}'',\lambda}^*)^{q-1} \dfrac{\partial U_{x_1,\lambda}}{\partial \Box_h}\right) \ \dy \\
        & \leq C\lambda^{n_h}\int_{S}\phi(y)U_{x_1,\lambda}^{q-1}\left(\sum\limits_{j=2}^m U_{x_j,\lambda}+\varphi\right) \ \dy\\
        &\leq C\lambda^{n_h} ||\phi||_{*,1}\left(\frac{m}{\lambda}\right)^{N-2}\int_{B_{C\lambda/m}(0)}\frac{1}{(1+|z|)^{(N-2)(q-1)+\frac{N}{q+1}}} \ \dy \\
        & = C\lambda^{n_h}||\phi||_{*,1} \left( \frac{m}{\lambda} \right)^{(N-2)(q-1)+\frac{N}{q+1} -2} = O\left(\dfrac{ \lambda^{n_h}||\phi||_{*,1} } {\lambda^{1+\varepsilon}}\right),
    \end{aligned}
\end{equation}
where the final inequality holds due to $(N-2)(q-1)+\frac{N}{q+1}-2 = \frac{qN}{p+1} > \frac{N+2}{2}$.

\textbf{Estimate $\bar{I}_4$}. Using Lemma \ref{lemma:U}, we have
\begin{equation*}
    \begin{aligned}
        |\bar{I}_4 |& \leq C \lambda^{n_h}  \int_{\Omega_1\backslash S} |\phi(y)|(Y_{\bar{r}, \bar{y}'',\lambda}^*)^{q-1}U_{x_1,\lambda} \ \dy\\
        &\leq C\lambda^{n_h}||\phi||_{*,1} \int_{\Omega_1\backslash S} \frac{\lambda^N}{(1+\lambda|y-x_1|)^{N-2}}\sum\limits_{j=1}^m\frac{1}{(1+\lambda|y-x_j|)^{\frac{N}{q+1}+\tau}}\\
        &\quad\times\left( \left(\sum_{j=1}^m \frac{1}{(1+\lambda|y-x_j|)^{N-2}} \right)^{q-1}+ \left( \left(\frac{m}{\lambda}\right)^{p(N-2)-2} \sum_{j=1}^m \frac{1}{(1+m|y-x_j|)^{\min\{N-2,p(N-3-\theta)-2\}}}\right)^{q-1}\right) \ \dy\\
        &:=\bar{J}_1+\bar{J}_2.
    \end{aligned}
\end{equation*}

Thus, from \eqref{I_32_2} we can obtain
\begin{equation}\label{I_32_J_1}
    \begin{aligned}
        \bar{J}_1&\leq C\lambda^{n_h}||\phi||_{*,1}\int_{\Omega_1\backslash S}\frac{\lambda^N}{(1+\lambda|y-x_1|)^{N-2+\frac{N}{q+1}+(N-2-\tau)(q-1)}} \ \dy\\
        &\leq C \lambda^{n_h}||\phi||_{*,1}\cdot \left(\frac{m}{\lambda} \right)^{\frac{N}{q+1} -2 + (N-2-\tau)(q-1)}=O\left(\frac{\lambda^{n_h}||\phi||_{*,1}}{\lambda^{1+\varepsilon}}\right),
    \end{aligned}
\end{equation}
since $\frac{N}{q+1} -2 + (N-2-\tau)(q-1)=\frac{qN}{p+1}-\tau(q-1)>\frac{N-2}{2}$ when $p\in (\frac{N}{N-2},\frac{N+2}{N-2}]$.

Next, we estimate $\bar{J}_2$. If $p(N-3)>N$, then $\bar{J}_2$ is bounded by $\bar{J}_1$. We next estimate $\bar{J}_2$ for the case $p(N-3)\leq N$.
Similar to \eqref{I_32_J_1}, we have
\begin{equation}\label{I_32_J_2}
    \begin{aligned}
        \bar{J}_2&\leq C\lambda^{n_h}||\phi||_{*,1} \int_{\Omega_1\backslash S} \frac{\lambda^N}{(1+\lambda|y-x_1|)^{N-2+\frac{N}{q+1}}}\left( \left( \frac{m}{\lambda} \right)^{p(1+\theta)} \sum_{i=1}^m \frac{1}{(1+\lambda|y-x_i|)^{p(N-3-\theta)-2}}\right)^{q-1} \ \dy\\
        &\leq C\lambda^{n_h}||\phi||_{*,1} \left(\frac{m}{\lambda}\right)^{p(q-1)(1+\theta)} \int_{\Omega_1\backslash S}\frac{\lambda^N}{(1+\lambda|y-x_1|)^{N-2+\frac{N}{q+1}+(q-1)(p(N-3-\theta)-2-\tau)}} \ \dy\\
        &\leq C\lambda^{n_h}||\phi||_{*,1}\left(\frac{m}{\lambda}\right)^{-2+\frac{N}{q+1}+(q-1)(p(N-2)-2-\tau)}=O\left(\frac{\lambda^{n_h}||\phi||_{*,1}}{\lambda^{1+\varepsilon}}\right),
    \end{aligned}
\end{equation}
where we have used $-2+\frac{N}{q+1}+(q-1)(p(N-2)-2-\tau)>\frac{N-2}{2}$ if $p\in (\frac{N}{N-2},\frac{N}{N-3}]$. Combine \eqref{I_32_J_1} and \eqref{I_32_J_2}, we have
\begin{equation}\label{bar_I_4}
    |\bar{I}_4| \leq \bar{J}_1 +\bar{J}_2 = O\left(\frac{\lambda^{n_h}||\phi||_{*,1}}{\lambda^{1+\varepsilon}}\right).
\end{equation}

\textbf{Estimate $\bar{I}_5$.} Split $\R^N \backslash \Omega_1$ into $\cup_{i=2}^m \Omega_i$, and choose two constants $\sigma_1,\sigma_2$ such that
\[
    \begin{gathered}
        \frac{N-2}{2} < \sigma_1 < \beta_1:= \frac{N}{q+1}+(q-1)(N-2-\tau)-2 , \\
        \frac{N-2}{2} - p(q-1)(1+\theta) < \sigma_2 < \beta_2 := \frac{N}{q+1}+(q-1)(p(N-3-\theta)-2-\tau)-2.
    \end{gathered}
\]
Then similar to the estimate of $\bar{I}_4$, we have
\begin{equation*}
    \begin{aligned}
        | \bar{I}_5| & \leq C\lambda^{n_h}\sum\limits_{i=2}^m\int_{\Omega_i} |\phi(y)|\cdot (Y_{\bar{r}, \bar{y}'',\lambda}^*)^{q-1} U_{x_1,\lambda} \ \dy\\
        &\leq C\lambda^{n_h}||\phi||_{*,1}\sum\limits_{i=2}^m\int_{\Omega_i}
        \frac{\lambda^N}{(1+\lambda|y-x_1|)^{N-2}}\left( \frac{1}{(1+\lambda|y-x_i|)^{\beta_1 + 2}} \right) \ \dy \\
        & \quad + C\lambda^{n_h}||\phi||_{*,1}\sum\limits_{i=2}^m\int_{\Omega_i}
        \frac{\lambda^N}{(1+\lambda|y-x_1|)^{N-2}} \left( \left(\frac{m}{\lambda} \right)^{p(q-1)(1+\theta)}\frac{\chi_{\{p(N-3)\leq N\}}}{(1+\lambda|y-x_i|)^{\beta_2 + 2 }}\right) \ \dy\\
        &\leq C\lambda^{n_h}||\phi||_{*,1}\sum\limits_{i=2}^m
        \frac{1}{(\lambda|x_1-x_i|)^{\sigma_1}} \int_{\Omega_i} \frac{\lambda^N}{(1+\lambda|y-x_i|)^{\beta_1+N - \sigma_1}} \ \dy\\
        & \quad + C\lambda^{n_h}||\phi||_{*,1} \left(\frac{m}{\lambda}\right)^{p(q-1)(1+\theta)}\sum\limits_{i=2}^m
        \frac{\chi_{\{p(N-3)\leq N\}}}{(\lambda|x_1-x_i|)^{\sigma_2}} \int_{\Omega_i} \frac{\lambda^N}{(1+\lambda|y-x_i|)^{\beta_2+N - \sigma_2}} \ \dy\\
        & = C\lambda^{n_h}||\phi||_{*,1} \left( \left(\frac{m}{\lambda} \right)^{\sigma_1} + \chi_{\{p(N-3)\leq N\}}\left(\frac{m}{\lambda} \right)^{p(q-1)(1+\theta) + \sigma} \right) \\
        &=O\left(\frac{\lambda^{n_h}||\phi||_{*,1}}{\lambda^{1+\varepsilon}}\right) .
    \end{aligned}
\end{equation*}
    Thus, we prove
    \begin{align}\label{bar_I_5}
        |\bar{I}_5|=O\left(\dfrac{ \lambda^{n_h}||\phi||_{*,1} }{\lambda^{1+\varepsilon}} \right) .
    \end{align}
Combine \eqref{cZ} \eqref{bar_I_1} \eqref{bar_I_2}, \eqref{bar_I_3}, \eqref{bar_I_4} and \eqref{bar_I_5}, we have
\begin{equation}\label{c-linearize1}
    \begin{split}
        \left| \left\langle -\Delta Z_{1,h}  - q (Y_{\bar{r}, \bar{y}'',\lambda})^{q-1}Y_{1,h}, \phi \right\rangle \right| = O\left(\dfrac{ \lambda^{n_h}||\phi||_{*,1} }{\lambda^{1+\varepsilon}}\right) .
\end{split}
\end{equation}
Similarly, we can prove that if $N\geq 5$, $p\in (\frac{N}{N-2},\frac{N+2}{N-2}]$, then
\begin{equation}\label{c-linearize2}
    \begin{split}
        \left| \left\langle -\Delta Y_{1,h}  - p (Z_{\bar{r}, \bar{y}'',\lambda})^{p-1}Z_{1,h}, \psi \right\rangle \right| = O\left(\dfrac{ \lambda^{n_h}||\psi||_{*,1} }{\lambda^{\varepsilon}}\right) .
\end{split}
\end{equation}
Note that the estimate for the left-hand side of \eqref{c-linearize2} could be improved to $O\left(\dfrac{ \lambda^{n_h}||\psi||_{*,1} }{\lambda^{1+\varepsilon}}\right)$ by imposing suitable conditions on $p$, such as $p\in (\frac{N+1}{N-2},\frac{N+2}{N-2}]$, $N\geq 5$.

Combine \eqref{c-linearize1} and \eqref{c-linearize2}, we can get the desired result.

\end{proof}

\section*{Acknowledgments}
Yuxia Guo was supported by the National Natural Science Foundation of China(No. 12271283) and  National Key R\&D Program (2023YFA1010002).

\section*{Statements and Declarations}

The authors confirm that there are no relevant financial or non-financial competing interests to report.
	
\section*{Data Availability Statements}

All data generated or analyzed during this study are included in this article.

\end{document}